\def \thus{:\;\;}
\def\eps{{\varepsilon}}
\def\Card{{\rm Card}}
\def\Prob{{\mathbb{P}}}
\def\Vol{{\rm Vol}}
\def\EXP{{\mathbb{E}}}
\def\BAN{\mathbb{B}}
\def\Tor{\mathbb{T}}
\def\reals{\mathbb{R}}
\def\integers{\mathbb{Z}}
\def\ba{\mathbf{a}}
\def\bn{\mathbf{n}}
\def\brH{{\bar H}}
\def\brL{{\bar L}}
\def\brQ{{\bar Q}}
\def\brY{{\bar Y}}
\def\brn{{\bar n}}
\def\brt{{\bar t}}
\def\brx{{\bar x}}
\def\brrho{{\bar\rho}}
\def\cC{\mathcal{C}}
\def\cI{\mathcal{I}}
\def\cJ{\mathcal{J}}
\def\cF{\mathcal{F}}
\def\cG{\mathcal{G}}
\def\cE{\mathcal{E}}
\def\cK{\mathcal{K}}
\def\cP{\mathcal{P}}
\def\cQ{\mathcal{Q}}
\def\cT{\mathcal{T}}
\def\cY{\mathcal{Y}}
\def\fP{\mathfrak{P}}
\def\fg{\mathfrak{g}}
\def\fp{\mathfrak{p}}
\def\hB{{\hat B}}
\def\hD{{\hat D}}
\def\hS{{\hat S}}
\def\hg{{\hat g}}
\def\hdelta{{\hat\delta}}
\def\tC{{\tilde C}}
\def\tH{{\tilde H}}
\def\tS{{\tilde S}}
\def\ty{{\tilde y}}
\def\tnu{{\tilde\nu}}
\theoremstyle{definition}
\newtheorem{definition}{Definition}[section]
\newtheorem{theorem}[definition]{Theorem}
\newtheorem{lemma}[definition]{Lemma}
\newtheorem{proposition}[definition]{Proposition}
\newtheorem{corollary}[definition]{Corollary}
\newtheorem{remark}[definition]{Remark}
\newtheorem{example}[definition]{Example}
\numberwithin{equation}{section}
\definecolor{OliveGreen}{rgb}{0,0.6,0}
\def\one{{\mathbbm{1}}}
\def\DS{\displaystyle}
\newcommand{\N}{{\mathbb{N}}}
\author{D. Dolgopyat, C. Dong, A. Kanigowski, and P. N\'andori}
\title{Mixing properties of generalized $T, T^{-1}$ transformations}
\begin{document}
\maketitle
\begin{abstract}
We study mixing properties of generalized $T, T^{-1}$ transformations. We discuss two mixing mechanisms.
In the case the fiber dynamics is mixing, it is sufficient that the driving cocycle is small with small probability.
In the case the fiber dynamics is only assumed to be ergodic, one needs to use the shearing properties
of the cocycle. Applications include the Central Limit Theorem for sufficiently fast mixing systems
and the estimates on deviations of ergodic averages.
\end{abstract}
\tableofcontents

\section{Introduction}
An important discovery made in the last century is that deterministic systems can exhibit chaotic behavior.
Currently there are many examples of systems exhibiting a full array of chaotic properties including 
Bernoulli property, exponential decay of correlations and central limit theorem (see e.g. 
\cite{BDV05, Bow75, CM06, You98}). 
Systems which satisfy only some of the above properties are less
understood. In fact, it is desirable to have more examples of such systems in order to understand 
the full range of possible behaviors of partially chaotic systems.

Generalized $T, T^{-1}$ transformations are a rich source of examples in probability and ergodic theory.
In fact, they were used to exhibit examples of systems with unusual limit laws \cite{KS79, CC17},
central limit theorem with non standard normalization \cite{Bolt89}, K but 
non Bernoulli systems
in abstract \cite{Kal82} and smooth setting in various dimensions \cite{Kat80, Rud88, KRHV},
very weak Bernoulli but not weak Bernoulli partitions \cite{dHKSS},
slowly mixing systems \cite{dHS97, LB06}, systems with multiple Gibbs measures \cite{EL04, MN79}.

A comprehensive survey of probabilistic version of $T, T^{-1}$ transformations, which is a random walk in random scenery,
is contained in \cite{dHS06}. On the other hand, there are no works addressing how statistical properties of
$T, T^{-1}$ transformations depend on the properties of the base and the fiber dynamics. Our paper provides a first step in this direction by investigating
mixing properties of $T, T^{-1}$ transformations.

Let us explain what we mean by smooth $T, T^{-1}$ transformations.
Let $X, Y$ be compact manifolds, $f:X\to X$ be a smooth map preserving 
a measure $\mu$ and $G_t: Y\to Y$ be a $d$ parameter
flow on $Y$ preserving a measure $\nu.$ Let $\tau: X\to\reals^d$ be a smooth map.
We study the following map $F: (X\times Y)\to (X\times Y)$
$$ F(x,y)=(f(x), G_{\tau(x)}y). $$
Note that $F$ preserves the measure $\zeta=\mu\times \nu$ and that 
$$ F^N(x,y)=(f^N x, G_{\tau_N(x)} y)\quad\text{where}\quad 
\tau_N(x)=\sum_{n=0}^{N-1} \tau(f^n x). $$

Clearly both mixing of $f$ and ergodicity of $G$ are necessary for $F$ to be mixing. Under these assumptions there are two mechanisms
for $F$ to be mixing.

(1) If $G$ itself is mixing then it is enough to ensure that $\tau_N$ does not take small values with large probability
(cf. \cite{dHS97, LB06}).

(2) On the other hand if we only assume that $G$ is ergodic then we need to rely on shearing properties of $\tau$ to ensure
that $\tau_N$ is uniformly distributed in boxes of size 1. This can be done by assuming various extension of the Central Limit Theorem
(cf. \cite{Br05, DLN19}).

Abstract results detailing sufficient conditions for each of the two mechanisms
described above  are presented in Section \ref{ScMixAbs}.
Estimates on the rates of mixing of $F$ under the assumption that $G$ is mixing are given in Section \ref{ScMix-MF}. 
In Section \ref{ScCLT}, we prove the Central Limit Theorem
in case $F$ mixes sufficiently quickly.
Section \ref{ScMix-EF}
contains mixing estimates in case
$G$ is only assumed to be ergodic (however, we need much stronger assumptions on the base map $f$). 
The results presented in Sections
\ref{ScMix-MF}--\ref{ScMix-EF} rely on preliminary facts contained in
Section \ref{ScBack}. In Section \ref{ScToral}, we discuss 
several examples which
require a combination of ideas from Sections \ref{ScMix-MF}
and \ref{ScMix-EF}. 
Section \ref{ScDeviations} presents application of our mixing results to deviations of ergodic averages
and also contains a survey of
examples of systems satisfying various assumptions required in our results. 
We will have some strong assumptions that
are sometimes non-trivial to check. In the appendix,
we check one of our assumptions for an important example, namely the 
anticoncentration large deviation bounds for subshifts
of finite type. This result
may be interesting outside of the scope of the present work.

We also mention that in a followup  paper \cite{DDKN-Flex}
we provide a description of
further statistical properties of the generalized $T, T^{-1}$ transformation, using the mixing
bounds obtained in the present paper.
\\

{\bf Acknowledgements:} D.\ D.\ was partially supported by the NSF grant
DMS-1956049,
A.\ K.\ was partially supported by the NSF grant DMS-1956310,
P.\ N.\ was partially supported by the NSF grants DMS-1800811 and DMS-1952876.


\section{Local Limit Theorem and Mixing}
\label{ScMixAbs}
For 
a function $A\in L^1(X,\mu)$ we denote $\mu(A(\cdot)):=\int_{X}A(x)\,d\mu$.

\begin{definition}
\label{DfnMixLLT}  
$\tau$ satisfies {\em mixing LLT} if there exist sequences 
$(L_n)_{n\in \N}\subset \reals,$ $(D_n)_{n\in \N}\subset \reals^d$ 
and a bounded probability density $\fp$  on $\reals^d$ such that
for any sequence $(\delta_n)_{n \in \mathbb N} \subset \reals$, with
$\displaystyle \lim_{n\to\infty} \delta_n = 0$,
$(z_n)_{n\in \N}\subset \reals^d$ such that 
$ |\frac{z_n}{L_n}- z| < \delta_n$ for any cube $\cC\subset \reals^d$ and any continuous functions $A_0, A_1:X\to\reals$,
$$ \lim_{n\to\infty} L_n^d \mu\Big(A_0(\cdot) A_1(f^n\cdot) \one_\cC(\tau_n-D_n-z_n)\Big)=
\fp(z) \mu(A_0) \mu(A_1) \Vol(\cC),$$
and the convergence is uniform  once 
$(\delta_n)_{n \in \mathbb N}$ is fixed and 
$A_0, A_1, z$ range over compact subsets of $C(X), C(X)$ and $\reals^d$ respectively.
\end{definition}

\begin{definition}
We say that, $\tau$ satisfies {\em mixing multiple LLT} if for each $m\in \N$, any sequence $(\delta_n)_{n \in \mathbb N} \subset \reals$ with
$\displaystyle \lim_{n\to\infty} \delta_n = 0$, and any
family of sequences $(z_n^{(1)}, \dots, z_n^{(m)})_{n\in \N}$ with
$ |\frac{z_n^{(j)}}{L_n} - z^{(j)}| < \delta_n$,  any cubes $\{\cC_j\}_{j\leq m}\subset \reals^d$ and continuous functions $A_0, \dots ,A_m:X\to\reals$, for any sequences $n_k^{(1)}, \dots, n_k^{(m)}\in \N$
 such that $n^{(j)}_{k}-n^{(j-1)}_{k}\geq \delta_k^{-1}
$ (with $n^{(0)}_k=0$),
$$ \lim_{k\to\infty} \left(\prod_{j=1}^m L^d_{n^{(j)}_{k}-n^{(j-1)}_{k}}\right) 
\mu\left(\prod_{j=0}^m A_j
\left(f^{n^{(j)}_{k}} \cdot \right) 
\prod_{j=1}^m \one_{\cC_j}\left(\tau_{n^{(j)}_{k}}-D_{n^{(j)}_{k}}-z_{n^{(j)}_{k}}^{(j)}\right)\right)$$
$$=
\prod_{j=0}^m \mu(A_j) \prod_{j=1}^m \fp\left(z^{(j)}-z^{(j-1)}\right) \prod_{j=1}^m \Vol(\cC_j)
$$
where $z^{(0)}=0.$ Moreover, the convergence is uniform  once 
$(\delta_n)_{n \in \mathbb N}$ is
fixed, 
 $A_0,\dots, A_m$ range over compact subsets of $C(X)$ and $z^{(j)}$ range over 
a compact subset of $\reals^d$ for every $j \leq  m$.
\end{definition}

\begin{remark}
\label{remark:drift}
We note that $\tau$ is bounded and consequently $\tau_n/n$
is bounded, too. Thus if the mixing 
LLT holds, then $L_n < Cn$. We assume that $D_n = n \mu(\tau)$. In case $\mu(\tau) = 0$, we say
that $\tau$ has zero drift. 
\end{remark}

\begin{remark}
By Portmanteau theorem on vague convergence, the mixing LLT is equivalent to saying that
for all continuous functions $A_0, A_1: X\to \reals$ for any 
compactly supported almost everywhere continuous
function $\phi:\reals^d\to \reals$  for any sequence $z_N$ such that 
$| \frac{z_N}{L_N}- z| < \delta_n$, we have
\begin{equation}
\label{EqPortmanteau}
 \lim_{n\to\infty} L_n^d \mu\Big(A_0(\cdot) A_1(f^n\cdot) \phi(\tau_n-D_n-z_n)\Big)=
\fp(z) \mu(A_0) \mu(A_1) \int_{\reals^d} \phi(t) dt 
\end{equation}
and the convergence is uniform if
$A_0, A_1$ range over compact subsets of $C(X)$ and 
 $z$ ranges over a compact subset of $\reals^d.$
 A similar remark applies to the multiple mixing LLT.
\end{remark}

\begin{theorem}
\label{ThLLTMix}
Suppose that $(G_t)$ is ergodic.

(a) If $\tau$ satisfies the mixing LLT then $F$ is mixing.

(b) If $\tau$ satisfies the mixing multiple LLT then $F$ is multiple mixing.
\end{theorem}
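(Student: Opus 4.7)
By density of product functions in $L^2(X \times Y, \zeta)$, mixing of $F$ is equivalent to showing, for all $A_i \in C(X)$, $B_i \in C(Y)$, that
\begin{equation*}
\mathcal{I}_n := \int A_0(x)\, B_0(y)\, A_1(f^n x)\, B_1\!\left(G_{\tau_n(x)} y\right) d\mu\, d\nu \;\longrightarrow\; \mu(A_0)\mu(A_1)\nu(B_0)\nu(B_1).
\end{equation*}
The plan has two steps: \emph{(i)} use the mixing LLT to replace the $x$-integral against $B_1(G_{\tau_n(x)} y)$ by an integral against the asymptotic density $\fp$ of $(\tau_n - D_n)/L_n$; \emph{(ii)} use ergodicity of $(G_t)$ to collapse the resulting $y$-integral to $\nu(B_0)\nu(B_1)$.

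\textbf{Step 1 (LLT).} Fix large $R$ and small $\eps > 0$, and partition $D_n + L_n[-R,R]^d$ into cubes $K_k := D_n + \eps k + [0,\eps)^d$ indexed by $k \in \integers^d$ with $z_k := \eps k/L_n \in [-R,R]^d$. On each $K_k$, approximate $B_1(G_{\tau_n(x)} y) \approx B_1(G_{D_n + \eps k}\, y)$, the error being bounded by the modulus of continuity of $t \mapsto B_1(G_t y)$ at scale $\eps$ (uniformly in $y$ after replacing $B_1$ by a $C^1$ approximant). The mixing LLT, applied uniformly in $z_k \in [-R,R]^d$, gives $L_n^d\, \mu(A_0\, A_1(f^n\cdot)\, \one_{K_k}(\tau_n)) \to \mu(A_0)\mu(A_1)\, \fp(z_k)\, \eps^d$. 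Summing the resulting Riemann sum and letting $n \to \infty$ yields
\begin{equation*}
\int A_0\, A_1(f^n \cdot)\, B_1(G_{\tau_n} y)\, \one_{\{|\tau_n - D_n| \le R L_n\}}\, d\mu \;\longrightarrow\; \mu(A_0)\mu(A_1) \int_{[-R,R]^d} \fp(u)\, B_1(G_{D_n + L_n u}\, y)\, du,
\end{equation*}
uniformly in $y$; the tail $\{|\tau_n - D_n| > R L_n\}$ contributes at most $\|B_0\|_\infty \|B_1\|_\infty \mu(A_0)\mu(A_1)(1 - \int_{[-R,R]^d} \fp)$, which vanishes as $R \to \infty$, while the cube-approximation error vanishes as $\eps \to 0$.

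\textbf{Step 2 (Ergodicity).} Set $C(s) := \int B_0(y)\, B_1(G_s y)\, d\nu(y)$. Integrating Step 1 over $y$ reduces the problem to showing $\int_{\reals^d} \fp(u)\, C(D_n + L_n u)\, du \to \nu(B_0)\nu(B_1)$. By the spectral theorem for the unitary $\reals^d$-representation $(U_t B)(y) = B(G_t y)$, write $C(s) = \nu(B_0)\nu(B_1) + \int e^{i\xi\cdot s}\, d\tilde\sigma(\xi)$, where $\tilde\sigma$ is the finite complex spectral measure of the mean-zero components of $B_0, B_1$; ergodicity of $(G_t)$ implies $\tilde\sigma(\{0\}) = 0$. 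Hence
\begin{equation*}
\int \fp(u) \left[C(D_n + L_n u) - \nu(B_0)\nu(B_1)\right] du = \int e^{i\xi\cdot D_n}\, \hat{\fp}(-L_n \xi)\, d\tilde\sigma(\xi) \;\longrightarrow\; 0
\end{equation*}
by dominated convergence, since the Riemann--Lebesgue lemma gives $\hat{\fp}(-L_n\xi) \to 0$ for each $\xi \neq 0$, $|\hat{\fp}| \le \|\fp\|_1$, and $\tilde\sigma(\{0\}) = 0$. Since $\int \fp = 1$, part (a) follows.

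\textbf{Part (b) and main obstacle.} For (b) the same template applies: the mixing multiple LLT gives
\begin{equation*}
\mathcal{I}_n^{(m)} \approx \prod_{j=0}^m \mu(A_j) \int \prod_{j=1}^m \fp(u^{(j)})\, du^{(j)} \int B_0(y) \prod_{j=1}^m B_j\!\left(G_{s_j(u)}\, y\right) d\nu(y),
\end{equation*}
where $s_j(u) = D_{n^{(j)}} + \sum_{i \le j} L_{n^{(i)} - n^{(i-1)}}\, u^{(i)}$. Integrating first over $u^{(m)}$ (which enters only $s_m$, linearly, at scale $L_{n^{(m)} - n^{(m-1)}} \to \infty$), the Step 2 argument gives $\int \fp(u^{(m)})\, B_m(G_{s_m} y)\, du^{(m)} \to \nu(B_m)$ in $L^2(\nu)$, \emph{uniformly} in the remaining $u^{(i)}$ (the $L^2$-norm depends only on the difference $u^{(m)} - u^{(m)\prime}$ by $G$-invariance of $\nu$, so the rate is independent of the other parameters). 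Cauchy--Schwarz then peels $B_m$ off the product, and induction on $m$ delivers $\prod_j \mu(A_j) \prod_j \nu(B_j)$. The main technical obstacle is Step 1: since $B_1 \circ G_\cdot y$ is not compactly supported whereas the mixing LLT tests only compactly supported observables, one must glue together many local LLT applications at microscopic scale $\eps$ over a macroscopic region of scale $L_n \to \infty$; it is precisely the uniform-in-$z$ convergence built into the LLT hypothesis that makes this Riemann-sum gluing controllable as the number of cubes grows with $n$.
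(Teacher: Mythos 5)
Your proposal is correct, and it is genuinely different from the paper's proof in its second half. The first step (Riemann-sum discretization of $\tau_n$ into cubes of small fixed size over a macroscopic range of scale $L_n$, using the uniform-in-$z$ convergence of the mixing LLT to control the growing number of cubes) is essentially identical to the paper's, modulo cosmetic choices. The divergence is in how ergodicity of $(G_t)$ is exploited. The paper sets $\rho(t)=\int B_1(y)B_2(G_t y)\,d\nu(y)$, shows that the discretized LLT output $\alpha(0)$ is close to its own average $\frac{1}{\Vol(\bf B_R)}\int_{\bf B_R}\alpha(c)\,dc$ (which takes a delicate estimate involving the continuity of $\fp$ and $\rho$ together with $R/L_N\to 0$), and then invokes the mean ergodic theorem applied over $\bf B_R$. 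You instead write the mean-zero part of $\rho$ as $\int e^{i\xi\cdot s}\,d\tilde\sigma(\xi)$ via the spectral theorem for the $\reals^d$-representation, note that ergodicity is exactly $\tilde\sigma(\{0\})=0$, and conclude by Fubini, Riemann--Lebesgue applied to $\hat\fp(-L_n\xi)$, and dominated convergence. Your Step 2 thus replaces a hands-on averaging argument by a Wiener-lemma-type oscillation argument that needs only $\fp\in L^1$ and $B_i\in L^2$, avoiding the $\alpha(c)-\alpha(0)$ estimate entirely; the cost is invoking the spectral theorem for $d$-parameter unitary groups, which is heavier machinery than the mean ergodic theorem. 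Your sketch of part (b), peeling off the last factor in $L^2$ via $G$-invariance and Cauchy--Schwarz and then inducting on $m$, is a clean and valid route (the paper merely asserts that (b) is analogous and leaves it to the reader). Two minor points worth noting: both arguments need $L_n\to\infty$, which follows from the LLT hypothesis but is used implicitly in your Riemann--Lebesgue step; and the $C^1$ smoothing of $B_1$ you invoke is a convenience rather than a necessity (uniform continuity of $(t,y)\mapsto B_1(G_t y)$ on compacts suffices), but it is harmless.
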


\begin{proof}
(a)
For $i=1,2$, let $\Phi_i(x,y)=A_i(x)B_i(y)$ be a continuous function on $X\times Y$. Since linear combinations of products as above are dense in $L^2(\mu\times\nu),$
it suffices to show that for every $\epsilon>0$ there exists $N_0\in \mathbb N$ 
such that for every $N\geq N_0$, we have
\begin{equation}\label{e-0}
\Big|\int_{X\times Y} \Phi_1(x,y)\Phi_2(F^N(x,y))d(\mu\times\nu)-\mu(A_1)\mu(A_2)\nu(B_1)\nu(B_2)\Big|<\epsilon.
\end{equation}

Let $\rho(t):=\int_Y B_1(y)B_2(G_{t}y)d\nu(y)$. Note that \begin{equation}\label{e-1}\int_{X\times Y} \Phi_1(x,y)\Phi_2(F^N(x,y))d(\mu\times\nu)=\int_X A_1(x)A_2(f^N(x))\cdot\rho(\tau_N(x))d\mu(x).\end{equation}

Let $\delta=\delta(\epsilon)>0$ be small with respect to $\epsilon$, and $I_0\subset \mathbb R^d$ be a cube of volume $\delta^d$, centered at 0. Consider a (disjoint) cover of $\mathbb R^d $ by a union of small cubes 
$\{I_j\}$, where $I_j$ is a translation of $I_0$ and let  $t_j$ denote the center of $I_j$.
Now let ${\bf B_\ell}\subset \mathbb R^d$ be a ball centered at 0 with radius $\ell$, and denote $S_\ell:=\{j:I_j \cap\bf B_\ell\neq\emptyset\}$. 
By the mixing LLT (with $A_0=A_1=1$) it follows that  there exists $K=K(\epsilon)$ and $N_0'\in \N$ such that for every $N\geq N_0'$, 
$$
\mu\Big(\{x\in X:|\tau_N-D_N|>KL_N/2\}\Big)<\epsilon/2.
$$
Let $\hS_1:=S_{KL_N}$. Therefore (see \eqref{e-0} and \eqref{e-1}) it is enough to show that
\begin{equation}\label{e-1.5}
\Big|\sum_{j\in \hS_{1}}\int A_1(x)A_2(f^N(x))\cdot\rho(\tau_N(x))
\one_{I_j + D_N}(\tau_N(x))
d\mu(x)-
\mu(A_1)\mu(A_2)\nu(B_1)\nu(B_2)\Big|<\epsilon/2.
\end{equation}

If $\delta$ is small enough (using continuity of $(G_t)$), the above sum is, up to an error less than 
$\epsilon/16$, equal to
\begin{equation}\label{e-2}
\sum_{j\in \hS_{1}}\rho(D_N+t_j)\mu\Big(A_1(\cdot)A_2(f^N(\cdot))\one_{I_j}(\tau_N(\cdot)- D_N)\Big).
\end{equation}
 By the definition of mixing LLT  (with $A_1, A_2$, $\cC=I_0$ and $z=t_j$), and since the number of $j's$ such that $j\in \hat S_1$ is bounded above by $C(\delta,\epsilon)L_N^d$ there exists $N_1=N_1(\epsilon,\delta)\in \N$ such that for every $N\geq N_1$, the above expression is, up to an error less than  $\epsilon/16$, equal to  
 \begin{equation}\label{e-3}\sum_{j\in \hat S_{1}}\frac{1}{L_N^d}\Vol(I_0)
 \fp\left(\frac{t_j}{L_N}\right)\mu(A_1)\mu(A_2)\rho(D_N+t_j).\end{equation}
 
Enlarging  $K$ and $N$, if necessary, we can guarantee that 
\begin{equation}\label{to1}
\left|\sum_{j\in \hat S_{1}}\frac{1}{L_N^d}\Vol(I_j)\fp\left(\frac{t_j}{L_N}\right)-1\right|
<\frac{\epsilon}{16}.
\end{equation}

Now, fix $R>0$ and for $c\in {\bf B}_R$, let 
$$\alpha(c):=\sum_{j\in \hat S_{1}}\frac{1}{L_N^d}\Vol(I_0)\fp\left(\frac{t_j}{L_N}\right)\rho(D_N+t_j+c).$$ 
We claim that there exists $N_2=N_2(R)$ such that for $N\geq N_2$, we have 
$$
|\alpha(c)-\alpha(0)|<\epsilon/16.
$$
Indeed, let $k$ be such that $c\in I_k$, then $|t_k|\le 
R+1$ and $|t_k-c|\le \delta$, by choosing $\delta\ll\epsilon$ small enough, and $N_2$ large so that $\frac{R+1}{L_{N_2}}\le \delta$, we have
\begin{align}
&|\alpha(c)-\alpha(0)|\le |\alpha(c)-\alpha(t_k)|+|\alpha(t_k)-\alpha(0)|\nonumber \\&\le \frac{\Vol(I_0)}{L_N^d}\sum_{j\in \hat S_{1}}\fp(\frac{t_j}{L_N})|\rho(D_N+t_j+c)-\rho(D_N+t_j+t_k)|+\nonumber\\&\frac{\Vol(I_0)}{L_N^d}\sum_{j\in \hat S_{1}}\left|\fp(\frac{t_j}{L_N})-\fp(\frac{t_j-t_k}{L_N})\right||\rho(D_N+t_j)|+\nonumber\\&\frac{\Vol(I_0)}{L_N^d}\sum_{j\in \hat S_1:|t_j-t_k|\ge KL_N}\fp(\frac{t_j}{L_N})|\rho(D_N+t_j+t_k)|\nonumber\\& (\star)\le \; C_1(\fp,\rho)|t_k-c|+C_2(\fp,\rho,K)R/L_N+K^dC(\rho)R/L_N\nonumber\\&\le\epsilon/64+\epsilon/64+\epsilon/64<\epsilon/16,\nonumber
\end{align}
where for the inequality $(\star)$, the first term is due to the fact that $\rho$ is 
continuous on $t$ and \eqref{to1}, the second term is due to continuity of $\fp$ and the choice of $N_2$ (that is,
$\frac{R+1}{L_{N}}\le\delta$), and the last term contains a sum of $K^dRL_N^{d-1}$ many terms and hence $\le K^dC(\rho)R/L_N$.

Therefore 
\begin{equation}\label{e-3.5}\Big|\alpha(0)-\frac{1}{\Vol({\bf B}_R)}\int_{c\in {\bf B}_R}\alpha(c)dc\Big|<\epsilon/16.
\end{equation}
Now by the ergodicity of $G$ and the mean 
ergodic theorem for the $G$-action, there exist a subset $Y_0\subset Y$ with $\nu(Y_0)\ge 1-\frac{\epsilon}{32C_3^2}$ and $R_0>0$, such that for any $y\in Y_0$ and $R\ge R_0$, $$\left|\frac{1}{\Vol({\bf B}_R)}\int_{t\in {\bf B}_R}B_2(G_ty)dt-\nu(B_2)\right|<\frac{\epsilon}{32C_3}.$$Here the constant $C_3:=10\max_{y\in Y}\{|B_1(y)|,|B_2(y)|\}$. Hence for any $t$, if $R\ge R_0$,
\begin{align}\label{e-4}
&\left|\frac{1}{\Vol({\bf B}_R)}\int_{c\in {\bf B}_R}\rho(t+c)dc- \nu(B_1)\nu(B_2)\right|\\\le& \left|\int_{G_{-t}(Y_0)}B_1(y)\left(\frac{1}{\Vol({\bf B}_R)}\int_{c\in {\bf B}_R}B_2(G_{t+c}y)dc-\nu(B_2)\right)d\nu(y)\right|\nonumber \\&+\int_{Y\backslash G_{-t}(Y_0)}|B_1(y)|\left|\frac{1}{\Vol({\bf B}_R)}\int_{c\in {\bf B}_R}B_2(G_{t+c}y)dc-\nu(B_2)\right|d\nu(y)\nonumber \\\le& \max\{|B_1|\}\frac{\epsilon}{32C_3}+\max\{|B_1|\}\max\{|B_2|\}2(1-\nu(Y_0))\le\frac{\epsilon}{16}.\nonumber 
\end{align}

Note that \eqref{e-3} is equal to $\mu(A_1)\mu(A_2)\alpha(0)$. By \eqref{e-3.5} and \eqref{e-4}, $\mu(A_1)\mu(A_2)\alpha(0)$ is,  up to an error less than $\epsilon/8$, equal to 
$$
\mu(A_1)\mu(A_2)\nu(B_1)\nu(B_2)
\left[\sum_{j\in \hat S_{1}}\frac{1}{L_N^d}\Vol(I_j)\fp\left(\frac{t_j}{L_N}\right)\right].
$$

Combining the estimates \eqref{to1}, (\ref{e-2}) and (\ref{e-3}) we obtain (\ref{e-1.5}) (and consequently \eqref{e-0}), completing the proof.

(b) The proof is essentially the same as that for (a), therefore we leave it to the reader. 
\end{proof}

\section{Background}
\label{ScBack}

\begin{definition}
We say that $G$ is {\em mixing with rate $\psi(t)$ on a space $\BAN$} if 
\begin{equation}
\label{Eq2Mix}
 \left| \int B_1(y) B_2(G_t y) d\nu(y)-\nu(B_1)\nu(B_2) \right|\leq 
C \psi(t) \|B_1\|_\BAN \|B_2\|_\BAN. 
\end{equation} 
We call $G$ {\em exponentially mixing} if \eqref{Eq2Mix} holds with $\BAN=C^r$ for some $r>0$ and 
$\psi(t)=e^{-\delta\|t\|}$ for some $\delta>0.$

We call $G$ {\em polynomially mixing} if \eqref{Eq2Mix} holds with $\BAN=C^r$ for some $r>0$ and 
$\psi(t)=\|t\|^{-\delta}$ for some $\delta>0.$

We call $G$ {\em rapidly mixing} if for each $m$ there exists $r$ such that 
 \eqref{Eq2Mix} holds with $\BAN=C^r$ and $\psi(t)=\|t\|^{-m}.$ 

These definitions extend to maps (such as to $f$ and $F$) in the 
natural way.
\end{definition}

\begin{definition}
$\tau$ satisfies {\em exponential large deviation bounds}, 
if for each $\eps>0$ there exist $C$ and $\delta>0$ 
such that for any $N\in\N$,
\begin{equation}
\label{EqLD-Exp}
 \mu\left(
\left\|
\frac{\tau_N}{N}- \mu(\tau)
\right\|\geq \eps\right)\leq C e^{-\delta N}. 
\end{equation} 
\medskip
$\tau$ satisfies {\em polynomial large deviation bounds},
if for each $\eps>0$ there exist $C$ and $\delta>0$ 
such that  for any $N\in\N$,
$$ \mu\left(\left\|\frac{\tau_N}{N}- \mu(\tau)\right\|
\geq \eps\right)\leq C N^{-\delta}. $$
 $\tau$ satisfies {\em superpolynomial large deviation bounds}, 
if for each $w>0,$ $\eps>0$ there exist $C=C(\eps, w)$ 
such that  for any $N\in\N$,
$$ \mu\left(
\left\|\frac{\tau_N}{N}- \mu(\tau)\right\|
\geq \eps\right)\leq C N^{-w}. $$
\end{definition}

We will often use the following standard fact.
\begin{lemma}\label{lem:sob}
For each $r$, there is $w=w(r)$ such that functions $\Phi\in C^w(X\times Y)$ admit a decomposition
$\DS  \Phi(x,y)=\sum_{k=1}^\infty A_k(x) B_k(y)$, where $A_k\in C^r(X),$ $ B_k\in C^r(Y)$ 
and
\begin{equation}
\label{ProdDeco}
\sum_k \|A_k\|_{C^r(X)} \|B_k\|_{C^r(Y)} \leq C(r, w) \|\Phi\|_{C^w(X\times Y)}. 
\end{equation}
\end{lemma}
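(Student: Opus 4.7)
The plan is to reduce to a Fourier expansion on a product torus. By Whitney's embedding theorem, the compact manifolds $X$ and $Y$ embed smoothly as closed submanifolds of Euclidean spaces, and after rescaling their images lie inside fundamental domains of tori $\TOR^{d_1}$ and $\TOR^{d_2}$. Via the Whitney--Seeley extension theorem, applied on finitely many coordinate charts and glued by a smooth partition of unity, for each $w \in \naturals$ one obtains a bounded linear extension operator $E : C^w(X \times Y) \to C^w(\TOR^{d_1} \times \TOR^{d_2})$ with $\|E\Phi\|_{C^w} \leq C_w \|\Phi\|_{C^w(X\times Y)}$.

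Set $\tilde\Phi := E\Phi$ and expand it in the Fourier basis of the product torus,
$$\tilde\Phi(x,y) = \sum_{(k,l) \in \integers^{d_1+d_2}} c_{k,l}\, e_k(x)\, e_l(y), \qquad e_k(x) := e^{2\pi i k \cdot x}.$$
Repeated integration by parts against the exponentials yields the standard bound $|c_{k,l}| \leq C_w \|\Phi\|_{C^w(X \times Y)} (1+\|k\|+\|l\|)^{-w}$. Define $A_{k,l}(x) := c_{k,l}\, e_k(x)|_X$ and $B_{k,l}(y) := e_l(y)|_Y$, so that $\Phi(x,y) = \sum_{k,l} A_{k,l}(x) B_{k,l}(y)$ on $X \times Y$. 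Elementary differentiation of the exponentials gives $\|A_{k,l}\|_{C^r(X)} \leq C|c_{k,l}|(1+\|k\|)^r$ and $\|B_{k,l}\|_{C^r(Y)} \leq C(1+\|l\|)^r$, hence
$$\sum_{k,l} \|A_{k,l}\|_{C^r(X)} \|B_{k,l}\|_{C^r(Y)} \leq C_w \|\Phi\|_{C^w(X\times Y)} \sum_{(k,l)} \frac{(1+\|k\|)^r (1+\|l\|)^r}{(1+\|k\|+\|l\|)^w}.$$
The series on the right converges whenever $w > 2r + d_1 + d_2$, so any such $w = w(r)$ works, after relabeling the doubly-indexed family $\{A_{k,l}, B_{k,l}\}$ as $\{A_k, B_k\}$ (and splitting into real/imaginary parts if one insists on real-valued summands).

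The only non-trivial ingredient is the bounded extension operator $E$, which is classical. An equivalent route, bypassing embeddings, is to expand $\Phi$ directly in the $L^2$-eigenbasis of the Laplace--Beltrami operator on $X \times Y$ (with a product Riemannian metric): elliptic regularity controls the $C^r$-norms of the eigenfunctions by a polynomial in the eigenvalue, Weyl's law controls the eigenvalue counting function, and iterated integration against the Laplacian gives decay of the generalized Fourier coefficients in terms of $\|\Phi\|_{C^w}$. Either approach yields the same polynomial dependence of $w$ on $r$, $\dim X$, and $\dim Y$.
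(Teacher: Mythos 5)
Your proof is correct. The paper states Lemma \ref{lem:sob} as a ``standard fact'' and gives no argument, so there is nothing to compare against; your Fourier-after-embedding route (and the sketched alternative via the Laplace--Beltrami eigenbasis) is a clean, standard way to establish it. Two small remarks. First, the bounded extension operator is slightly lighter than you suggest: since $X\times Y$ sits in $\TOR^{d_1}\times\TOR^{d_2}$ as a compact embedded submanifold, you can take $E\Phi := \chi\cdot(\Phi\circ\pi)$, where $\pi$ is the nearest-point projection from a tubular neighborhood and $\chi$ a smooth cutoff supported there; the chain and Leibniz rules give $\|E\Phi\|_{C^w}\le C_w\|\Phi\|_{C^w(X\times Y)}$ with no appeal to the full Whitney--Seeley machinery. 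Second, your convergence condition $w>2r+d_1+d_2$ is correct as stated: the summand is comparable to $(1+\|k\|+\|l\|)^{2r-w}$ in the worst regime $\|k\|\asymp\|l\|$, and there are $O(n^{d_1+d_2-1})$ lattice points with $\|k\|+\|l\|\asymp n$, so the series behaves like $\sum_n n^{d_1+d_2-1+2r-w}$. Absorbing $c_{k,l}$ into $A_{k,l}$, restricting to $X\times Y$, and relabeling the absolutely convergent double series into a single series, with the real/imaginary split if one insists on real summands, completes the argument exactly as you say.
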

\begin{corollary}
\label{CrProductReduction}
Suppose that there are positive constants $K$ and $r$, such that
$$ \left|\iint A'(x) B'(y) A''(f^n x) B''(G_{\tau_n(x)} y) d\mu(x)\; d\nu(y)-\mu(A') \nu(B')\mu(A'') \nu(B'')\right|$$
\begin{equation}
\label{CorrProd}
\leq K \|A'\|_{C^r(X)}\|B'\|_{C^r(Y)}\|A''\|_{C^r(X)}\|B''\|_{C^r(Y)} \psi(n). 
\end{equation}
Then $F$ is mixing with rate $\psi.$
\end{corollary}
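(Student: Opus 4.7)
The plan is to reduce the general mixing estimate for $F$ on a sufficiently smooth space of observables to the hypothesized product-form bound \eqref{CorrProd} by invoking the product decomposition of Lemma \ref{lem:sob}. Concretely, let $w=w(r)$ be as in Lemma \ref{lem:sob} for the exponent $r$ appearing in the hypothesis, and work with the Banach space $\BAN=C^w(X\times Y)$.

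Given $\Phi_1,\Phi_2\in C^w(X\times Y)$, Lemma \ref{lem:sob} supplies product decompositions
$$ \Phi_1(x,y)=\sum_{k\geq 1} A'_k(x)\, B'_k(y), \qquad \Phi_2(x,y)=\sum_{j\geq 1} A''_j(x)\, B''_j(y), $$
with all factors in $C^r$, and with the associated norm series absolutely convergent and bounded in terms of $\|\Phi_i\|_{C^w}$ via \eqref{ProdDeco}. I would then expand
$$ \iint \Phi_1(x,y)\, \Phi_2(F^n(x,y))\, d(\mu\times\nu)(x,y) - \left(\int\Phi_1\, d(\mu\times\nu)\right)\left(\int \Phi_2\, d(\mu\times\nu)\right) $$
as a double sum over $k,j$ of mean-subtracted quadruple-product correlations, each of which is precisely the object controlled by the left-hand side of \eqref{CorrProd}. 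Applying \eqref{CorrProd} termwise and factorizing the resulting double sum gives
$$ K\psi(n)\left(\sum_k \|A'_k\|_{C^r}\|B'_k\|_{C^r}\right)\left(\sum_j \|A''_j\|_{C^r}\|B''_j\|_{C^r}\right), $$
which by two applications of \eqref{ProdDeco} is bounded by $K\, C(r,w)^2\, \psi(n)\,\|\Phi_1\|_{C^w}\|\Phi_2\|_{C^w}$. This is exactly the mixing estimate \eqref{Eq2Mix} for $F$ with rate $\psi$ on $\BAN=C^w(X\times Y)$.

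The one point requiring attention beyond routine bookkeeping is justifying the interchange of the two infinite sums with the integral against $d(\mu\times\nu)$; but this is immediate from the absolute convergence of the $C^r$-norm series combined with the uniform bound on each individual correlation afforded by \eqref{CorrProd} (in particular, partial sums converge in $C^r$, hence uniformly). Beyond this, there is no substantive obstacle: the content of the corollary is simply that once Lemma \ref{lem:sob} is in hand, verifying a mixing bound on pure tensor product observables $A(x)B(y)$ in each factor suffices to verify mixing of $F$ on smooth functions of $(x,y)$, with the smoothness loss $C^w\hookrightarrow C^r$ absorbing the Sobolev-type product expansion.
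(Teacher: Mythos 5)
Your proposal is correct and follows essentially the same route as the paper: decompose each observable via Lemma \ref{lem:sob} into sums of pure tensors, expand the correlation as a double sum of tensor-product correlations, apply the hypothesis \eqref{CorrProd} termwise, factor the resulting double series, and conclude with \eqref{ProdDeco}. The paper's proof does exactly this (using $\brrho_n$ notation), and your extra remark justifying the interchange of sums and integrals is a harmless elaboration of a step the paper treats as immediate.
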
 

\begin{proof}
Let 
$$\brrho_n(\Phi', \Phi''):=\zeta(\Phi' (\Phi''\circ F^n))-\zeta(\Phi')\zeta(\Phi''). $$
Decomposing $\Phi', \Phi''\in C^w$ as in \eqref{ProdDeco}, we get
$$\left| \brrho_n(\Phi', \Phi'')\right|=\left|\sum_{j, k} \brrho_n(A_j' B_j', A_k''B_k'')\right|
\leq K \psi(n) \sum_{j, k}\left( \|A_j'\|_r \|B_j'\|_r \|A_k''\|_r \|B_k''\|_r\right) $$
$\DS  \leq K \psi(n) \sum_{j }\left( \|A_j'\|_r \|B_j'\|_r \right) \sum_k \left(\|A_k''\|_r \|B_k''\|_r\right)
\leq K \psi(n) C^2 (r, w) \|\Phi'\|_w \|\Phi''\|_w. 
$
\end{proof}

\section{Mixing rates for mixing fibers}
\label{ScMix-MF}

\subsection{Double mixing}
\begin{theorem}
\label{ThDrift}
Suppose that $\mu(\tau)\neq 0$. 

(a) If $\tau$ satisfies exponential large deviation bounds  and
$f$ and $G$ are exponentially mixing, then $F$ is exponentially mixing.

(b) If $\tau$ satisfies polynomial large deviation bounds and
$f$ and $G$ are polynomially mixing, then $F$ is polynomially mixing.

(c) If $\tau$ satisfies superpolynomial large deviation bounds and
$f$ and $G$ are rapidly mixing, then $F$ is rapidly mixing.
\end{theorem}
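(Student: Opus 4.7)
The plan is to apply Corollary \ref{CrProductReduction}, so it suffices to bound the correlation
\[
 I_N := \iint A'(x) B'(y) A''(f^N x) B''(G_{\tau_N(x)} y)\, d\mu(x)\, d\nu(y)  - \mu(A')\mu(A'')\nu(B')\nu(B'')
\]
with the appropriate rate $\psi$, where $A', A'', B', B''$ are $C^r$ functions. The main structural idea is that because $\mu(\tau) \neq 0$, the argument $\tau_N(x)$ driving the fiber dynamics is typically of order $N$, so the mixing of $G$ can be invoked at time scale $\sim N$.

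Concretely, first fix $x$ and use mixing of $G$ on $\BAN = C^r$ to write
\[
 \int B'(y) B''(G_{\tau_N(x)} y)\, d\nu(y) = \nu(B')\nu(B'') + E(x),
\]
with $|E(x)| \leq C \psi_G(\|\tau_N(x)\|)\|B'\|_{C^r} \|B''\|_{C^r}$. Integrating in $x$ splits $I_N$ into a main part and an error part:
\[
 I_N = \nu(B')\nu(B'')\left[\int A'(x) A''(f^N x) d\mu(x) - \mu(A')\mu(A'')\right] + \int A'(x) A''(f^N x) E(x)\, d\mu(x).
\]
By mixing of $f$ on $C^r$, the bracket is bounded by $C\psi_f(N)\|A'\|_{C^r}\|A''\|_{C^r}$, which already has the desired rate.

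For the error integral, set $\eps := \|\mu(\tau)\|/2 > 0$ and let $\Omega_N = \{x : \|\tau_N/N - \mu(\tau)\| < \eps\}$. On $\Omega_N$, we have $\|\tau_N(x)\| \geq \eps N$, so $|E(x)| \leq C\psi_G(\eps N) \|B'\|_{C^r}\|B''\|_{C^r}$; on $\Omega_N^c$ we bound $|E(x)|$ trivially by $2\|B'\|_\infty\|B''\|_\infty$. This yields
\[
 \left|\int A' \cdot (A''\circ f^N) \cdot E\, d\mu\right| \leq C \|A'\|_\infty\|A''\|_\infty\|B'\|_{C^r}\|B''\|_{C^r} \bigl(\psi_G(\eps N) + \mu(\Omega_N^c)\bigr).
\]
Thus $|I_N|$ is controlled in terms of $\|A'\|_{C^r}\|A''\|_{C^r}\|B'\|_{C^r}\|B''\|_{C^r}$ times
\[
 \psi(N) := \psi_f(N) + \psi_G(\eps N) + \mu(\Omega_N^c).
\]

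Finally, plug in the hypotheses case-by-case. In (a), $\psi_f, \psi_G$ are exponential in $N$ and $\mu(\Omega_N^c)$ decays exponentially by the exponential large deviation bound, so $\psi(N)$ is exponential. In (b), all three terms decay as a (possibly different) negative power of $N$, so $\psi(N) = O(N^{-\delta})$ for some $\delta > 0$. In (c), $\psi_f, \psi_G$ can be made to decay faster than any power by choosing $r$ large, and $\mu(\Omega_N^c)$ is superpolynomially small, so $\psi(N) = O(N^{-m})$ for arbitrary $m$. This is essentially routine; the only place where the hypothesis $\mu(\tau) \neq 0$ is used is the choice of $\eps$ ensuring that on the large-deviation good set the shift time is linearly large, which is the only minor subtlety. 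No serious obstacle is anticipated.
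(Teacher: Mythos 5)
Your proof is correct and follows essentially the same route as the paper: you reduce to product observables, set $\varepsilon = \|\mu(\tau)\|/2$, split $X$ into the large-deviation good set $\Omega_N$ (where $\|\tau_N\|\gtrsim N$ and fiber mixing of $G$ applies) and its complement (controlled by the large deviation bound), and handle the remaining $\nu(B')\nu(B'')$ term via mixing of $f$. The only (purely cosmetic) difference is the order of decomposition: the paper first partitions the $x$-integral into $T_N$ and $X\setminus T_N$ and then subtracts the main term, while you subtract the main term first and then split the error integral.
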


\begin{proof}
(a) For $i=1,2$, let $\Phi_i(x,y)=A_i(x)B_i(y)$ be a $C^r$ function on $X\times Y$. Let $\rho(t):=\int_Y B_1(y)B_2(G_{t}y)d\nu(y)$. Since $G$ is exponentially mixing, there exist constants $C_1>0$ and $\kappa>0$ such that \begin{equation}\label{e-7.1-1}|\rho(t)-\nu(B_1)\nu(B_2)|\le C_1 \|B_1\|_{C^r} \|B_2\|_{C^r}e^{-\kappa 
 \|t \|} .\end{equation}
Taking $\varepsilon=\| \mu(\tau)\|/2$
 in the definition of exponential large deviation bounds,
we find that
there exist $C_0>0$ and $\delta>0$ such that $\mu(T_N)\le C_0e^{-\delta N}$, where
$$T_N:=\{x\in X:
\| \tau_N(x)-N\mu(\tau) \| \ge N \| \mu(\tau) \| /2 \}.$$

Now note that \begin{equation}\label{e-7}\int_{X\times Y} \Phi_1(x,y)\Phi_2(F^N(x,y))d(\mu\times\nu)=\int_X A_1(x)A_2(f^N(x))(\rho(\tau_N(x)))d\mu(x).\end{equation}
We rewrite the last integral as the sum of two integrals $\mathcal I_1+\mathcal I_2$, where
$$\mathcal I_1=\int_{T_N} A_1(x)A_2(f^N(x))(\rho(\tau_N(x)))d\mu(x),$$and$$\mathcal I_2=\int_{X\backslash T_N} A_1(x)A_2(f^N(x))(\rho(\tau_N(x)))d\mu(x).$$
 By exponential large deviation bounds,
$|\mathcal I_1|\le C_2\mu(T_N)\le C_3e^{-\delta N}$. For $\mathcal I_2$, since $f$ is exponentially mixing, it is enough to show that
$$\Delta:=\left|\mathcal I_2-(\nu(B_1)\nu(B_2))\int_{X\backslash T_N} A_1(x)A_2(f^N(x))d\mu(x)\right|$$ is exponentially small. Indeed, by (\ref{e-7.1-1}) 
$$
\Delta\le 
\left| \int_{X\backslash T_N} |A_1(x)||A_2(f^N(x))||\rho(\tau_N(x))-\nu(B_1)\nu(B_2)|d\mu(x)\right|$$
$$\le
C_4\|A_1\|_0\|A_2\|_0\|B_1\|_r\|B_2\|_r \cdot e^{-\kappa_1 N}\le C_4\|A_1\times B_1\|_r\|A_2\times B_2\|_r\cdot e^{-\kappa_1N}
$$
with $\kappa_1=\kappa/2$. This finishes the proof.
 The proofs of parts (b) and (c) are analogous to part (a). We will omit them.
\end{proof}

\begin{remark}
In part (b) above, if $\tau$ satisfies polynomial large deviation bounds with rate $N^{-\delta_1}$, and $f$, $G$ are polynomially mixing with rate $N^{-\delta_2}$ and $N^{-\delta_3}$ respectively, then $F$ is polynomially mixing with rate $N^{-\min\{\delta_1,\delta_2,\delta_3\}}$.
\end{remark}

\begin{remark}
Observe that the LLT was not needed
in Theorem \ref{ThDrift} and so the theorem remains valid if $\reals^d$ is replaced by 
an arbitrary Lie group, in which case $\tau_N$ means the product
$$ \tau_N(x) =\tau(f^{N-1} x) \dots \tau(fx) \tau(x). $$
\end{remark}

\begin{definition}\label{def:anti}
Assume that a cocycle $\tau$ 
 is such that $\frac{\tau_n-D_n}{L_n}$ converges as 
$n\to\infty$ to a non atomic distribution.
We say that $\tau$ satisfies the {\em anticoncentration inequality} if for {\bf every} unit cube $\cC\subset \reals^d$, 
$$
\mu\Big(\{x\in X\;:\; \tau_N(x)\in \cC\}\Big)\leq CL_N^{-d},
$$
for some global constant $C>0$.
\end{definition}

\begin{remark}
Note that by assumption there is a constant $R$ such that 
$$\mu(\|\tau_n\|\leq R L_n)\geq 0.5$$
so the power of $L_N$ in the anticoncentration inequality is optimal.  
\end{remark}

\begin{theorem} 
\label{ThAntiConMF}
Assume that for some $r\in \N$, $f$ is mixing with rate $\psi_f(N)=L_N^{-\alpha}$, for 
some $\alpha>0$ on $C^r,$  $\tau$ satisfies the anticoncentration inequality and $G$ is mixing with rate $\psi_G(\cdot)$ on $C^r$, where

\begin{equation}\label{eq:bd}
\int_{\reals^d}\psi_G(t)dt<+\infty.
\end{equation}
Then  $F$ is mixing with rate $\psi_F(N):=L_N^{-\min\{d,\alpha\}}$ on $C^w$ for some $w=w(r)\in \N$.
\end{theorem}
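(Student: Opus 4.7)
By Lemma \ref{lem:sob} and Corollary \ref{CrProductReduction}, it suffices to establish \eqref{CorrProd} with rate $\psi_F(N) = L_N^{-\min\{d,\alpha\}}$ for product functions $\Phi_i(x,y) = A_i(x) B_i(y)$, $i=1,2$, all taken in $C^r$. The plan is to integrate the fiber variable $y$ out first: set
$$\rho(t) := \int_Y B_1(y) B_2(G_t y)\, d\nu(y), \qquad c := \nu(B_1)\nu(B_2),$$
so that $G$-mixing on $C^r$ yields $|\rho(t)-c| \le C \|B_1\|_{C^r}\|B_2\|_{C^r}\, \psi_G(t)$. The integral on the left of \eqref{CorrProd} (after Fubini) becomes $\int_X A_1(x)\, A_2(f^N x)\, \rho(\tau_N(x))\, d\mu(x)$, so it remains to compare this with $c\, \mu(A_1)\mu(A_2)$.

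\textbf{The two pieces.} I would split
$$\int_X A_1 (A_2\!\circ\! f^N)\, (\rho\!\circ\!\tau_N)\, d\mu \;=\; c \int_X A_1(A_2\!\circ\! f^N)\, d\mu \;+\; \int_X A_1(A_2\!\circ\! f^N)\,\bigl(\rho\!\circ\!\tau_N - c\bigr)\, d\mu.$$
The first summand equals $c\, \mu(A_1)\mu(A_2) + O\!\bigl(L_N^{-\alpha}\|A_1\|_{C^r}\|A_2\|_{C^r}\bigr)$ directly from the assumed rate of $f$-mixing, which contributes the $L_N^{-\alpha}$ term in the final bound.

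\textbf{The anticoncentration piece.} For the second summand, I would tile $\reals^d$ by unit cubes $\{\cC_j\}_j$ centered at $t_j$, and estimate
$$\left|\int_X A_1 (A_2\!\circ\! f^N)(\rho\!\circ\!\tau_N - c)\, d\mu\right| \le C \|A_1\|_\infty \|A_2\|_\infty \|B_1\|_{C^r}\|B_2\|_{C^r} \sum_j \Bigl(\sup_{t\in\cC_j}\psi_G(t)\Bigr)\, \mu(\tau_N\in\cC_j).$$
The anticoncentration inequality (Definition \ref{def:anti}) gives $\mu(\tau_N\in\cC_j) \le C L_N^{-d}$ for every $j$, yielding the factor $L_N^{-d}$. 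What remains is to bound $\sum_j \sup_{t\in\cC_j}\psi_G(t)$ by $C\int_{\reals^d}\psi_G(t)\, dt$, which is finite by hypothesis \eqref{eq:bd}. The natural mixing rates $\psi_G$ are non-increasing in $\|t\|$, so $\sup_{t\in\cC_j}\psi_G(t) \le \psi_G(\tilde t_j)$ for a point $\tilde t_j$ of norm $\|t_j\|-c_d$; comparing the resulting Riemann sum with the integral of $\psi_G$ (up to a shift by a unit) bounds the sum by $C\int \psi_G < \infty$. This contributes the $L_N^{-d}$ term.

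\textbf{Conclusion and main obstacle.} Combining the two contributions gives a bound of order $L_N^{-\min\{d,\alpha\}}$ in the product $\prod_i \|A_i\|_{C^r}\|B_i\|_{C^r}$, which upgrades to the required $C^w$ bound via Corollary \ref{CrProductReduction}. The main technical obstacle is the passage from the \emph{integral} assumption $\int \psi_G < \infty$ to the discrete \emph{sum} $\sum_j \sup_{\cC_j}\psi_G$; this is cosmetic if one knows the monotonicity of $\psi_G$ in $\|t\|$ (which holds in the examples of interest), but in general one should either impose mild regularity on $\psi_G$ or replace $\sup$ by a slightly inflated average and absorb the constant into $C$.
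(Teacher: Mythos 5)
Your overall plan coincides with the paper's: reduce to product observables via Lemma \ref{lem:sob} / Corollary \ref{CrProductReduction}, integrate out the fiber to get the function $\rho(t)=\nu(B_1\cdot B_2\circ G_t)$, isolate the mean-field term (which gives $L_N^{-\alpha}$ from $f$-mixing), and control the centered remainder by tiling $\reals^d$ with unit cubes, invoking the anticoncentration bound on each cube, and summing $\sup_{\cC_j}\psi_G$ against \eqref{eq:bd}. Whether you subtract $\nu(B_1)\nu(B_2)$ at the end or center $B_i$ at the start is cosmetic.

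The one genuine gap is exactly the obstacle you flag at the end but do not resolve: passing from $\int_{\reals^d}\psi_G\,dt<\infty$ to $\sum_j \sup_{\cC_j}\psi_G<\infty$. Your two suggested fixes do not close it. Assuming monotonicity of $\psi_G$ in $\|t\|$ adds a hypothesis the theorem does not have; and ``replace $\sup$ by a slightly inflated average and absorb the constant'' is not an argument, because without regularity of $\psi_G$ a single unit cube can contain a spike that makes $\sup_{\cC_j}\psi_G$ arbitrarily large while contributing negligibly to the integral. The paper's resolution is not a regularity assumption on $\psi_G$ but a dynamical observation: for $t,\bar t$ in the same unit cube one has $\nu(B_1\cdot B_2\circ G_t)=\nu(B_1\cdot \hat B_2\circ G_{\bar t})$ with $\hat B_2=B_2\circ G_{t-\bar t}$, and $\|\hat B_2\|_{C^r}\le K\|B_2\|_{C^r}$ uniformly since $|t-\bar t|$ is bounded. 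Hence the mixing estimate with $\psi_G(t)$ is equally valid with $K\inf_{\cC_j}\psi_G$ for all $t\in\cC_j$, so one may WLOG replace $\psi_G$ by its cube-wise infimum (up to a constant), which is constant on each unit cube and thus satisfies $\sup_{\cC_j}\psi_G\le K\inf_{\cC_j}\psi_G$; then $\sum_j\sup_{\cC_j}\psi_G\le K\int\psi_G<\infty$ with no extra hypothesis. This step is what you are missing; everything else in your proposal matches the paper.
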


\begin{theorem}
\label{ThLLT-MF}
Assume that for some $r\in \N$, $f$ is mixing with rate $\psi_f(N)=L_N^{-\alpha}$, for 
some $\alpha>0$ on $C^r,$ $G$ is mixing with rate $\psi_G(\cdot)$ on $C^r$, 
$\tau$ satisfies the mixing LLT with zero drift.

(a)  Suppose $\tau$ satisfies the
 anticoncentration inequality. If $\psi_G(\cdot)$ satisifies \eqref{eq:bd} and 
\begin{equation}
\label{eq:zerofibermean}
\int \Phi_1(x,y) d\nu(y)\equiv 0,
\end{equation} then

\begin{equation}\label{TTInvGreenKubo}
\int \Phi_1(z) \Phi_2(F^N z) d\zeta(z)= 
\end{equation}
$$ \fp(0) L_N^{-d} \iiiint \Phi_1(x, y)\Phi_2(\brx, G_t y) d\mu(x) d\nu(y) d\mu(\brx) 
dt+o\left(L_N^{-d}\right). $$

(b) If $\psi_G(t)=\|t\|^{-\beta}$, for $\beta<d$, 
then  $F$ is mixing with rate $\psi_F(N):=L_N^{- \min\{\beta, \alpha\}}$ on $C^w$ for some $w=w(r)\in \N$.

(c) If 
$\min \{\alpha, d \}>\beta$ and for zero mean functions we have  
$$ \int B_1(y) B_2(G_t y) d\nu=q(B_1, B_2) \Psi(t)+o(\|t\|^{-\beta}) $$
where $q$ is a bounded bilinear form on $C^r(Y)$ and $\Psi$ is a homogeneous function
of degree $-\beta$, then
\begin{equation}\label{TTInvDiv}
\int \Phi_1(z) \Phi_2(F^N z) d\zeta(z)
=L_N^{-\beta} Q(\Phi_1, \Phi_2) 
\int_{\reals^d} \fp(t) \Psi(t) dt+o\left(L_N^{-\beta}\right)
\end{equation}
where 
$$ Q(\Phi_1, \Phi_2)=\int q(\Phi(x_1, \cdot), \Phi_2(x_2, \cdot)) d\mu(x_1) d\mu(x_2) . $$

\end{theorem}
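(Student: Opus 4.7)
The plan is to reduce all three parts to the case of product functions $\Phi_i(x,y)=A_i(x)B_i(y)$ via Lemma \ref{lem:sob}, arguing as in Corollary \ref{CrProductReduction}: in each of (a)--(c) the leading term depends bilinearly on $(\Phi_1,\Phi_2)$ and the error is $o$ of the leading order, so uniform estimates on products with controlled norms sum up to give the stated asymptotics for general $C^w$ functions. For products one has the key identity
\begin{equation*}
\int \Phi_1(z)\Phi_2(F^N z)\,d\zeta(z)=\int_X A_1(x)\, A_2(f^N x)\,\rho(\tau_N(x))\,d\mu(x),\qquad \rho(t):=\int_Y B_1(y)B_2(G_t y)\,d\nu(y),
\end{equation*}
and all three parts analyze this integral by combining the mixing LLT (converting information about the law of $\tau_N$ into an integral against $\fp$), the anticoncentration inequality (controlling $\mu(\tau_N\in I)$ on all scales), and the decay of $\rho$ at infinity coming from $G$-mixing.

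For part (a), since $\int \Phi_1\,d\nu\equiv 0$, after a harmless regrouping of the product decomposition (replacing $B_k$ by $B_k-\nu(B_k)$) one may assume $\nu(B_1)=0$, whence $|\rho(t)|\le C\psi_G(t)$ and $\rho\in L^1(\reals^d)$ by \eqref{eq:bd}. I would truncate $\rho=\rho\,\one_{\|t\|\le R}+\rho\,\one_{\|t\|> R}$ and apply the Portmanteau form \eqref{EqPortmanteau} of the mixing LLT with $z=0$, $D_N=0$ to the bounded piece, obtaining that $L_N^d$ times this integral tends to $\fp(0)\mu(A_1)\mu(A_2)\int_{\|t\|\le R}\rho(t)\,dt$. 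The tail I would bound by partitioning $\{\|t\|> R\}$ into unit cubes $\{I_j\}$ with centers $t_j$ and using the anticoncentration inequality $\mu(\tau_N\in I_j)\le CL_N^{-d}$ to get
\begin{equation*}
L_N^d\bigl|\mu\bigl(A_1\, A_2\!\circ\! f^N\, \rho(\tau_N)\,\one_{\|\tau_N\|>R}\bigr)\bigr|\le C\|A_1\|_\infty\|A_2\|_\infty \sum_{\|t_j\|>R}\sup_{I_j}|\rho|,
\end{equation*}
which tends to $0$ as $R\to\infty$ by integrability. Sending $N\to\infty$ first and $R\to\infty$ second yields \eqref{TTInvGreenKubo}.

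For part (b), write $\rho(t)=\nu(B_1)\nu(B_2)+\tilde\rho(t)$ with $|\tilde\rho(t)|\le C\|t\|^{-\beta}$. The constant piece yields $\nu(B_1)\nu(B_2)\,\mu(A_1\, A_2\!\circ\! f^N)=\zeta(\Phi_1)\zeta(\Phi_2)+O(L_N^{-\alpha})$ by $f$-mixing. For the $\tilde\rho$ piece I would cover $\reals^d$ by unit cubes: the anticoncentration inequality gives on $\{\|t\|\le KL_N\}$ a bound $CL_N^{-d}\int_{\|t\|\le KL_N}\|t\|^{-\beta}\,dt=O(L_N^{-\beta})$, using $\beta<d$, while tightness of $\tau_N/L_N$ (which follows from the mixing LLT applied with $A_0=A_1=1$) combined with $|\tilde\rho|\le C(KL_N)^{-\beta}$ outside controls the remainder after letting $K\to\infty$. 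Part (c) is handled by the same machinery: decompose $\rho(t)=\nu(B_1)\nu(B_2)+q(B_1^0,B_2^0)\Psi(t)+\eta(t)$ with $\eta(t)=o(\|t\|^{-\beta})$; since $\alpha>\beta$ the constant term contributes only $\zeta(\Phi_1)\zeta(\Phi_2)+o(L_N^{-\beta})$, the $\Psi$-term is evaluated via the mixing LLT and the scaling $\Psi(L_N s)=L_N^{-\beta}\Psi(s)$ to produce the leading $L_N^{-\beta}Q(\Phi_1,\Phi_2)\int_{\reals^d}\fp(t)\Psi(t)\,dt$, and $\eta$ gives $o(L_N^{-\beta})$ by the anticoncentration estimate of (b) with an $\varepsilon\|t\|^{-\beta}$ envelope for $\|t\|>R$ and a fixed bound on $\|t\|\le R$.

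The main obstacle is matching scales: the mixing LLT is a local statement (convergence on bounded cubes near a fixed scale-$L_N$ point), so to apply it to the non-compactly-supported kernels $\rho$, $\Psi$ one must splice it to the anticoncentration inequality, which is a global bound at the $L_N$ scale, and handle the crossover region $\{\|t\|\sim L_N\}$ where both estimates are needed simultaneously. In part (c) one must also check that $\int \fp(t)\Psi(t)\,dt$ makes sense: the hypothesis $\beta<d$ gives local integrability of $\Psi$ near the origin, while boundedness of $\fp$ together with the $\|t\|^{-\beta}$ decay of $\Psi$ handles the behaviour at infinity.
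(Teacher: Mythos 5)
Your overall approach coincides with the paper's: reduce to product observables $\Phi_i=A_iB_i$ via Lemma~\ref{lem:sob}, center so that $\nu(B_i)=0$, pass to the integral of $A_1\,(A_2\circ f^N)\,\rho(\tau_N)$ with $\rho(t)=\int B_1\,(B_2\circ G_t)\,d\nu$, cover $\reals^d$ by unit cubes, and splice a local estimate near scale $L_N$ to a decay estimate at infinity. Your part~(a) is essentially the paper's dominated-convergence argument, rephrased as ``$N\to\infty$ then $R\to\infty$''; that is fine.

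The genuine gap is in parts~(b) and~(c), where you invoke the anticoncentration inequality on the inner region $\{\|t\|\le KL_N\}$. That inequality is \emph{not} a hypothesis of (b) or (c) — it is assumed only in (a). The paper sidesteps this by extracting an anticoncentration-type bound directly from the uniformity built into Definition~\ref{DfnMixLLT}: with $A_0=A_1=1$ and $z_n=t_j$, the point $t_j/L_N$ ranges over the compact set $\{\|z\|\le dK\}$ when $\|t_j\|\le dK L_N$, and the LLT limit $\fp(t_j/L_N)\Vol(I_0)$ holds uniformly there, giving $\mu(\tau_N\in I_j)\le 2\fp^*L_N^{-d}$ for all such $j$ and all $N$ large. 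This LLT-derived bound is exactly what your argument needs on $\|t\|\le KL_N$, and it is strictly weaker than the anticoncentration hypothesis (which controls \emph{all} unit cubes). So the gap is closeable, but as written you are using a hypothesis you do not have.

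Two smaller points. First, in part~(b) the paper avoids even the tightness step by truncating at $L_N$ rather than $KL_N$: on $\{\|t\|>L_N\}$ one already has $|\rho|\le C L_N^{-\beta}$ pointwise, so the tail is $O(L_N^{-\beta})$ with no probability estimate needed. Second, in part~(c) the leading $\Psi$-term requires a separate treatment near the origin, since $\Psi$ is unbounded there; the paper uses an explicit three-zone split $[-\delta L_N,\delta L_N]^d$, the annulus up to $L_N/\delta$, and the exterior, establishing that the inner and outer pieces are both $O\bigl((\delta/L_N)^\beta\bigr)$ before letting $\delta\to 0$. You correctly identify the scale-matching issue at the end, but your sketch of (c) does not carry out this split; the fixed-$R$ truncation you use for the error term $\eta$ does not handle the unboundedness of the main term $\Psi$ near $t=0$.
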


\begin{remark}
In the case $d=1$, \eqref{TTInvGreenKubo} is proven in \cite{LB06} under a slightly more restrictive condition.
\end{remark}

\begin{remark}
We note that the integral in \eqref{TTInvDiv} converges. In fact, convergence near $0$ follows because
$\fp$ is bounded and $d > \beta$, while convergence near infinity follows since $\Psi$ is bounded outside of the unit sphere.
We also observe that for $\Phi_j(x,y)=A_j(x) B_j(y)$
\begin{equation}
\label{Q-q}
 Q(\Phi_1, \Phi_2)=\mu(A_1) \mu(A_2) q(B_1, B_2).
 \end{equation}
\end{remark}

\begin{proof}[Proof of Theorem \ref{ThAntiConMF}]
For $i=1,2$, let $\Phi_i(x,y)=A_i(x)\tilde{B}_i(y)$, where $A_i\in C^r(X)$ and $\tilde{B}_i\in C^r(Y)$.  Let $B_i=\tilde{B}_i-\nu(\tilde{B}_i)$.
Let $\rho(t):=\int_Y B_1(y)B_2(G_{t}y)d\nu(y)$. 
Note that \begin{multline}\label{e-77}\int_{X\times Y} \Phi_1(x,y)\Phi_2(F^N(x,y))d(\mu\times\nu)=
\int_X A_1(x)A_2(f^N(x))\cdot\rho(\tau_N(x))d\mu(x)+\\
\nu(\tilde{B}_1)\nu(\tilde{B}_2)\int_XA_1(x)A_2(f^N(x))d\mu(x).\end{multline}
Since $f$ is mixing with rate $L_N^{-\alpha}$ on $C^r$, the second summand is equal to $\mu(A_1)\mu(A_2)$ up to an error less than $C\|A_1\|_r\|A_2\|_r L_N^{-\alpha}$. 
It remains to estimate the first summand.

Let $\{\cC_i\}_{i=1}^{\infty}$ be a countable disjoint family of unit cubes in $\reals^d$ such that 
$\reals^d=\bigcup_{i}\cC_i$.  
Below we assume without the loss of generality 
that the function $\psi$ from \eqref{eq:bd} satisfies
\begin{equation}
\label{Sup-Inf}
\sup_{\cC_i} \psi(t) \leq K \inf_{\cC_i} \psi(t). 
\end{equation}
Indeed, given $t, \brt\in \cC_i$ we have
$$ \nu(B_1\cdot B_2\circ G_t)=\nu(B_1 \cdot\hB_2\circ G_\brt) $$
where $\hB_2=B_2\circ G_{t-\brt}.$ The last integral is smaller in absolute value than
$$ \psi(\brt) \|B_1\|_{C^r} \|\hB_2\|_{C^r}\leq 
K \psi(\brt) \|B_1\|_{C^r} \|B_2\|_{C^r}. $$
Thus decreasing $\psi$ if necessary we may assume that \eqref{Sup-Inf} holds.

Note first that since $\tau$ is bounded, we have
\begin{equation}
\label{LmCubeSum}
 \int_X A_1(x)A_2(f^N(x))\cdot\rho(\tau_N(x))d\mu(x)=
\sum_{i=1}^{\infty} \int_XA_1(x)A_2(f^N(x))\cdot\rho(\tau_N(x))\one_{\cC_i}(\tau_N(x))d\mu(x).
\end{equation}

Using that $G$ is mixing with rate $\psi_G$ on $C^r$,  \eqref{LmCubeSum} shows that

$$\left| \int_X A_1(x)A_2(f^N(x))\cdot\rho(\tau_N(x))d\mu(x)\right|\leq $$
$$C\|A_1\|_0\|A_2\|_0\|B_1\|_r\|B_2\|_r\sum_{i=1}^{\infty}[\sup_{t\in \cC_i}\psi_G(t)]\mu(\{x\in X\;:\; \tau_N(x)\in \cC_i\}).
$$
Together with the anticoncentration inequality, we have
{\small
\begin{equation}
\label{Cor-CubeSum}
\left| \int_X A_1(x)A_2(f^N(x))\cdot\rho(\tau_N(x))d\mu(x)\right|\leq C D\cdot\|A_1\|_0\|A_2\|_0\|B_1\|_r\|B_2\|_rL_N^{-d}\sum_{i=1}^{\infty}\sup_{t\in \cC_i}\psi_G(t).
\end{equation}}
Now by \eqref{Sup-Inf}
\begin{equation}
\label{Sup-Int}
\sum_{i=1}^{\infty}\sup_{t\in \cC_i}\psi_G(t)\leq C'\int_{\reals^d}\psi_G(t)dt<C''.
\end{equation}
Summarizing, we get 
$$
\int_XA_1(x)A_2(f^N(x))\cdot\rho(\tau_N(x))d\mu(x)\le C'''\|A_1\|_0\|A_2\|_0\|B_1\|_r\|B_2\|_rL_N^{-d}
$$
showing that $F$ is mixing with rate $L_N^{-\min\{d,\alpha\}}.$
\end{proof} 

\begin{proof}[Proof of Theorem \ref{ThLLT-MF}]
By the same argument in the proof of Theorem \ref{ThAntiConMF} we just need to estimate
$$ \int_XA_1(x)A_2(f^N(x))\cdot\rho(\tau_N(x))d\mu(x). $$
To prove part (a) note that due to \eqref{EqPortmanteau} for each fixed $i$,
$$ 
\lim_{N\to\infty} L_N^{d} \int_X A_1(x)A_2(f^N(x))\cdot\rho(\tau_N(x))\one_{\cC_i}(\tau_N(x))d\mu(x)=
\fp(0) \int_{\cC_i} \rho(t)  dt\;  \mu(A_1) \mu(A_2). $$
This together with the Dominated Convergence Theorem 
(note that in part (a) we assume the conditions of 
Theorem \ref{ThAntiConMF} whence \eqref{Cor-CubeSum} and \eqref{Sup-Int} apply) 
shows that
$$ \lim_{N\to\infty} L_N^{d} \int_X A_1(x)A_2(f^N(x))\cdot\rho(\tau_N(x))
d\mu(x)=
 \fp(0)
\mu(A_1) \mu(A_2) \int_{\reals^d} \rho(t) dt $$
proving \eqref{TTInvGreenKubo}.

To prove part (b), split
$\DS 
\int_XA_1(x)A_2(f^N(x))\cdot\rho(\tau_N(x))d\mu(x)=S_1+S_2,
$ where 
$$
S_1:=\int_{X} A_1(x)A_2(f^N(x))\cdot\rho(\tau_N(x))\one_{[-L_N,L_N]^d}(\tau_N(x))d\mu(x),
$$ 
and
$$
S_2:=\int_{X} A_1(x)A_2(f^N(x))\cdot\rho(\tau_N(x))\one_{\reals^d\setminus[-L_N,L_N]^d}(\tau_N(x))d\mu(x).
$$

To estimate $S_2$, notice that for $x$ as in $S_2$,  
$$\rho(\tau_N(x))\leq C\|B_1\|_r\|B_2\|_r \psi(\tau_N(x))\leq C_0\|B_1\|_r\|B_2\|_r \psi(L_N)\le C_0\|B_1\|_r\|B_2\|_rL_N^{-\beta}.$$  
Therefore 
$\DS 
S_2\leq C_0\|A_1\|_0\|A_2\|_0\|B_1\|_r\|B_2\|_rL_N^{- \beta}.
$

It remains to estimate $S_1$.
We trivially have 
\begin{align}\label{eq:aa}
\left|S_1\right|=&
\left|\int_{X} A_1(x)A_2(f^N(x))\cdot\rho(\tau_N(x))\one_{[-L_N,L_N]^d}(\tau_N(x))d\mu(x)\right|
\nonumber\\\le& \|A_1\|_0\|A_2\|_0 \int_{X}|\rho(\tau_N(x))|\one_{[-L_N,L_N]^d}(\tau_N(x))d\mu(x).
\end{align}
Cover $[-L_N,L_N]^d$ with (at most) $\Big([L_N]+1\Big)^d$ disjoint cubes $\{I_j\}$ of size $1$ 
centered at $t_j$, so that $I_j$'s are  translates of the cube $I_0$. 
By the mixing LLT for $z_n=t_j$ (notice that $\|t_j\|\leq  d L_N$ and so $t_j/L_N$ belongs to a compact set), and $A_0=A_1=1$, we get (for sufficiently large $N$),
$$
L_N^d\mu(\{x\in X\;:\; \tau_N(x)\in I_j\})< 2\fp^*{\rm Vol}(I_0)=2\fp^*
$$
where $\fp^*=\sup_t \fp.$  Therefore,
$$
\int_{X}|\rho(\tau_N(x))|\one_{[-L_N,L_N]^d}(\tau_N(x))d\mu(x)=\sum_j \int_{X}|\rho(\tau_N(x))|\one_{I_j}(\tau_N(x))d\mu(x)$$
$$\le
2 \fp^* L_N^{-d}\sum_j\sup_{t\in I_j}|\rho(t)|
\leq C  L_N^{-d} \int_{[-L_N, L_N]^d} \rho(t) dt
\leq C L_N^{-d} L_N^{d-\beta}=C L_N^{-\beta} ,
$$
 completing the proof of (b). 
 
To prove part (c), fix a small $\delta$ and split
$$ 
\int_XA_1(x)A_2(f^N(x))\cdot\rho(\tau_N(x))d\mu(x)=S_1+S_2+S_3
$$ where  
the integrand in $S_1$ is multiplied by $\one_{[-\delta L_N, \delta L_N]^d}(\tau_N(x))$, the integrand in $S_2$ is
multiplied by 
$$\one_{[-L_N/\delta, L_N/\delta]^d\setminus [-\delta L_N, \delta L_N]^d}
(\tau_N(x))$$
and the integrand in $S_3$ is multiplied by 
$\one_{\reals^d \setminus [-L_N/\delta, L_N/\delta]^d}(\tau_N(x)).$
Arguing as in the proof of part (b) we obtain that 
$S_3=O\left(\left(\frac{\delta}{L_N} \right)^{\beta}\right)$.
Since the integrand is bounded, we have
$S_1 = O\left(\left(\frac{\delta}{L_N} \right)^{d} \right)
=O\left(\left(\frac{\delta}{L_N} \right)^{\beta}\right)$.
To handle $S_2$ we divide the domain of integration into unit cubes $I_j.$ Let
$t_j$ to be the center of $I_j.$ Using the homogenuity of $\Psi$ we conclude from the mixing LLT that
$$ \int A_1(x) A_2(f^N x) \rho(\tau_N(x)) \one_{I_j}(\tau_N(x)) d\mu(x)$$
$$=
L_N^{-(d+\beta)} \mu(A_1) \mu(A_2) q(B_1, B_2) \;\fp\left(\frac{t_j}{L_N}\right)  \Psi\left(\frac{t_j}{L_N}\right) +o\left(L_N^{-(d+\beta)}\right) . $$
Summing over $j$ and using \eqref{Q-q} we obtain
$$ S_2=
L_N^{-\beta} Q(\Phi_1, \Phi_2) 
\int_{\cT_\delta} \fp(t) \Psi(t) dt+o\left(L_N^{-\beta}\right)  $$
where the domain of integration is
$\DS  \cT_\delta=\left[-\frac{1}{\delta}, \frac{1}{\delta}\right]^d\setminus [-\delta, \delta]^d. $
Combing our estimates for $S_1, S_2$ and $S_3$ we obtain
$$ \int \Phi_1(z) \Phi_2(F^n z) d\zeta(z)=
L_N^{-\beta} Q(\Phi_1, \Phi_2) 
\int_{\cT_\delta} \fp(t) \Psi(t) dt+o\left(L_N^{-\beta}\right) +
O\left(\left(\frac{\delta}{L_N}\right)^\beta\right). $$
Letting $\delta\to 0$ we obtain \eqref{TTInvDiv} for product observables, which 
by Lemma \ref{lem:sob} is sufficient to conclude the general case.
\end{proof}

\begin{remark}
\label{RmGKRelative}
Note that the fact that $\BAN=C^r$ was only used to decompose any $\Phi\in C^w(X\times Y)$ as
\begin{equation}
\label{Ban-Sum}
  \Phi(x,y)=\sum_n A_n(x) B_n(y),  \text{ where } 
    \sum_n \|A_n\|_{C^r} \|B_n\|_{C^r}<\infty. 
\end{equation}

Therefore the conclusions of 
Theorems \ref{ThAntiConMF}, \ref{ThLLT-MF} remain valid if \eqref{Eq2Mix} holds on arbitrary space
$\BAN$
provided that $\Phi_1, \Phi_2$ admit decomposition \eqref{Ban-Sum}. 
\end{remark}

\begin{remark}
The results of this section apply (with obvious modifications) to continuous time $T, T^{-1}$ systems of the
form
\begin{equation}
\label{TTInvFlows}
F^t(x,y)=(\phi^t (x), G_{\tau_t(x)} y) 
\end{equation}
where $\phi$ is a flow on $X$ and 
\begin{equation}
\label{TimeInt} 
\tau_t(x)=\int_0^t \tau(\phi^s (x)) ds.
\end{equation}
Note that due to the fact that
$\DS \zeta(H_1 (H_2 \circ F^{n+\delta}))=\zeta(H_1 ((H_2 \circ F^{\delta})\circ F^n))$
it is sufficient to control the correlation at integer times. Next $F^1$ is 
$T, T^{-1}$-transformation
corresponding to $f=\phi^1,$ $\tau=\tau_1.$ We note however, that in several case for time one maps
of the flow the LLT is unknown (or false) unless the observable is the time integral given by 
\eqref{TimeInt}. We refer the reader to \cite{DN-Flows} for the discussion of mixing LLT for continuous time
systems.
\end{remark}

\begin{example}
\label{TwoFlowEx}
(a) Let $g_t$ be an exponentially mixing Anosov flow on some manifold $M.$ Consider a continuous
$T, T^{-1}$ system $F_1^t$ with $X=Y=M$ and $\phi^t=G_t=g^t.$ Then Theorem \ref{ThLLT-MF}(a) shows that
for smooth zero mean observables
$$ \lim_{t\to\infty} \sqrt{t} \zeta(H_1 (H_2\circ F^t))=Q_1(H_1, H_2) $$
where $Q_1$ is given by \eqref{TTInvGreenKubo}.
Indeed, the condition \eqref{eq:zerofibermean} can be relaxed and 
the conclusion of Theorem \ref{ThLLT-MF}(a) holds for all zero mean
smooth observables assuming that $\alpha > d$ (in this example,
$\alpha$ is arbitrarily large and $d=1$).

(b) For any positive integer $k$, define indutively a continuous
$T, T^{-1}$ system $F_k^t$ with $X=M,$ $Y=M^k$, 
$\phi^t=g^t$ and $G_t=F_{k-1}^t$, where $F_1^t$ is 
the flow from the part (a). 
Then Theorem \ref{ThLLT-MF}(c) shows that
for smooth zero mean observables
$$ \lim_{t\to\infty} t^{ 2^{-k}} \zeta(H_1 (H_2\circ F^t))=Q_k(H_1, H_2) $$
where $Q_k$ is given in terms of $Q_{k-1}$ by \eqref{TTInvDiv}.
\end{example}

\subsection{Multiple mixing}
\label{ScMult-MF}

\begin{definition}
$G_t$ is  {\em mixing of order $s$ with rate $\psi$ on a space $\BAN$} if
$$ \left| \nu\left(\prod_{j=1}^s B_j(G_{t_j} y)\right) -\prod_{j=1}^s \nu(B_j) \right|\leq
C \psi(\delta(t_1, \dots t_s)) \prod_{j=1}^s \|B_j\|_\BAN $$ 
where
$$ \delta(t_1, \dots t_s)=\min_{i\neq j} \|t_i-t_j\|. $$
This definition extends to maps (such as to $f$ and $F$) in the 
natural way.
\end{definition}

\begin{theorem}
  If $\tau$ satisfies mixing LLT with zero drift
  and 
$f$ and $G$ are mixing of order $s$ with rate $t^{-\alpha}$ with $\alpha>d$, then
$F$ is mixing of order $s$ with rate $\psi_F(N)= L_N^{-d}.$ 
\end{theorem}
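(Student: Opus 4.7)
The plan is to extend the argument of Theorem~\ref{ThLLT-MF}(a) from the case $s=2$ to general $s$, using that $\alpha>d$ makes $\psi_G$ integrable on $\reals^d$. By Lemma~\ref{lem:sob} I would first reduce to product observables $\Phi_j=A_j\otimes B_j$. Writing each $\Phi_j$ as $(A_j-\mu(A_j))\otimes B_j+\mu(A_j)\otimes(B_j-\nu(B_j))+\mu(A_j)\nu(B_j)$ and using $\zeta(\Phi_j)=0$, the problem reduces to the principal case $\nu(B_j)=0$ for every $j$: auxiliary terms in which some $B_j$ is replaced by its mean collapse (after absorbing the constant) either to $s$-fold multiple mixing of $f$---assumed with rate $t^{-\alpha}=o(L_N^{-d})$ since $\alpha>d$---or to lower-order multiple mixing of $F$, handled inductively in $s$.

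Assume $n_1<\dots<n_s$ and put $m_j=n_j-n_{j-1}$ ($n_0=0$), $N=\min_j m_j$, $v_j=\tau_{n_j}$, $u_j=v_j-v_{j-1}=\tau_{m_j}\circ f^{n_{j-1}}$. Fubini yields
\[
\mathcal I:=\zeta\Bigl(\prod_{j=1}^s \Phi_j\circ F^{n_j}\Bigr)=\int_X \prod_j A_j(f^{n_j}x)\cdot\rho(\vec v)\,d\mu,\qquad \rho(\vec v):=\int_Y\prod_j B_j(G_{v_j}y)\,d\nu.
\]
Applying mixing of $G$ of order $s$ to the $s$ zero-mean factors gives
\[
|\rho(\vec v)|\le C\prod_j\|B_j\|_{C^r}\cdot\psi_G\bigl(\delta(\vec v)\bigr),\qquad \delta(\vec v):=\min_{i\ne k}|v_i-v_k|.
\]
The next ingredient is to combine the mixing LLT for each fresh increment $\tau_{m_j}$ with mixing of $f$ of order $s$ applied to smoothings of cube indicators, producing a joint anticoncentration $\mu(u_j\in \cC_j\ \forall j)\le C\prod_j L_{m_j}^{-d}$ for unit cubes $\cC_j\subset\reals^d$.

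Given these two ingredients, the final step is to partition $\vec u\in\reals^{ds}$ into unit cubes and substitute $w_j=u_j/L_{m_j}$, so that the Jacobian $\prod L_{m_j}^d$ cancels the anticoncentration factor and produces the bounded limit density $\prod_j \fp(w_j)$. The estimate reduces to bounding
\[
\int_{\reals^{ds}}\psi_G\bigl(\delta(\vec v(\vec w))\bigr)\prod_j \fp(w_j)\,d\vec w,\qquad v_j(\vec w):=\sum_{k\le j}L_{m_k}w_k.
\]
In the \emph{non-collision} region where every $|v_i-v_k|\gtrsim L_N$, one has $\psi_G(\delta)=O(L_N^{-\alpha})$ and integrability against $\prod_j \fp(w_j)$ follows from $\alpha>d$; in each \emph{collision} stratum where some $|v_i-v_k|\lesssim 1$, the $\vec w$-measure is $O(L_N^{-d})$ (by single anticoncentration applied to $\tau_{n_k-n_i}$) and one uses the trivial bound $|\rho|\le\prod\|B_j\|_0$. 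Both contributions are $O(L_N^{-d})$, which is the desired rate.

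The main obstacle is the multidimensional analysis of this final integral: $\delta(\vec v)$ is the minimum over $\binom{s}{2}$ pairwise distances---not merely the consecutive $|u_j|$---so the non-collision/collision decomposition requires indexing by which pair realizes the minimum and bounding each low-dimensional collision stratum separately. A secondary technicality is the smoothing step in the anticoncentration derivation: the cube indicators must be replaced by $C^r$-functions of $x$ with norms controlled uniformly in $m_j$, so that the assumed mixing of $f$ on $C^r$ can be invoked in the multiple-mixing form.
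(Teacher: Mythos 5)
There is a genuine gap at the step where you claim a \emph{joint} anticoncentration bound $\mu(u_j\in\cC_j\ \forall j)\le C\prod_j L_{m_j}^{-d}$ can be obtained by combining the mixing LLT for each increment with mixing of $f$ of order $s$ applied to smoothed cube indicators. This does not follow from the theorem's hypotheses. After replacing $\one_{\cC_j}$ by a $C^r$ bump function $\phi_j$, the function $x\mapsto\phi_j(\tau_{m_j}(x))$ is a Birkhoff sum composed with a smooth map, and its $C^r(X)$ norm grows polynomially in $m_j$; the separation of the evaluation times is also $\asymp m_j$, so the multiple-mixing error for $f$, roughly $m_{\min}^{-\alpha}\prod_j m_j^r$, does not vanish. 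You flag this as a secondary technicality, but it is in fact the crux: without it, the $\vec w$-change of variables and the bound you write for the collision/non-collision integral have no foundation. Deriving such a product bound would essentially require assuming the mixing \emph{multiple} LLT, or the anticoncentration large deviation bound of order $s$, neither of which is hypothesized here (they are hypotheses of other theorems in the paper, e.g.\ Theorem~\ref{thm:social}).

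The paper's proof sidesteps joint anticoncentration entirely, and this is the key structural difference. Set $\bar N=\min_{i\ne j}|N_i-N_j|$ and split $X$ into $Z$, where $\min_{i\ne j}\|\tau_{N_i}(x)-\tau_{N_j}(x)\|\ge L_{\bar N}$, and $Z^c$. On $Z$, the fiber correlation is $O(L_{\bar N}^{-\alpha})=o(L_{\bar N}^{-d})$ directly by order-$s$ mixing of $G$. On $Z^c$, one further stratifies by which single pair $(i,j)$ realizes the minimal distance, decomposes $[-L_{\bar N},L_{\bar N}]^d$ into unit cubes $C_k$, uses the $G$-mixing rate evaluated at $\tau_{N_i}-\tau_{N_j}\in C_k$, and then applies the (single) mixing LLT with $A_0=A_1=1$ to the one difference $\tau_{N_i}-\tau_{N_j}=\tau_{N_i-N_j}\circ f^{N_j}$, giving $\mu(\tau_{N_i}-\tau_{N_j}\in C_k)\le C L_{\bar N}^{-d}$ by measure preservation. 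Summing over $k$ using $\alpha>d$ and then over pairs $(i,j)$ yields the claim. In short, the hypothesis only delivers one-pair anticoncentration, and the paper's argument is carefully designed to need exactly that and no more; your route requires strictly stronger input.
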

\begin{proof} For $i=1,\ldots, s$, let $\Phi_i(x,y)=A_i(x)B_i(y)$, where $A_i\in C^r(X)$ and $B_i\in C^r(Y)$. Let $\rho(t_1,t_2,\ldots, t_{s}):=\int_Y \prod_{i=1}^{s}B_i(G_{t_i}y)d\nu(y)$ (with $t_1=0$). 
We have
\begin{equation}
\label{eq77}
\int_{X\times Y} \prod_{i=1}^{s}
\Phi_i(F^{N_i}(x,y))d(\mu\times\nu)=
\int_X \prod_{i=1}^{s}A_i(f^{N_i}x)\cdot\rho(\tau_{N_1}(x),\ldots,\tau_{N_{s}}(x))d\mu(x)
\end{equation}
$$
\int_X \prod_{i=1}^{s}A_i(f^{N_i}x)\cdot\Big(\rho(\tau_{N_1}(x),\ldots,\tau_{N_{s}}(x))-\prod_{i=1}^{s}\nu(B_i)\Big)d\mu(x)\nonumber+
\prod_{i=1}^{s}\nu(B_i)\int_X \prod_{i=1}^{s}A_i(f^{N_i}x)d\mu(x).
$$

Note that since $f$ is mixing of order $s$ with rate $N^{-\alpha}$,
the last term above is equal to $\prod_{i=1}^s\mu(A_i)\nu(B_i)$ up to an error of size at most
$\DS {\rm O}\left(\prod_{i=1}^s\|A_i\|_r\min_{i\neq j}
|N_i-N_j|^{-\alpha}\right)$.
It is therefore enough to bound the first term.
Notice moreover that since $\tau$ is bounded and satisfies mixing LLT with zero drift,
we have
 $L_N\leq C'N$
(see Definition \ref{DfnMixLLT}).

Denote $\bar{N}:=\min_{i\neq j} |N_i-N_j|$.

Let $Z\subset X$ be defined by setting: $x\in Z$ iff $\min_{i\neq j}\|\tau_{N_i}(x)-\tau_{N_j}(x)\|\geq L_{\bar{N}}$. Using that $G$ is mixing of order $s$ with rate $\|t\|^{-\alpha}$, we get 
\begin{equation}\label{eq:z}
\int_Z \prod_{i=1}^{s}A_i(f^{N_i}x)\cdot\Big(\rho(\tau_{N_1}(x),\ldots,\tau_{N_{s}}(x))-\prod_{i=1}^{s}\nu(B_i)\Big)d\mu(x)\leq C\prod_{i=1}^s\|A_i\|_0\prod_{i=1}^s\|B_i\|_rL_{\bar{N}}^{-\alpha}.
\end{equation}
So it remains to estimate the above integral on $Z^c$. By definition, for every $x\in Z^{c}$,
there exists $i_x\neq j_x$ such that 
\begin{equation}
  \label{DefIJx}
  \|\tau_{N_{i_x}}(x)-\tau_{N_{j_x}}(x)\|=\min_{i\neq j}\|\tau_{N_i}(x)-\tau_{N_j}(x)\|\leq L_{\bar{N}}.
\end{equation}  
Let $Z_{ij}:=\{x\in Z^c\;:\; (i_x,j_x)=(i,j)\}$ (if there are several pairs satisfying \eqref{DefIJx} we take the smallest with respect to the
 lexicographic order).
Let $\{C_k\}_{k=1}^{\bar{M}}$ be a finite family of unit cubes centered at $\{c_k\}_{k=1}^{\bar{M}}$
in $\reals^d$ such that  $[-L_{\bar{N}},L_{\bar{N}}]^d=\bigcup_{k}C_k$. Then 
\begin{multline}
\label{eq:cs11}
\left|\int_{Z_{ij}} \prod_{ l=1}^s
A_l(f^{N_l}x)\cdot\Big(\rho(\tau_{N_1}(x),\ldots,\tau_{N_{s}}(x))-\prod_{i=1}^{s}\nu(B_i)\Big)d\mu(x)\right|=\\
\left|\sum_{k=1}^{\bar{M}} \int_{Z_{ij}} \prod_{ l=1}^s
A_l(f^{N_l}x)\cdot\Big(\rho(\tau_{N_1}(x),\ldots,\tau_{N_{s}}(x))-\prod_{i=1}^{s}\nu(B_i)\Big)\one_{C_{k}}(\tau_{N_i}(x)-\tau_{N_j}(x))d\mu(x)\right|.
\end{multline}

Using that $G$ is mixing of order $s$ with rate $\|t\|^{-\alpha}$, and
$$\min\{\sup_{C_k}\|t\|^{-\alpha}, 1\}\leq C\inf_{C_k}\|t\|^{-\alpha}$$
we get that  LHS of \eqref{eq:cs11} is bounded above by
{\small
\begin{equation}\label{eq:new1}
  C'\prod_{l=1}^s
  \left( \|A_l\|_0 \|B_l\|_r\right)
  \sum_{ k=1}^{\bar{M}}
  \left(\int_{C_k}
     \min\left\{\|t\|^{-\alpha}, 1\right\} dt\right)\;
       \mu(\{x\in X\;:\; \tau_{N_i}(x)-\tau_{N_j}(x)\in C_{k}\}).
\end{equation}}
Note that $\tau_{N_i}(x)-\tau_{N_j}(x)=\tau_{N_i-N_j}(f^{N_j}x)$. Hence, by the mixing LLT with $A_0=A_1=1$, $D_n\equiv 0$, we get (by preservation of measure)
$$
\mu(\{x\in X\;:\; \tau_{N_i}(x)-\tau_{N_j}(x)\in C_{j}\})
\leq 2L_{N_i-N_j}^{-d}
{\fp}(c_i / L_{\bar N})
<CL_{\bar{N}}^{-d}.
$$
Therefore, \eqref{eq:new1} (and hence also \eqref{eq:cs11}) is bounded above by (recall that $\alpha>d$)
$$
C''\prod_{ l=1}^s
  \left( \|A_l\|_0 \|B_l\|_r\right)
L_{\bar{N}}^{-d}.
$$
Summing over all $i,j$ and using \eqref{eq:z}, we get that the LHS of \eqref{eq77} is bounded by 
$$
C'''
\prod_{ l=1}^s
  \left( \|A_l\|_0 \|B_l\|_r\right)
L_{\bar{N}}^{-d}.
$$
This finishes the proof.
\end{proof}

\begin{theorem}
\label{ThEM+Drift}
  If $\tau$ has non zero drift and satisfies exponential large deviation bounds,
  and 
  $f$ and $G$ are exponentially mixing of order $s$
    then
$F$ is exponentially mixing of order $s.$ 
\end{theorem}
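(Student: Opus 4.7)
The plan is to adapt the proof of Theorem \ref{ThDrift}(a) to the multiple-mixing setting, replacing each use of pairwise exponential mixing of $G$ by its $s$-fold analogue. The role of the drift hypothesis is to force not only each $\tau_{N_i}$ but every pairwise difference $\tau_{N_i}-\tau_{N_j}$ to have large norm outside an exponentially small set, which is exactly what higher-order mixing of $G$ needs as input.

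First I reduce to product observables $\Phi_i(x,y)=A_i(x)B_i(y)$ with $A_i\in C^r(X)$, $B_i\in C^r(Y)$. Writing the correlation as in \eqref{eq77},
$$\zeta\Big(\prod_{i=1}^s \Phi_i\circ F^{N_i}\Big)=\int_X \prod_{i=1}^s A_i(f^{N_i}x)\cdot \rho(\tau_{N_1}(x),\ldots,\tau_{N_s}(x))\,d\mu(x),$$
with $\rho(t_1,\ldots,t_s)=\int_Y \prod_i B_i(G_{t_i}y)\,d\nu(y)$, I add and subtract $\prod_i\nu(B_i)$ inside $\rho$. This splits the correlation into a main term $\prod_i\nu(B_i)\int_X\prod_i A_i(f^{N_i}x)\,d\mu$, which by exponential mixing of order $s$ of $f$ equals $\prod_i\mu(A_i)\nu(B_i)+O(e^{-\delta\bar N})$ with $\bar N=\min_{i\neq j}|N_i-N_j|$, and a remainder $\cR$ built from $\rho-\prod_i\nu(B_i)$.

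To control $\cR$ I introduce the bad set
$$T^*=\bigcup_{i>j}\Big\{x\in X:\|\tau_{N_i-N_j}(f^{N_j}x)-(N_i-N_j)\mu(\tau)\|\geq (N_i-N_j)\|\mu(\tau)\|/2\Big\}.$$
Using the cocycle identity $\tau_{N_i}(x)-\tau_{N_j}(x)=\tau_{N_i-N_j}(f^{N_j}x)$, the $f$-invariance of $\mu$, the exponential large deviation bound with $\eps=\|\mu(\tau)\|/2$, and a union bound over the $\binom{s}{2}$ pairs, one obtains $\mu(T^*)\leq C s^2 e^{-\delta \bar N}$. On $T^*$ the integrand of $\cR$ is bounded in absolute value by $2\prod_i\|A_i\|_0\|B_i\|_0$, so the contribution of $T^*$ to $\cR$ is exponentially small. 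On $X\setminus T^*$ every pair obeys
$$\|\tau_{N_i}(x)-\tau_{N_j}(x)\|\geq (N_i-N_j)\|\mu(\tau)\|/2\geq \bar N\|\mu(\tau)\|/2,$$
so that exponential mixing of order $s$ for $G$ yields
$$|\rho(\tau_{N_1}(x),\ldots,\tau_{N_s}(x))-\prod_i\nu(B_i)|\leq C e^{-\kappa\|\mu(\tau)\|\bar N/2}\prod_i\|B_i\|_{C^r},$$
from which the corresponding contribution to $\cR$ is also exponentially small.

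Combining the two cases produces the desired bound for product observables, and Lemma \ref{lem:sob}, applied as in Corollary \ref{CrProductReduction}, promotes it to arbitrary $C^w(X\times Y)$ functions. There is no genuine obstacle beyond bookkeeping: the essential content is the observation that the drift hypothesis, fed through $\binom{s}{2}$ large deviation estimates for the differences $\tau_{N_i-N_j}\circ f^{N_j}$, delivers exponentially small control on $\min_{i\neq j}\|\tau_{N_i}(x)-\tau_{N_j}(x)\|$, and this is precisely the quantity that enters the $s$-fold mixing rate for $G$.
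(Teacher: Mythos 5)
Your proposal is correct and follows essentially the same route as the paper's proof: reduce to product observables, split off the fiber mean, introduce the bad set of points where some pairwise difference $\tau_{N_i}-\tau_{N_j}$ fails the large-deviation bound, and use $s$-fold exponential mixing of $G$ on the complement. The only cosmetic difference is that you spell out the cocycle identity $\tau_{N_i}(x)-\tau_{N_j}(x)=\tau_{N_i-N_j}(f^{N_j}x)$ and the $f$-invariance of $\mu$ behind the union bound, which the paper compresses into the parenthetical ``(and preservation of measure)''.
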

\begin{proof}
 For $i=1,2,\ldots, s$, let $\Phi_i(x,y)=A_i(x)B_i(y)$ be a $C^r$ function on $X\times Y$. Let $\rho(t_1,\ldots, t_s):=\int_Y \prod_{i=1}^sB_i(G_{t_i}y)d\nu(y)$ (with $t_1=0$). Since $G$ is exponentially mixing, there exist a constant $C_1>0$ and $\kappa>0$ such that \begin{equation}\label{e-7.1}|\rho(t_1,\ldots,t_s)-\prod_{i=1}^s\nu(B_i)|\le C_1 \|B_1\|_{C^r} \|B_2\|_{C^r}e^{-\kappa \delta(t_1,\ldots,t_s)} .\end{equation}
Fix $0=N_1\leq N_2\leq \ldots\leq N_s$. 
 We again use the decomposition \eqref{eq77}.
By exponential mixing of order $s$ of $f$, the second term in \eqref{eq77}
is exponentially close to  $\prod_{i=1}^{s}\nu(B_i)\prod_{i=1}^{s}\mu(A_i)$, and hence we only need to estimate the first term.

Let $T_{ij}:=\{x\in X:\; \|\tau_{N_i}(x)-\tau_{N_j}(x)-(N_i-N_j)
\mu(\tau)\|\geq (N_i-N_j)\|\mu(\tau)\|/2 \}$.
Let $\DS \bar{T}=\bigcup_{i\neq j}T_{ij}$. By exponential large deviation bounds (and preservation of measure), $\DS \mu(\bar{T})\leq s^2 \max_{ij} \mu(T_{ij})\leq Ce^{-\delta N}$. Therefore it is enough to bound the integral  of  the  first term in the RHS on $X\setminus \bar{T}$.
By exponential mixing of $G$,
 $$
\int_{X\setminus \bar{T}} \prod_{i=1}^{s}A_i(f^{N_i}x)\cdot\Big(\rho(\tau_{N_1}(x),\ldots,\tau_{N_{s}}(x))-\prod_{i=1}^{s}\nu(B_i)\Big)d\mu(x) 
$$$$
\leq C\prod_{i=1}^s\|A\|_0\prod_{i=1}^s\|B_i\|_r \min_{x\notin \bar{T}}e^{-\kappa \delta(\tau_{N_1}(x), \ldots,\tau_{N_s}(x))}. $$
By the definition of $\bar{T}$,
 $\DS
\delta(\tau_{N_1}(x), \ldots,\tau_{N_s}(x))\geq  \frac{\|\mu(\tau)\|}{2}\min_{i\neq j}|N_i-N_j|
$
completing the proof.
\end{proof}

Let $n_1 \leq n_2\leq \dots \leq n_s$ be a $s$ tuple.
A partition $\fP=\cP_1\cup \cP_2\cup\dots \cup \cP_{k}$ of a set 
$\{n_1, n_2, \dots n_{s}\}$ (where an item may be listed more than once)
is called {\it social} if for each $j\in \{1,\dots, 
k\}$, 
$\Card(\cP_j)>1.$ 
An element $n_j$ is called {\it forward free (backward free)} for partition $\fP$ if it is
the smallest (respectively, the largest) in its atom. We call $n_j$ forward (or backward) fixed
if it is not forward (backward) free. We let $F^\pm$ to be the set of all forward (or backward) fixed 
elements.
Let 
$$\kappa^{\pm}(\fP)=
\prod_{n_j\in F^{\pm}} L_{n_j-n_{j-1}}, \quad
$$
For $\fP=(P_1,\ldots,P_k)$, let $(n_{i_\ell})_{\ell=1}^k$ be the collection of forward free elements, i.e. $n_{i_\ell}$ is the smallest element of $P_\ell$. Analogously we define $(n_{j_\ell})_{\ell=1}^k$ to be the collection of backward free elements. Notice that we have the following formula for $\kappa^{\pm}(\fP)$:
\begin{equation}\label{eq:ff}
\kappa^+(\fP)=\Big(\prod_{j=1}^s L_{n_j-n_{j-1}}\Big)\cdot \Big(\prod_{\ell=1}^k L_{n_{i_\ell}-n_{i_\ell-1}}\Big)^{-1},
\end{equation}
with $n_0=0$ and analogously
\begin{equation}\label{eq:ff2}
\kappa^-(\fP)=\Big(\prod_{j=1}^s L_{n_j-n_{j-1}}\Big)\cdot \Big(\prod_{\ell=1}^k L_{n_{j_\ell+1}-n_{j_\ell}}\Big)^{-1},
\end{equation}
with $n_{s+1}:=n_1+n_s$.

We have the following 

\begin{definition}
  $\tau$ satisfies {\em anticoncentration large deviation bound of order $s$} if
  there exist a constant $K$ and a decreasing function $\Theta$ such that
  $\DS \int_1^\infty \Theta(r) r^{d}<\infty$,
and
for any unit cubes $C_1, C_2, \dots, C_s$ centered at $c_1, c_2, \dots c_s$ 
$$ \mu\left(x: \tau_{n_j}\in C_j \text{ for } j=1,\dots, s\right)
\leq K \left(\prod_{j=1}^s L_{n_{j}-n_{j-1}}^{-d}\right) 
\Theta\left(\max_{j} \frac{\|c_j-c_{j-1}\|}{L_{n_j-n_{j-1}}}\right) $$
\end{definition}  

\begin{remark}
For $s=2$ anticoncentration large deviation bounds were considered in \cite{DN-Ren}. 
\end{remark}

\begin{theorem}\label{thm:social}
If $\tau$ satisfies anticoncentration large deviation bounds  of order $s$ and 
$f$ and $G$ are exponentially mixing of order $s$, then
\begin{equation}
\label{MultMixProd}
\left| \int \left(\prod_{j=1}^s H_j(F^{n_j} z) \right) d\zeta(z)-\prod_{j=1}^s \zeta(H_j)\right|\leq 
C \prod_{j=1}^s \|H_j\|_{C^r} \left( \min_\fP \kappa (\fP) \right)^{-d}
\end{equation}
where 
$$ \kappa(\fP)=
\max
\left\{\kappa^+(\fP), \kappa^-(\fP)\right\}. $$
and the minimum 
in \eqref{MultMixProd}
is taken over all social partitions of $\{n_1,\dots n_s\}.$
\end{theorem}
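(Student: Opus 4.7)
The plan is to decompose $X$ according to the clustering pattern of $(\tau_{n_1}(x),\dots,\tau_{n_s}(x))$ at a scale $M$ and combine exponential mixing of $G$ for well-separated cocycle values with the anticoncentration large deviation bound of order $s$ for clustered ones. By Lemma \ref{lem:sob}, reduce to product observables $H_j(x,y)=A_j(x)B_j(y)$, and set $\rho(t_1,\dots,t_s)=\int_Y\prod_j B_j(G_{t_j}y)\,d\nu(y)$, so that the correlation of interest is
\begin{equation*}
\int_X\prod_j A_j(f^{n_j}x)\,\rho(\tau_{n_1}(x),\dots,\tau_{n_s}(x))\,d\mu(x).
\end{equation*}
Subtracting $\prod_j\nu(B_j)$ inside $\rho$ leaves, by exponential mixing of $f$ of order $s$, a term $\prod_j\zeta(H_j)+O(e^{-c\bar N})$ with $\bar N=\min_{i\ne j}|n_i-n_j|$, so it suffices to bound the residual integral $E$ where $\rho$ is replaced by $\rho-\prod_j\nu(B_j)$.

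Let $\lambda>0$ be the exponential mixing rate for $G$, set $\kappa^\ast=\min_\fP\kappa(\fP)$, and fix a threshold $M=(d/\lambda)\log\kappa^\ast$ so that $e^{-\lambda M}\le(\kappa^\ast)^{-d}$. For each $x\in X$ define a partition $\fP(x)$ of $\{n_1,\dots,n_s\}$ as the transitive closure of $n_i\sim_x n_j\iff\|\tau_{n_i}(x)-\tau_{n_j}(x)\|\le M$, and write $X=\bigsqcup_\fP X_\fP$. When $\fP$ is not social, it contains a singleton atom $\{n_{j_0}\}$ for which $\tau_{n_{j_0}}(x)$ is $M$-separated from every other cocycle value; applying exponential mixing of $G$ of order $s$ one peels $\nu(B_{j_0})$ off $\rho$ with error $O(e^{-\lambda M})$. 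Iterating over all singletons reduces, on $X_\fP$, the quantity $\rho(\tau(x))-\prod_j\nu(B_j)$ either to an $O(e^{-\lambda M})$ term (when $\fP$ is the trivial all-singleton partition) or to a bounded residue supported only on the non-singleton atoms of $\fP$, to be handled by the social-partition estimate below.

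For each social partition $\fP=\cP_1\sqcup\dots\sqcup\cP_k$, the contribution of $X_\fP$ is at most $\prod_j\|A_j\|_0\|B_j\|_0\cdot\mu(X_\fP)$, so the crucial step is the measure bound
\begin{equation*}
\mu(X_\fP)\le C\kappa(\fP)^{-d}
\end{equation*}
(up to polylogarithmic factors in $\kappa^\ast$ absorbed into $C$). Cover $X_\fP$ by cube configurations $(C_1,\dots,C_s)$ with centers $c_j$ so that within each atom the $c_j$'s lie in an $sM$-ball around a common atom center and distinct atom centers are $M$-separated. The anticoncentration LD bound of order $s$ gives
\begin{equation*}
\mu(\tau_{n_j}\in C_j\ \forall j)\le K\prod_{j=1}^s L_{n_j-n_{j-1}}^{-d}\,\Theta\!\left(\max_j\frac{\|c_j-c_{j-1}\|}{L_{n_j-n_{j-1}}}\right).
\end{equation*}
Root each atom either at its forward-free (smallest) or backward-free (largest) element: summing the position of each root freely over $\reals^d$ uses $\int\Theta(r)r^d\,dr<\infty$ to yield a finite constant and to cancel the $L^{-d}$ factors associated with the corresponding gaps, while within-atom positions contribute $M^d$ each. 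Forward rooting leaves exactly the $\kappa^+(\fP)^{-d}$ factors uncancelled, and backward rooting leaves $\kappa^-(\fP)^{-d}$; taking whichever is smaller gives $\mu(X_\fP)\lesssim\kappa(\fP)^{-d}$. Summing over the finitely many social partitions (their count depends only on $s$) yields \eqref{MultMixProd}.

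The main obstacles are (i) the per-partition measure bound and in particular the appearance of $\kappa(\fP)=\max(\kappa^+,\kappa^-)$, and (ii) cleanly absorbing the non-social partition contributions into the social ones. For (i), the key observation is that the $\Theta$-factor in the ACLD bound depends only on the maximum of normalized consecutive displacements, so one is free to re-root each atom at either endpoint before summing; the two choices cancel exactly the $\kappa^+$ and the $\kappa^-$ blocks respectively, so one invokes whichever is more efficient. For (ii), each non-social $\fP$ is compared with the social $\tilde\fP$ obtained by merging singletons into adjacent non-singleton atoms, using that the peel-off error $e^{-\lambda M}\le(\kappa^\ast)^{-d}$ dominates the non-social residue. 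The shift-invariance provided by measure-preservation of $f$ is essential for both steps, and the residual polylogarithmic factors in $M$ are absorbed by the choice of $M$.
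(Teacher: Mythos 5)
Your overall strategy is close in spirit to the paper's but uses a genuinely different decomposition, and this difference creates a real gap. The paper bounds $\int|\rho(\tau_{n_1},\dots,\tau_{n_s})|\,d\mu$ by slicing $X$ into dyadic level sets $D_m=\{x:|\rho(\tau(x))|\sim 2^{-m}\}$, shows $D_m\subset A_m=\{\Delta(\tau(x))\le C_\eta m\}$ via Lemma \ref{lem:as}, covers $A_m$ by social partitions, and pays a \emph{polynomial} factor $m^{sd+d}$ in the measure bound for $A_m$; the geometric weight $2^{-m}$ coming from $|\rho|$ then makes $\sum_m 2^{-m}m^{sd+d}$ converge to a universal constant, so no extraneous factor survives. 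You instead fix a single clustering scale $M\sim\log\kappa^\ast$ depending on the $n_j$'s, split $X$ by the resulting partition $\fP(x)$, and on the social pieces $X_\fP$ you use only the trivial sup bound $\|\rho-\prod\nu(B_j)\|_\infty\cdot\mu(X_\fP)$.

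This is where the argument breaks. Covering the within-atom positions by unit cubes at scale $M$ forces a factor of order $M^{(s-k)d}\sim(\log\kappa^\ast)^{(s-k)d}$ in $\mu(X_\fP)$, and you have no compensating geometric decay because you only use $\|\rho\|_\infty$ on the social pieces. The resulting bound is $C\kappa(\fP)^{-d}(\log\kappa^\ast)^{(s-k)d}$, which is \emph{not} $\le C'\kappa(\fP)^{-d}$ for a constant $C'$ uniform in $n_1,\dots,n_s$: the polylogarithm grows with $\kappa^\ast$, which itself depends on the $n_j$'s. Claiming it is ``absorbed into $C$'' or ``absorbed by the choice of $M$'' is circular—choosing a larger $M$ worsens the polylog, and a smaller $M$ worsens the peel-off error $e^{-\lambda M}$, so no choice of $M$ in a single-scale scheme eliminates the loss. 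The fix is precisely to replace the single scale $M$ by a decomposition over scales $m=0,1,2,\dots$, weighted by the corresponding size of $|\rho|$, which is what the paper's $D_m$ argument accomplishes; alternatively one could accept the loss and prove only the weaker estimate $C_\eps(\min_\fP\kappa(\fP))^{-d+\eps}$.

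A secondary but smaller point: your ``peel off $\nu(B_{j_0})$'' step for non-social $\fP$ implicitly requires a decoupling sharper than the literal statement of exponential $s$-mixing, which controls the full factorization at rate $e^{-\eta\min_{i\neq j}\|t_i-t_j\|}$ but not directly the removal of a single far-away variable. What you actually need is the upgrade to the rate $e^{-\eta\max_j\min_{i\neq j}\|t_i-t_j\|}$; this is exactly Lemma \ref{lem:as} (from \cite{BG19}), which the paper invokes explicitly. Once cited, a single application of that lemma already gives $|\rho-\prod\nu(B_j)|\le Ce^{-\eta M}$ on every non-social $X_\fP$, so the iterated peeling is unnecessary. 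Your rooting argument for $\kappa^\pm$ and the use of $\int\Theta(r)r^d\,dr<\infty$ to sum out the free root positions matches the paper.
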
 
We first recall the following result, which simplifies our analysis.

\begin{lemma}(\cite{BG19})\label{lem:as} If $G$ is exponentially mixing of order $s$, then for some $\eta>0$
\begin{equation}
\label{Eq:as}
 \left| \nu\left(\prod_{j=1}^s B_j(G_{t_j} y)\right) -\prod_{j=1}^s \nu(B_j) \right|\leq
Ce^{-\eta\Delta(t_1, \dots t_s)} \prod_{j=1}^s \|B_j\|_\BAN ,
\end{equation}
where
$$ \Delta(t_1, \dots t_s)=\max_{j}\min_{i\neq j} \|t_i-t_j\|. $$
\end{lemma}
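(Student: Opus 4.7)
My plan is to induct on $s$. The base case $s = 2$ is immediate since $\delta = \Delta$ in this case, so the conclusion is precisely the hypothesis. For the inductive step, I pick $j^*$ realising the maximum in the definition of $\Delta$, so that $\|t_i - t_{j^*}\| \ge \Delta$ for every $i \neq j^*$. By the $G$-invariance of $\nu$ I translate and assume $t_{j^*} = 0$, whence $\|t_i\| \ge \Delta$ for all $i \neq j^*$. Setting $\Phi(y) := \prod_{i \neq j^*} B_i(G_{t_i} y)$, the natural decomposition
\begin{equation*}
\nu(B_{j^*}\Phi) - \prod_{j=1}^s \nu(B_j) = \bigl[\nu(B_{j^*}\Phi) - \nu(B_{j^*})\nu(\Phi)\bigr] + \nu(B_{j^*})\bigl[\nu(\Phi) - \prod_{i \neq j^*}\nu(B_i)\bigr]
\end{equation*}
splits the task into a two-factor correlation (first bracket) and an $(s-1)$-fold correlation (second bracket).

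For the second bracket, I apply the inductive hypothesis at order $s-1$ to the sub-tuple $\{t_i : i \neq j^*\}$. If the analogous ``isolation'' quantity $\Delta^{(s-1)}$ for the sub-tuple is at least $\Delta$, this immediately yields the required $e^{-\eta \Delta}$ bound. If not, the sub-tuple must contain a close pair $t_i,t_k$ with $\|t_i-t_k\|<\Delta$, which I then merge into a composite observable $\tilde{B}(y) := B_i(y)\, B_k(G_{t_k-t_i}y)$ of $\BAN$-norm controlled by $\|B_i\|_\BAN \|B_k\|_\BAN$ (via the Leibniz rule and smoothness of $G$). This merging effectively reduces the order of the correlation, and iterating terminates after at most $s-1$ steps.

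For the first bracket, I use exponential $2$-mixing of $G$ (which follows from exponential $s$-mixing by taking the remaining $s-2$ observables equal to $\one$) between $B_{j^*}$ and the composite $\Phi$. Since all the times appearing in $\Phi$ are at distance $\ge \Delta$ from $t_{j^*} = 0$, the effective ``time separation'' between $B_{j^*}$ and $\Phi$ is $\ge \Delta$, and I expect a decay of the form $e^{-\eta'\Delta}\|B_{j^*}\|_\BAN \|\Phi\|_\BAN$. The $\BAN$-norm of $\Phi$ is again controlled by $\prod_{i \neq j^*}\|B_i\|_\BAN$ by the Leibniz rule. Combined with the second-bracket estimate, this yields the required bound $C e^{-\eta \Delta}\prod_{j=1}^s\|B_j\|_\BAN$ after absorbing constants and shrinking $\eta$ as needed.

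\textbf{Main obstacle.} The principal difficulty lies in the first bracket: $B_{j^*}$ and $\Phi$ both live at the ambient time $0$, so the ``separation'' $\Delta$ is not literally a time-shift between two observables but rather encodes where the spectral mass of $\Phi$ sits in $G_t$-time. Rigorously extracting $e^{-\eta \Delta}$ decay in the first bracket requires expanding $\Phi$ via the inclusion-exclusion identity $B_i = \nu(B_i) + (B_i-\nu(B_i))$ and applying the $\delta$-mixing hypothesis to each resulting subproduct: in every nonvanishing term, $B_{j^*}$ is paired with at least one zero-mean $B_i$ whose time $t_i$ satisfies $\|t_i\| \ge \Delta$, so the $\delta$-bound for the sub-correlation is itself at least $\Delta$. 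Tracking this carefully through all subsets is the key combinatorial step.
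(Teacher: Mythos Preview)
The paper does not prove this lemma; it is quoted from \cite{BG19} without argument. So I assess your proposal on its own merits.

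Your inductive scheme is natural, but both brackets have genuine gaps.

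\textbf{First bracket.} The claim in your ``Main obstacle'' paragraph --- that after inclusion--exclusion each nonvanishing sub-correlation has $\delta\ge\Delta$ --- is false. A typical term involves $B_{j^*}$ at time $0$ together with centred observables $\tilde B_i$ at times $t_i$, $i\in S$; the pairwise distances $\|t_i-t_k\|$ for $i,k\in S$ can be arbitrarily small, so the $\delta$ for that sub-tuple can be arbitrarily small and the $\delta$-hypothesis gives nothing. One can try to use the inductive $\Delta$-bound instead (which \emph{is} $\ge\Delta$, since $j^*$ is isolated in every such sub-tuple), but this only applies when $|S|+1<s$; the top term $|S|=s-1$ is again an $s$-fold correlation and the induction does not close. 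Equivalently: if all $B_i$ are already centred, your inclusion--exclusion is vacuous and the first bracket simply reproduces the quantity you are trying to bound.

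\textbf{Second bracket (merging).} When you form $\tilde B(y)=B_i(y)\,B_k(G_{t_k-t_i}y)$, the Leibniz rule gives $\|\tilde B\|_{C^r}\le C\,e^{\lambda r\|t_k-t_i\|}\|B_i\|_{C^r}\|B_k\|_{C^r}$, where $\lambda$ is the top expansion rate of $G$. You only know $\|t_k-t_i\|<\Delta$, so the norm cost can be of order $e^{\lambda r\Delta}$, which competes with and may destroy the target $e^{-\eta\Delta}$ decay unless $\eta>\lambda r$; iterated merging compounds this. The issue is absent when $\|\cdot\|_\BAN$ is $G$-invariant (as in the homogeneous settings treated in \cite{BG19}), but for a general smooth $\reals^d$-action on a compact manifold with $\BAN=C^r$ it is a real obstruction that your sketch does not address.

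In short, the first bracket is not controlled by the $\delta$-hypothesis in the way you assert, and the merging repair for the second bracket carries an uncontrolled exponential norm loss. The argument in \cite{BG19} is organised so as to avoid composing observables with large-time shifts inside the norm; reproducing the result in the generality stated requires a different decomposition than the one you sketch.
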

With the above lemma, we prove Theorem \ref{thm:social}
\begin{proof}[Proof of Theorem \ref{thm:social}]By Lemma \ref{lem:sob} it is enough to show the statement for  $H_j=A_j\times B_j\in C^r(M)$.  Let 
$$
\rho(t_1,\ldots, t_{s}):=\nu\left(\prod_{j=1}^{s} B_j(G_{t_j} y)\right) -
\prod_{j=1}^{s} \nu(B_j).
$$
Then
$$
 \int \left(\prod_{j=1}^{s} H_j(F^{n_j} z) \right) d\zeta(z)=$$
\begin{equation}
\label{eq:socialdec}
\int \left(\prod_{j=1}^{s} A_j(f^{n_j}x)\right)\rho\Big(\tau_{n_1}(x),\ldots,\tau_{n_{s}}(x)\Big)d\mu(x)+   \left(\prod_{j=1}^{s} \nu(B_j)\right) \mu\left(\prod_{j=1}^{s} A_j(f^{n_j} x)\right).
\end{equation}
Since $f$ is exponentially mixing 
of order $s$, 
\begin{equation}\label{eq:1b}
\left|\mu\left(\prod_{j=1}^{s} A_j(f^{n_j} x)\right)-\prod_{j=1}^{s} \mu(A_j)\right|\leq C \prod_{j=1}^{s}\|A_j\|_r e^{-\eta \Delta},
\end{equation}
where $\Delta=\Delta(n_1,\ldots,n_s)$.

Let $\cP$ be the following partition of $n_1<\ldots <n_{s}$. Let $i_1\in \{2,\ldots s-1\}$ 
be the smallest index $i$ such that $|n_{i}-n_{i-1}|>\Delta$. Then the first atom of $\cP$ is $\{n_0,\ldots, n_{i_1-1}\}$. Notice that $|n_{i_1}-n_{i_1+1}|
 \leq 
\Delta$ by the definition of $\Delta$. Now recursively, let $i_{k+1}\in \{i_k+1,\ldots, s\}$ be the smallest 
index $i$ such that $|n_{i}-n_{i-1}|>\Delta$.
Then the $(k+1)$-th atom of $\cP$ is $\{n_{i_k},\ldots, n_{i_{k+1}-1}\}$. We continue until we partition all of $n_1<\ldots n_{s}$. Then by the definition of $\Delta$, every atom of $\cP$ has at least two elements, and so $\cP$ is social. Moreover, all elements  in one atom
are at distance at most $s\Delta$ (since the number of elements is $\leq s$). Using that $\tau$ is bounded (and so  $|L_n|<Cn$) together with \eqref{eq:ff}
and \eqref{eq:ff2}, we conclude
$$
\min\{\kappa^+(\cP)^{-d},\kappa^+(\cP)^{-d}\}\geq \Big(\prod_{j=1}^s L_{n_j-n_{j-1}}\Big)^{-d}\gg [s\Delta]^{-sd}\geq C \Delta^{-sd}\geq Ce^{-\eta \Delta}.
$$

Combining this estimate with \eqref{eq:1b} we find that the 
second term in \eqref{eq:socialdec} 
equals $\DS \prod_{j=1}^s \zeta(H_j)$ up to an error which is
bounded by the
RHS of \eqref{MultMixProd}.
It remains 
to show that 
$$
\left|\int \left(\prod_{j=1}^{s} A_j(f^{n_j}x)\right)\rho\Big(\tau_{n_1}(x),\ldots,\tau_{n_{s}}(x)\Big)d\mu(x)\right|\leq C \prod_{j=1}^{s} \|A_j\times B_j\|_{C^r} 
\left(\min_{\fP} \kappa(\fP)\right)^{-d},
$$
which will follow by showing that 
$$
\int\Big|\rho(\tau_{n_1}(x),\ldots,\tau_{n_{s}}(x))\Big|d\mu(x)\leq C \prod_{j=1}^{s} \|B_j\|_{C^r} 
 \left(\min_{\fP} \kappa(\fP)\right)^{-d}.
$$
Let $C_0:=\prod_{j=1}^{s} \|B_j\|_{C^r}$ and let 
$D_m:=\Big\{x\;:\; |\rho(\tau_{n_1}(x),\ldots,\tau_{n_{s}}(x)) |\in [C_02^{-m},C_0 2^{-m+1})\Big\}$. Then 
\begin{equation}\label{int:a}
\int\Big|\rho(\tau_{n_1}(x),\ldots,\tau_{n_{s}}(x))\Big|d\mu(x)\leq 2C_0\sum_{m\geq 0}\frac{1}{2^m}\mu(D_m).
\end{equation}
We will estimate the measure of $D_m$. Note that by Lemma \ref{lem:as}, for some $C_\eta\in \N$,
$$
D_m\subset A_m:=\{x\;:\; \Delta(\tau_{n_1}(x),\ldots,\tau_{n_{s}}(x))\leq C_\eta m\}.
$$
We will therefore give an upper bound on the measure of $A_m$. By the definition of $\Delta$ it follows that there exists a social partition $\fP=(P_1,\ldots, P_k)$ of $n_1<n_2<\ldots<n_s$ such that for any atom of $\fP$ and any two $n_{i},n_j$ in the same atom we have 
\begin{equation}\label{eq:tau}
|\tau_{n_i}(x)-\tau_{n_j}(x)|<C_\eta s m.
\end{equation}
Let $A_{m,\fP}\subset A_m$ be the set of $x$ for which $\fP$ is 
 social partition of 
$n_1<n_2<\ldots<n_s$ satisfying \eqref{eq:tau}. Then 
$$
A_m=\bigcup_{\fP\text{ social}}A_{m,\fP},
$$
and so we will estimate the measure of $A_{m,\fP}$.

Let $\{\tilde{C}_j\}$ be a disjoint cover of $\reals^d$ by cubes of side length $C_\eta s \cdot m$ centered and $\tilde{c}_j$. Note that by the anticoncentration large deviation bounds of order $s$ (decomposing $\tilde{C}_j$ into unit cubes), 
\begin{equation}\label{eq:sad}
\mu\left(x: \tau_{n_j}(x)\in \tilde{C}_j \text{ for } j=1,\dots, s\right)
\leq K'(sm)^{sd} \left(\prod_{j=1}^s L_{n_{j}-n_{j-1}}^{-d}\right) 
\Theta\left(\max_{j} \frac{\|\tilde{c}_j-\tilde{c}_{j-1}\|}{m\cdot L_{n_j-n_{j-1}}}\right).
\end{equation}
It follows by the definition of $\fP$ and \eqref{eq:tau} that 
all the $\{\tau_{n_j}(x)\}_{n_j\in P_\ell}$ belong to one cube $\tilde{C}_{r_\ell}$.  Below, we use the notation $\tau_{P_\ell}(x)\in C_{r_\ell}$ which means that for every $n_j\in P_\ell$, $\tau_{n_j}(x)\in  \tC_{r_\ell}$.
Therefore, we have
$$
\mu(A_{m, \fP})\leq \sum_{r_1,\ldots, r_k}\mu(\{x\;:\; \tau_{P_\ell}(x)\in  \tC_{r_\ell},  \ell \leq k \}. 
$$
Let $n_{i_\ell}$ (and $n_{j_\ell}$) be the smallest (the largest) element of $P_\ell$, $\ell\leq k$. Below we will argue with $(n_{i_\ell})$ (analogous reasoning can be done for $(n_{j_\ell})$). Let $u(\ell)$ be such that $n_{i_\ell-1}\in P_{u(\ell)}$.
By \eqref{eq:sad}, monotonicity of $\Theta$ and the above
discussion (using that $n_{i_\ell}$ and $n_{i_\ell-1}$ are in different atoms), we obtain
$$
\mu(\{x\;:\; \tau_{P_\ell}(x)\in C_{r_\ell},  \ell \leq k \}\leq K'm^{sd}
\left(\prod_{j=1}^s L_{n_{j}-n_{j-1}}^{-d}\right)\Theta\left(\max_{\ell\leq k} \frac{\|\tilde{c}_{r_\ell}-\tilde{c}_{r_{u(\ell)}}\|}{m\cdot L_{n_{i_\ell}-n_{i_\ell-1}}}\right).
$$
Therefore 
$$
\mu(A_{m,\fP})\leq K'm^{sd}\left(\prod_{j=1}^s L_{n_{j}-n_{j-1}}^{-d}\right)\sum_{r_1,\ldots r_k}\Theta\left(\max_{\ell\leq k} \frac{\|\tilde{c}_{r_\ell}-\tilde{c}_{r_{u(\ell)}}\|}{m\cdot L_{n_{i_\ell}-n_{i_\ell-1}}}\right).
$$
Note that 
$$
\sum_{r_1,\ldots r_k}\Theta\left(\max_{\ell\leq k} \frac{\|\tilde{c}_{r_\ell}-\tilde{c}_{r_{u(\ell)}}\|}{m\cdot L_{n_{i_\ell}-n_{i_\ell-1}}}\right)\leq$$$$
 \sum_{\ell}\Theta(\ell)\cdot\Big|\{(r_1,\ldots, r_k)\;:\; \|\tilde{c}_{r_\ell}-\tilde{c}_{r_{u(\ell)}}\|\leq \ell\cdot m\cdot L_{n_{i_\ell}-n_{i_\ell-1}}\text{ for every } \ell\leq k \}\Big|
$$$$
\leq \sum_{\ell}\Theta(\ell)\ell^d\cdot m^d\cdot \Big(\prod_{\ell\leq k}L_{n_{i_\ell}-n_{i_\ell-1}}\Big)^d.
$$
Therefore, by the decay assumptions on $\Theta$ and \eqref{eq:ff},
$$
\mu(A_{m,\fP})\leq K'm^{sd+d}\left(\prod_{j=1}^s L_{n_{j}-n_{j-1}}^{-d}\right)\cdot\Big(\prod_{\ell\leq k}L_{n_{i_\ell}-n_{i_\ell-1}}\Big)^d=K'm^{sd+d}\kappa^{+}(\fP)^{-d}.
$$
Analogously we have that 
$$
\mu(A_{m,\fP})\leq K'm^{sd+d}\kappa^+(\fP)^{-d}.
$$
Therefore, 
$$
\mu(A_{m,\fP})\leq K'm^{sd+d}\kappa(\fP)^{-d}.
$$
Using that $A_m=\bigcup_{\fP}A_{m,\fP}$, we get,
$$
\mu(A_{m})\leq K'C_sm^{sd+d}(\min_\fP\kappa(\fP))^{-d},
$$
for some constant $C_s>0$. Summarizing, by \eqref{int:a} (since $D_m\subset A_m$), we get
$$
\int\Big|\rho(\tau_{n_1}(x),\ldots,\tau_{n_{s}(x)})\Big|d\mu(x)\leq 2K'C_s\prod_{j=1}^{s} \|B_j\|_{C^r} (\min_\fP\kappa(\fP))^{-d}\sum_{m\geq 0}2^{-m}m^{sd+d}\leq$$$$ C_{s,d}\prod_{j=1}^{s} \|B_j\|_{C^r}(\min_\fP\kappa(\fP))^{-d}.
$$
This finishes the proof.
\end{proof}

\section{Central Limit Theorem}
\label{ScCLT}

Let $H(x,y)$ be a $C^r$ function
not cohomologous to a constant function. Let $\DS \Sigma_N(H):=\sum_{n=0}^{N-1} H(F^n(x,y))$. 
Assume that $\zeta(H)=0$. Let $Z=X\times Y$.

\begin{theorem}
\label{ThMultMixCLT}
Suppose that $F$ satisfies \eqref{MultMixProd} and $\DS \sum_{n=1}^\infty L_n^{-d}$ 
converges.
Then $\DS \frac{\Sigma_N(H)}{\sqrt N}$ converges as $N\to \infty$ to the normal distribution with zero mean and variance $\sigma^2$ given by formula \eqref{GKSigma} below. 
\end{theorem}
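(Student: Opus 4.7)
The plan is to establish the CLT by the method of moments, showing that $\EXP[(\Sigma_N(H)/\sqrt{N})^k]$ converges to the $k$-th moment of $\cN(0,\sigma^2)$ for every $k\geq 1$, with
\[
\sigma^2 = \zeta(H^2) + 2\sum_{k=1}^\infty \zeta(H\cdot H\circ F^k)
\]
being the expected Green--Kubo variance that will be formula \eqref{GKSigma}. The backbone of the argument is the multiple-mixing bound \eqref{MultMixProd} combined with a clustering analysis of sorted tuples $(n_1\le\dots\le n_k)$.

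First I would handle the variance. Applying \eqref{MultMixProd} with $s=2$, the only social partition of $\{0,k\}$ is $\{\{0,k\}\}$, so $\kappa(\fP)=L_k$ and hence $|\zeta(H\cdot H\circ F^k)|\le C\|H\|_{C^r}^2\, L_k^{-d}$. The hypothesis $\sum_n L_n^{-d}<\infty$ makes the Green--Kubo series absolutely convergent, and the standard manipulation
\[
\EXP[\Sigma_N(H)^2] = N\zeta(H^2) + 2\sum_{k=1}^{N-1}(N-k)\,\zeta(H\cdot H\circ F^k)
\]
together with dominated convergence yields $N^{-1}\EXP[\Sigma_N^2]\to\sigma^2$.

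For $k\ge 3$ I would expand $\EXP[\Sigma_N^k]$ as a sum over ordered tuples and, after restricting to sorted tuples (repeats give lower-order contributions), decompose the index set according to the ``cluster structure'' determined by a threshold $\Delta_N\to\infty$ with $\Delta_N=o(\sqrt{N})$: consecutive indices $n_{j-1},n_j$ lie in the same cluster iff $g_j:=n_j-n_{j-1}\le\Delta_N$. Using \eqref{MultMixProd}, a tuple with a singleton cluster $\{n_j\}$ forces every social partition to pair $n_j$ across a gap of size $>\Delta_N$, injecting a factor of at least $L_{\Delta_N}^{-d}$ into $\kappa(\fP)^{-d}$; summability of $L_n^{-d}$ in the remaining variables then makes the total singleton-cluster contribution $o(N^{k/2})$. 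For tuples in which every cluster has size $\geq 2$, approximate factorization across the large gaps (again from \eqref{MultMixProd}) gives
\[
\zeta\Bigl(\prod_{j=1}^k H\circ F^{n_j}\Bigr) \approx \prod_{\ell=1}^r \zeta\Bigl(\prod_{n_j\in C_\ell} H\circ F^{n_j}\Bigr),
\]
so clusters of size $s\geq 3$ contribute a within-cluster factor bounded by a summable expression in the internal gaps (by the $s=k$ case of \eqref{MultMixProd} with social partition $\{C_\ell\}$), while only $O(N)$ cluster positions are available; a book-keeping argument shows that any configuration containing a cluster of size $\geq 3$ contributes $o(N^{k/2})$. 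The leading term thus comes only from configurations with exactly $k/2$ clusters of size 2, yielding the Wick-sum
\[
\EXP[\Sigma_N^{2m}] = \frac{(2m)!}{2^m m!}\,\sigma^{2m}\,N^m + o(N^m),
\qquad \EXP[\Sigma_N^{2m+1}]=o(N^{m+1/2}).
\]

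The main obstacle will be the cluster-size-$\geq 3$ bookkeeping: one must show that the contribution of a single sorted tuple whose cluster structure contains an atom of size $s\geq 3$, summed over positions, is indeed $o(N^{k/2})$. The crucial input is that such a cluster forces $s-1$ gaps into $\kappa(\fP)^{-d}$, each summable to a constant by $\sum L_n^{-d}<\infty$, while only one free translation parameter (the cluster location) contributes a factor $N$; compared with pair clusters, which give a factor $N\cdot \sum_g L_g^{-d}$ per pair, any cluster of size 3 ``loses'' one factor of $\sqrt N$ relative to the normalization, and by induction clusters of size $s\geq 3$ lose at least one factor of $\sqrt N$, which closes the argument.
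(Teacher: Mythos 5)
Your proposal follows essentially the same route as the paper: method of moments, expand $\zeta(\Sigma_N^m)$ as a sum of correlations $\Omega(n_1,\dots,n_m)$, control these through \eqref{MultMixProd} via the social-partition/cluster structure, show that the only configurations contributing at order $N^{m/2}$ are pairings with bounded within-pair gaps, and extract the Wick product. The one place where your sketch and the paper diverge in a way that matters is the choice of cutoff: you take a threshold $\Delta_N\to\infty$ and invoke \eqref{MultMixProd} for the cluster observables $H_\ell = \prod_{n_j\in C_\ell} H\circ F^{n_j-\min C_\ell}$, but $\|H_\ell\|_{C^r}$ grows with the within-cluster diameter (for hyperbolic $F$ this growth is exponential), so letting $\Delta_N\to\infty$ before taking $N\to\infty$ can destroy the bound. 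The paper avoids this by a two-parameter truncation: fix a \emph{constant} pair-gap bound $M$ (giving only finitely many products $H\cdot(H\circ F^{M_i})$ with uniformly bounded norms), then choose a constant inter-cluster separation $L=L(M)$, take $N\to\infty$, and only at the very end let $M\to\infty$; the tail $\sum_{g>M}\zeta(H\cdot H\circ F^g)$ is then handled by the convergence of the Green--Kubo series. Also, your ``approximate factorization across large gaps'' is not a separate lemma but simply \eqref{MultMixProd} applied with $s=m/2$ to the combined observables, together with Lemma~\ref{lem:as}-type control on the inter-cluster separations; it is worth stating this explicitly rather than citing \eqref{MultMixProd} informally, since \eqref{MultMixProd} as written only gives the full factorization $\prod_j\zeta(H_j)$, not a general partial one. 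The case analysis you run (singleton clusters, clusters of size $\geq 3$, pair clusters) corresponds to the paper's Lemma~\ref{le:odd}, Lemma~\ref{le:even}, and Lemma~\ref{lem:pairing} respectively, and the bookkeeping you describe is correct once the cutoff is made fixed.
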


 \begin{corollary}
If $F$ satisfies either the assumptions of Theorem \ref{ThEM+Drift}
or the assumptions of Theorem \ref{thm:social} with 
$L_N\geq c \sqrt{N}$ and $d\geq 3$,  
then $F$ satisfies the CLT.
\end{corollary}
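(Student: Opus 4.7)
The plan is to verify the two hypotheses of Theorem \ref{ThMultMixCLT} --- namely the multiple correlation bound \eqref{MultMixProd} and the summability $\sum_n L_n^{-d}<\infty$ --- under each alternative set of hypotheses, after which Theorem \ref{ThMultMixCLT} delivers the CLT.

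Under the assumptions of Theorem \ref{thm:social}, the estimate \eqref{MultMixProd} is precisely the conclusion of that theorem, so nothing further is required on that score. The summability is immediate from the extra hypotheses $L_N\geq c\sqrt N$ and $d\geq 3$:
$$\sum_{n=1}^\infty L_n^{-d}\leq c^{-d}\sum_{n=1}^\infty n^{-d/2}\leq c^{-d}\sum_{n=1}^\infty n^{-3/2}<\infty.$$
Theorem \ref{ThMultMixCLT} now applies.

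Under the assumptions of Theorem \ref{ThEM+Drift}, that theorem (applied at every order $s$) combined with Lemma \ref{lem:as} gives, for each $s\geq 2$, positive constants $\eta_s,C_s$ and the bound
$$\left|\int\prod_{j=1}^s H_j(F^{n_j}z)\,d\zeta-\prod_{j=1}^s\zeta(H_j)\right|\leq C_s\prod_{j=1}^s\|H_j\|_{C^r}\,e^{-\eta_s\Delta(n_1,\ldots,n_s)}.$$
To recast this in the polynomial shape of \eqref{MultMixProd} I would choose the sequence $L_n=(1+n)^2$. Given times $n_1<\ldots<n_s$, consider the natural social partition $\fP$ from the proof of Theorem \ref{thm:social}, whose atoms are the maximal runs of consecutive $n_j$'s with adjacent gaps $\leq\Delta$. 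For this $\fP$ each forward-fixed $n_j$ lies in the same atom as $n_{j-1}$, hence $n_j-n_{j-1}\leq\Delta$, giving
$$\kappa^+(\fP)\leq L_\Delta^{s-1}=(1+\Delta)^{2(s-1)},$$
and analogously for $\kappa^-(\fP)$. Therefore $(\min_\fP\kappa(\fP))^{-d}\geq (1+\Delta)^{-2(s-1)d}$, and since exponential decay dominates any polynomial, $e^{-\eta_s\Delta}\leq C'_{s,d}(1+\Delta)^{-2(s-1)d}$ for all $\Delta\geq 0$, which establishes \eqref{MultMixProd}. Moreover $\sum_n L_n^{-d}=\sum_n(1+n)^{-2d}<\infty$ for every $d\geq 1$, so Theorem \ref{ThMultMixCLT} again applies.

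The only substantive step is the verification of \eqref{MultMixProd} in the exponential case: one must bridge an exponential $s$-point mixing estimate with the polynomial-in-$L$ shape of \eqref{MultMixProd}. The observation that polynomials are dominated by exponentials, together with the natural social partition from Theorem \ref{thm:social}, resolves this cleanly, with the specific growth rate of $L_n$ (here $(1+n)^2$) chosen loosely enough to be implied by exponential mixing yet tightly enough to keep $\sum L_n^{-d}$ finite for every $d\geq 1$.
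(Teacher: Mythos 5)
Your treatment of the Theorem \ref{thm:social} case is exactly the paper's: apply Theorem \ref{ThMultMixCLT} after noting that \eqref{MultMixProd} is the conclusion of Theorem \ref{thm:social} and that $L_N \geq c\sqrt{N}$, $d\geq 3$ force $\sum L_n^{-d}<\infty$.

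For the Theorem \ref{ThEM+Drift} case, however, you take a genuinely different route. The paper simply observes that Theorem \ref{ThEM+Drift} makes $F$ exponentially mixing of all orders and then cites the external CLT for exponentially mixing systems (\cite{Ch95, BG19}). You instead fold this case into Theorem \ref{ThMultMixCLT} as well, by manufacturing a sequence $L_n = (1+n)^2$ and showing that exponential $s$-fold mixing (upgraded to the $\Delta$-form via Lemma \ref{lem:as}) implies the polynomial multi-mixing bound \eqref{MultMixProd}. The verification is sound: for the natural interval-type social partition $\fP$ used in the proof of Theorem \ref{thm:social}, all gaps $n_j-n_{j-1}$ entering $\kappa^{\pm}(\fP)$ are within-atom gaps and hence $\leq \Delta(n_1,\ldots,n_s)$, giving $\min_\fP\kappa(\fP)\leq L_\Delta^{s-1}=(1+\Delta)^{2(s-1)}$; and since $\sup_{\Delta\geq 0}(1+\Delta)^{2(s-1)d}e^{-\eta_s\Delta}<\infty$, the exponential bound dominates the polynomial one. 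Because the proof of Theorem \ref{ThMultMixCLT} uses $L_n$ only through \eqref{MultMixProd} and the summability of $L_n^{-d}$ (never through its role as an LLT normalization), your substitution is legitimate. The trade-off: your argument is more self-contained (one CLT mechanism covers both alternatives, no external citation), while the paper's is shorter and leans on a standard result. One detail worth making explicit in your write-up is that for the CLT one needs the hypotheses of Theorem \ref{ThEM+Drift} to hold for \emph{every} order $s$ (so as to get \eqref{MultMixProd} for all $s$, needed for all the moments in Theorem \ref{ThMultMixCLT}); this is also implicitly required by the paper's citation of \cite{BG19}, which concerns systems exponentially mixing of all orders.
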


\begin{proof}
In the case of Theorem \ref{ThEM+Drift}, this follows from the CLT for exponentially mixing systems
(\cite{Ch95, BG19}).
In the case of Theorem \ref{thm:social}, the result follows from Theorem \ref{ThMultMixCLT}.
\end{proof}

\begin{proof}[Proof of Theorem \ref{ThMultMixCLT}]
By \eqref{MultMixProd} with $n_1=0,n_2=n$
\begin{equation}
\label{GKSigma}
\sigma^2:=\sum_{n=-\infty}^\infty \zeta( H (H\circ F^n))
\end{equation}
exists and is finite.
Hence
$$
\zeta \left(\frac{\Sigma^2_N(H)}{N}\right)=\frac{1}{N}\sum_{1\le i,j\le N}
 \zeta((H\circ F^i)(H\circ F^j))
=$$$$\sum_{k=-N+1}^{N-1}\frac{N-|k|}{N} \zeta( H (H\circ F^k))
\to \sum_{n=-\infty}^\infty \zeta\left( H\left( H\circ F^n\right)\right). 
$$

To finish our proof, we need to estimate the asymptotics of moments 
$\zeta (\Sigma^m_N(H))$, for any $m\ge 3$.  
Denote $$\Omega(k_1,\dots, k_m)=\int_Z \left(\prod_{i=1}^m H(F^{k_i}z) \right) d\zeta(z)$$
so that
\begin{equation}\label{s:clt}
\zeta (\Sigma^m_N(H))=\sum_{k_1,\dots, k_m =1}^N
\Omega(k_1,\dots, k_m).
\end{equation}

For the vector $(k_1,...,k_m)$ we associate another vector $(n_1,...,n_m)$ which
is the permutation of the elements of $(k_1,...,k_m)$ in increasing order, that is
$n_1 \leq n_2 \leq ... \leq n_m$
Noting that $\Omega$ is symmetric, we have 
$\Omega(k_1,\dots, k_m) = \Omega(n_1,\dots, n_m)$.
We rewrite the above sum into two terms as $I_1+I_2$, where 
$I_1$ is the sum of terms, whose social partition 
minimizing the RHS of \eqref{MultMixProd}
 is not pairing (i.e. at least one atom contains more than two elements), 
and $I_2$ is the sum of terms, whose 
corresponding social partition is pairing.
(If there are more than one partition minimizing 
$\kappa$ at least one
of which is not pairing then we put the corresponding term into $I_1.$)

We need two auxiliary estimates.
Let $Q = \{Q_1,...,Q_r\}$ 
be a fixed social partition of the set $\{ 1,2,...,m \}$.
We say that $\mathcal Q(n_1,...,n_m) = Q$
if the partition $\fP$ minimizing the RHS of \eqref{MultMixProd}
for the given numbers $n_1,...,n_m$ is of the form
$\fP = \{ P_1,...,P_r \}$ with 
$\{ i: n_i \in P_k \} = Q_k$ for all $k=1,...,r$. Next we write
$$I_Q = 
\sum_{k_1,\dots, k_m : \mathcal Q(n_1,...,n_m) = Q}
\Omega (n_1,...,n_m).
$$

\begin{lemma}\label{le:odd}
(a) $I_Q={\rm O}\left(N^{r}\right).$ 

(b) If $Q=Q_1\cup\cdots\cup Q_r$ is not pairing, then the sum $I_Q={\rm O}\left(N^{(m-1)/2}\right).$
\end{lemma}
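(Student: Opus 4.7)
The plan is to apply the multiple mixing bound \eqref{MultMixProd} pointwise and then carry out the summation over $m$-tuples by isolating the forward-fixed gaps. For any ordered tuple $(k_1, \ldots, k_m)$ with $\mathcal{Q}(n_1, \ldots, n_m) = Q$, the definition of $\mathcal{Q}$ together with \eqref{MultMixProd} yields $|\Omega(n_1, \ldots, n_m)| \leq C\, \kappa(Q)^{-d} \leq C\, \kappa^+(Q)^{-d}$. By \eqref{eq:ff} and the definition of forward-free positions, this can be rewritten as
\[
\kappa^+(Q)^{-d} = \prod_{j \in F^+(Q)} L_{n_j - n_{j-1}}^{-d},
\]
a product of exactly $|F^+(Q)| = m - r$ factors, indexed by those positions $j \in \{2, \ldots, m\}$ that are not the smallest element of their atom in $Q$.

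To bound $I_Q$, I would first upper-bound by dropping the restriction $\mathcal{Q}(n) = Q$ (this only enlarges the sum) and reorganize by sorted tuples: since $\Omega$ is symmetric in its arguments, $|I_Q|$ is at most $m!\, C$ times the sum of $\kappa^+(Q)^{-d}$ over $1 \leq n_1 \leq n_2 \leq \ldots \leq n_m \leq N$. Changing variables to the increments $d_j = n_j - n_{j-1}$ for $j = 2, \ldots, m$ factors this sum into independent one-dimensional sums. For each $j \in F^+(Q)$, the sum $\sum_{d_j \geq 0} L_{d_j}^{-d}$ is a bounded constant by the hypothesis $\sum_n L_n^{-d} < \infty$ of Theorem \ref{ThMultMixCLT}; for each $j \in \{2, \ldots, m\} \setminus F^+(Q)$, the inner sum over $d_j \in [0, N]$ is $O(N)$; and $n_1 \in [1, N]$ contributes an additional $O(N)$. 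Since position $1$ is always forward-free, the number of unweighted $d_j$-sums is exactly $r - 1$, producing a grand total of $O(N \cdot N^{r-1}) = O(N^r)$. This proves (a).

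Part (b) follows from the purely combinatorial observation that a social partition containing an atom of size at least three satisfies $m = \sum_{i=1}^r |Q_i| \geq 3 + 2(r-1) = 2r + 1$, hence $r \leq (m-1)/2$; thus $N^r \leq N^{(m-1)/2}$ and the claim follows from (a).

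The only mild technical point is the treatment of ties $n_j = n_{j-1}$, where the symbol $L_0^{-d}$ may be undefined; the ordered tuples with some repeated entry number $O(N^{m-1})$ and each contributes a bounded $|\Omega|$, so they form a lower-order correction that can be handled separately. The key conceptual point---and the main thing to get right---is the matching between the multiple-mixing bound and the summability assumption on $L_n^{-d}$: the bound weights precisely the $m - r$ forward-fixed gaps, leaving the $r$ forward-free positions (including $n_1$) free to range over $[1, N]$, which produces the $N^r$ scaling.
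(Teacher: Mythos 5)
Your proof is correct and follows essentially the same route as the paper: bound $\kappa(Q)^{-d}$ by $\kappa^+(Q)^{-d}$, express $\kappa^+(Q)^{-d}$ as a product over the $m-r$ forward-fixed gaps, and observe that summing these factors over their increments is $O(1)$ by the summability of $L_n^{-d}$, leaving $r$ free coordinates each contributing $O(N)$. Your explicit change of variables $d_j = n_j - n_{j-1}$ is precisely how one makes rigorous the paper's reorganization of the sum into an outer sum over free elements and a uniformly bounded inner sum over fixed elements, and your combinatorial bound $r \leq (m-1)/2$ for a non-pairing social partition is slightly cleaner than the paper's stated $r < \lfloor m/2 \rfloor$ (which in fact fails for odd $m$, though the conclusion $O(N^{(m-1)/2})$ still holds).
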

\begin{proof}
Since $1/\kappa_Q(n_1, \dots n_m)\leq 1/\kappa_Q^+(n_1, \dots n_m)$, 
by \eqref{MultMixProd}
it suffices to estimate
\begin{equation}
\label{ForwardSum}
 \sum_{n_1, \dots n_m} \frac{1}{(\kappa_Q^+(n_1, \dots n_m))^d} . 
\end{equation} 
Let $n_1'<n_2'<
\dots <n_r'$  be the forward free elements 
among $\{n_1, \dots n_m\}$ and $n_1'', \dots n''_{m-r}$ be
the forward fixed elements. For each fixed element 
$n''_j$, 
let $\brn_j$ be the previous
element in $\{n_1, \dots n_m\}$.
Rewrite \eqref{ForwardSum} as 
\begin{equation}
\label{ForwardDSum}
 \sum_{n_1', \dots n_r'} 
\left[ \sum_{n_1'', \dots n_{m-r}''} \left(\frac{1}{\prod_{j=1}^{m-r} L_{n_j''-\brn_j}} \right)^d \right]. 
\end{equation}
Since $L_n^{-d}$ is summable, the inner sum is uniformly bounded, so that
\eqref{ForwardDSum} is bounded by $N^r.$  This proves (a).

(b) follows from (a) because if
 $Q$ is not pairing, then $r< 
\lfloor
m/2 \rfloor.$ 
\end{proof}

Since there are finitely many partitions of $\{1, \dots m\},$ Lemma \ref{le:odd} implies that $|I_1|$ is bounded above by $O\left(N^{(m-1)/2}\right)$. 
In particular, for odd $m$, $\zeta (\Sigma^m_N(H))={\rm O}(N^{(m-1)/2})$.

Now let $m$ be even, 
and $Q$ be a pairing, that is $Q = \{ Q_1,...,Q_{m/2} \}$
with all atoms $Q_k$ containing exactly two numbers.
By forward (backward) step we mean $n_{j}-n_{j-1}$ where $n_j$ is forward (backward) fixed
 in the partition $\mathcal Q(n_1, \dots n_m)$. Let $\Gamma_Q(n_1, \dots n_m)$ be a largest among all forward and backward steps
in the partition $Q$
and let $\Gamma(n_1, \dots n_m) = \Gamma_{\mathcal Q(n_1, \dots n_m)}(n_1, \dots n_m)$.

\begin{lemma}\label{le:even}
For any $ \epsilon>0$, there exists $ M>0$, such that 
$$\left|\sum_{k_1,\dots,k_m:\Gamma(n_1,\dots,n_m)>M}\Omega(n_1,\dots,n_m)\right|\le N^{m/2}\epsilon.$$
\end{lemma}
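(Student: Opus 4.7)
The plan is to apply the multiple-mixing estimate \eqref{MultMixProd}, namely $|\Omega(n_1,\dots,n_m)| \leq C \kappa(\mathcal{Q}(n))^{-d}$, and to use the summability hypothesis $\sum_n L_n^{-d} < \infty$ to extract a tail factor coming from the step whose length exceeds $M$. The task then reduces to a careful combinatorial bookkeeping.

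First I would decompose the restricted sum according to the value of the minimizing partition $\mathcal{Q}(n_1,\dots,n_m)$, which ranges over the finite set of social partitions of $\{1,\dots,m\}$. Since $\{n : \mathcal{Q}(n)=Q, \Gamma_Q(n)>M\} \subseteq \{n : \Gamma_Q(n)>M\}$ and since $\Gamma(n)=\Gamma_{\mathcal{Q}(n)}(n)$, one gets the crude bound
$$
\Big|\sum_{k_1,\dots,k_m:\Gamma>M}\Omega\Big| \leq C \sum_{Q \text{ social}} \sum_{n:\Gamma_Q(n)>M} \kappa_Q^{-d}.
$$
For non-pairing $Q$, the number of atoms $r$ satisfies $r \leq m/2-1$, so by the same estimate as in Lemma \ref{le:odd}(a) (where the inner sum over forward-fixed gaps is absorbed by $\sum L_n^{-d}<\infty$), the unrestricted sum is $O(N^{m/2-1})=o(N^{m/2})$; this term is absorbed into $\tfrac12 \epsilon N^{m/2}$ for $N$ large. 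It remains to handle pairings.

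For a fixed pairing $Q$ let $g_j := n_j - n_{j-1}$, let $S^+\subset\{2,\dots,m\}$ be the forward-fixed positions (the larger-of-pair indices) and let $S^-+1 \subset \{2,\dots,m\}$ be the indices at which a backward step is taken; both have size $m/2$. Then $\kappa_Q^+ = \prod_{j\in S^+} L_{g_j}$, $\kappa_Q^- = \prod_{k\in S^-+1} L_{g_k}$ and $\kappa_Q^{-d} \leq \min\{(\kappa_Q^+)^{-d}, (\kappa_Q^-)^{-d}\}$. The condition $\Gamma_Q(n)>M$ forces some $g_{j^*}>M$ with $j^* \in S^+ \cup (S^-+1)$. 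By a union bound over the (at most $m$) choices of $j^*$, using $(\kappa_Q^+)^{-d}$ when $j^*\in S^+$ and $(\kappa_Q^-)^{-d}$ when $j^*\in S^-+1$, I need to estimate expressions of the form
$$
\sum_{\substack{g_1,\dots,g_m\ge 0 \\ g_{j^*}>M,\ \sum_i g_i\le N}} \prod_{j\in S^+} L_{g_j}^{-d}.
$$
The $m/2$ free variables $g_i$ with $i\notin S^+$ each range over $\{0,1,\dots,N\}$, producing a factor at most $N^{m/2}$; the variables $g_j$ with $j\in S^+\setminus\{j^*\}$ contribute a uniformly bounded constant $(\sum_k L_k^{-d})^{m/2-1}$ via the hypothesis of Theorem \ref{ThMultMixCLT}; and the sum over $g_{j^*}>M$ is the tail $\tau(M):=\sum_{k>M} L_k^{-d}$, which tends to $0$ as $M\to\infty$.

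Summing over the $m$ choices of responsible index $j^*$ and the finitely many pairings $Q$ yields a pairing contribution of at most $C_m N^{m/2} \tau(M)$. Choosing $M$ so large that $C_m \tau(M) \leq \epsilon/2$ and $N$ large enough that the non-pairing contribution is at most $\epsilon N^{m/2}/2$ completes the argument. The main obstacle is organizing the bookkeeping: the condition $\Gamma>M$ is defined through the data-dependent partition $\mathcal{Q}(n)$, so one must both enumerate over candidate pairings $Q$ (after observing $\mathcal{Q}(n)=Q$ implies $\Gamma_Q(n)=\Gamma(n)$) and correctly toggle between $\kappa_Q^+$ and $\kappa_Q^-$ depending on whether the oversized step is forward or backward, while ensuring that the argument is uniform in $N$ up to the tail factor $\tau(M)$.
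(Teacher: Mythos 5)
Your proof is correct and follows essentially the same route as the paper's: decompose by the minimizing social partition, exploit the sum $\sum_n L_n^{-d}<\infty$, and, for the partition–position responsible for the oversized step, extract the tail $\sum_{n>M}L_n^{-d}\to 0$ from $\kappa^+$ or $\kappa^-$ depending on whether it is a forward or a backward step. The paper organizes the same idea slightly more uniformly — it splits on $\Gamma^+>M$ versus $\Gamma^->M$ and runs the tail argument inside the inner sum of \eqref{ForwardDSum} for every social partition, which gives a bound $CN^{r}\tau(M)\le CN^{m/2}\tau(M)$ that is uniform in $N$ — whereas you bound the non-pairing contribution by the unconditional $O(N^{m/2-1})$ from Lemma~\ref{le:odd}(a), which only falls below $\tfrac12\eps N^{m/2}$ for $N$ large; this costs nothing in the application (the lemma is used there for large $N$ anyway), but the paper's version of the tail estimate covers all $N$ simultaneously and all $r\le m/2$ at once.
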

\begin{proof}

It is enough to prove the lemma for $\Gamma$ replaced by $\Gamma^+$ and also for $\Gamma$ replaced
by $\Gamma^-$, where
$\Gamma^+$ is a largest among all forward steps and $\Gamma^-$ is a largest among all forward steps.
We only consider $\Gamma^+$ as $\Gamma^-$ is similar.
The proof for $\Gamma^+$ proceed in the same way as the proof of Lemma \ref{le:odd} except 
we estimate the inner sum in \eqref{ForwardDSum} by
\begin{equation}
\label{ImprSum}
C \left(\sum_{n=1}^\infty L_n^{-d} \right)^{m-r-1}
\left(\sum_{n=M}^\infty L_n^{-d} \right).
\end{equation}
Indeed there are $m-r$ factors in the inner sum in \eqref{ForwardDSum}, and by our assumptions one of them
should be greater than $M$ 
As the second factor can be made as small as we wish by taking $M$ large 
and since $r = m/2$,
the result follows.
\end{proof}

\begin{lemma}
\label{lem:pairing}
Let $Q$ be a pairing which is different from 
\begin{equation}
\label{NaturalPairing}
 \brQ:=\left[(12), (34), \dots, ((m-1)\; m)\right].
\end{equation}
Then the number of
$m$-tuples $(k_1,...,k_m)$ with $\Gamma_{ \brQ}(n_1, n_2,\dots, n_m)<{L}$ is ${\rm O}\left(N^{(m/2)-1}\right)$,
where the implicit constant depends on {$L$}.
\end{lemma}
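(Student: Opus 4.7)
The plan is a combinatorial gap-counting argument applied to the sorted sequence $(n_1,\ldots,n_m)$: the hypothesis $\Gamma_Q(n_1,\ldots,n_m)<L$ forces a collection of consecutive differences $n_{i+1}-n_i$ to be smaller than $L$, and I will show that for any pairing $Q\neq \brQ$ the number of such forced gaps is at least $m/2+1$, which is strictly larger than the $m/2$ forced by $\brQ$ itself. Counting the number of sorted tuples subject to these constraints then yields the bound $O(N^{m/2-1})$, in contrast to the bound $O(N^{m/2})$ that $\brQ$ would produce.

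First I would unfold the definitions. For each pair $(a,b)\in Q$ with $a<b$, the element $n_a$ is backward-fixed in $Q$ so that $\Gamma_Q$ controls the backward step $n_{a+1}-n_a$, while $n_b$ is forward-fixed so that $\Gamma_Q$ controls the forward step $n_b-n_{b-1}$. Writing
\[ A=\{a:(a,b)\in Q\}, \qquad B=\{b:(a,b)\in Q\}, \qquad S_Q:=A\cup (B-1)\subseteq\{1,\ldots,m-1\}, \]
the assumption $\Gamma_Q<L$ therefore forces $n_{i+1}-n_i<L$ for every $i\in S_Q$.

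The key combinatorial claim is $|S_Q|\geq m/2+1$ whenever $Q\neq \brQ$. Since $A$ and $B$ partition $\{1,\ldots,m\}$ with $|A|=|B|=m/2$, the equality $|S_Q|=m/2$ is equivalent to $B-1\subseteq A$, that is, no two consecutive integers both lie in $B$ and $1\notin B$. Together with $|B|=m/2$ and $B\subseteq\{2,\ldots,m\}$, a standard stars-and-bars count forces $B=\{2,4,\ldots,m\}$ and $A=\{1,3,\ldots,m-1\}$. A short induction on $m$ then shows that under these parity constraints the pairing must be $\brQ$ itself: the element $m\in B$ is paired with some odd $a<m$, and if $a<m-1$, then $m-1\in A$ would have to be paired with some $b\in B$ with $b>m-1$, forcing $b=m$, contradicting that $m$ is already paired. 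Hence $m$ is paired with $m-1$, and the induction closes on $\{1,\ldots,m-2\}$.

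Given $Q\neq \brQ$, and thus $|S_Q|\geq m/2+1$, parameterizing sorted tuples $1\leq n_1\leq\cdots\leq n_m\leq N$ by $n_1$ together with the gaps $g_i=n_{i+1}-n_i$ gives at most
\[ N\cdot L^{|S_Q|}\cdot (N+1)^{m-1-|S_Q|}=O\!\left(N^{m-|S_Q|}\right)=O\!\left(N^{m/2-1}\right) \]
such tuples, with the implicit constant depending on $L$. Multiplying by the at most $m!$ orderings of $(k_1,\ldots,k_m)$ that produce the same sorted tuple preserves the order. The delicate point is the uniqueness argument isolating $\brQ$ as the only pairing realizing $|S_Q|=m/2$; everything else is routine bookkeeping.
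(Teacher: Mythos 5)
Your proof is correct and takes essentially the same approach as the paper: both reduce to showing that for a pairing $Q\neq\brQ$ the forward-fixed and backward-fixed edge sets must differ, so at least $m/2+1$ of the gaps $n_{i+1}-n_i$ are forced below $L$, which yields the $O(N^{m/2-1})$ count after passing to sorted tuples; you establish the edge-set claim by first pinning down the parity sets $A,B$ with a stars-and-bars argument and then inducting on the top pair, whereas the paper inducts directly, peeling off both the top pair $\{m-1,m\}$ and the bottom pair $\{1,2\}$ at each step. (You also correctly read the constraint as $\Gamma_Q<L$; the $\brQ$ subscript in the lemma statement and in the corresponding step of the paper's proof appears to be a typo for $Q$.)
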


\begin{proof}
We claim that if $Q\neq \brQ$ then the sets of forward fixed and backward fixed edges 
are different.
If follows that if both $\Gamma_{\brQ}^+(n_1, \dots, n_m)<M$ and
$\Gamma_{\brQ}^-(n_1, \dots, n_m)<M$,
then there are at least $m/2+1$ edges which
are shorter that $M.$ The number of such tuples is $O(N^{(m/2)-1})$ and the result follows.

It remains to prove the claim. That is, we show that if the sets of forward fixed and backward
fixed edges are the same, {then} $Q=\brQ.$
We proceed by induction. If $m=0$ or $2$  then there are no pairings different 
from $\brQ.$ Suppose $m>2.$ Then $(n_{m-1}, n_m)$ is forward fixed, so it should be backward
fixed but this is only possible if $(m-1)$ is paired to $m.$ Likewise $(n_1, n_2)$ is backward fixed,
hence it is forward fixed. But this is only possible if 1 is paired to 2. Removing $1,2, (m-1)$ and $m$
from $Q$ we obtain a partition of $m-4$ elements for which the set of forward fixed and backward fixed
edges coincide. By induction $3$ is paired to $4$, $5$ to $6$,\dots, $(m-3)$ to $(m-2).$
The proof is complete.
\end{proof}

{
By the above lemmas, it suffices to consider indeces $k_1,...,k_m$
so that 
\begin{equation}
\label{eq:wellsep} \forall i=1,...,m/2:\,
M_i := n_{2i} - n_{2i-1} \leq M \text{ and }\forall i=1,...,m/2-1:\, 
n_{2i+1} - n_{2i} > L
\end{equation} for some large $M$ and $L=L(M)$.
Indeed, by choosing $M = M(\eps)$ and $N> N_0$, $N_0 = N_0(L)$, the above 
lemmas give that the contribution of other terms is $< \eps N^{m/2}$. 
Now we choose $L$ so that 
for any fixed $M_1,...,M_{m/2}$ (finitely many choices), the RHS of 
\eqref{MultMixProd} with $s = m/2$ and $H_j = H (H \circ T^{M_j})$ is less than 
$\eps$. We conclude
$$
\left|
\zeta(\Sigma_N^m) - \sum_{k_1,...,k_m \text{ satisfying \eqref{eq:wellsep}}}
\prod_{i=1}^{m/2}
\left(\int_Z\left(H(H \circ T^{M_i})\right)d\zeta(z)\right) \right| \leq 2 \eps N^{m/2}$$
Let us write $A_\ell = \int_Z\left(H(H \circ T^{\ell})\right)d\zeta(z)$.
Now we claim
$$
\sum_{k_1,...,k_m \text{ satisfying \eqref{eq:wellsep}}}
\prod_{i=1}^{m/2}
A_{M_i}
= (m-1)!! N^{m/2}(1 + o(1)) \left[ \sum_{\ell = 0}^{M} \left( 
A_{\ell}
(1 + \one_{\ell > 0})
\right) \right]^{m/2}.
$$
To prove the claim, first note that 
$$ \sum_{M_1,...,M_{m/2} = 0}^M A_{M_1} ... A_{M_{m/2}} = \left(
\sum_{\ell=0}^M A_\ell \right)^{m/2}.
$$
Now it remains to count the number of tuples $(k_1,...,k_m)$ corresponding to 
the values $M_1,...,M_{m/2}$. 
Assume for example that $M_i>0$ for all $i$.
To count the number of possibilities, we first fix a pairing of indeces $1,..,m$
which can be done in $(m-1)!!$ different ways. Then we have $\approx N^{m/2}$ choices to prescribe
exactly one element of each pair. Let us say these values are $s_1 < s_2< ... <s_{m/2}$.
Except for a $o(N^{m/2})$ of these choices, we have $s_i - s_{i-1} > 2M+L$ and so for each remaining
index $k_j$ we have two choices: if it is paired to $s_i$, then either $k_j = s_i - M_i$ or $k_j = s_i + M_i$.
Thus the total number of choices is $(m-1)!!2^{m/2}N^{m/2}(1+o(1))$ which verifies the claim 
for the case $M_i>0$ for all $i$. If $M_i = 0$ for some $i$, then we only have one choice for the corresponding 
$k_j$ and so we lose a factor of $2$. The claim follows.

To finish the proof, notice that 
$$
\sum_{\ell = 0}^{M}  
A_{\ell}
(1 + \one_{\ell > 0}) = \sum_{\ell = -M}^M \zeta(H(H\circ F^\ell)) \to \sigma^2  \text{ as } M \to \infty.
$$
Thus we have verified}
$$\zeta(\Sigma^m_N(H))=\begin{cases} {\rm o}(N^{m/2}),&\text{$m$ is odd},\\(m-1)!!N^{m/2}\sigma^{m}+
{\rm o}(N^{m/2}),
&\text{$m$ is even.}\end{cases}
$$
completing the proof of the theorem.
\end{proof}

\begin{remark}
The asymptotic variance given by \eqref{GKSigma} is typically non-zero. In particular, if either the drift is non zero, or
$d\geq 5$, then a direct calculation shows that
$$ \lim_{N\to \infty} \zeta(\Sigma_N^2)-N \sigma^2=-\sum_{n=-\infty}^{ \infty} 
n {\zeta( H (H\circ F^n))}  $$
(the convergence of the right hand side follows from the assumptions imposed above). 
Thus if $\sigma^2=0$ then $\zeta(\Sigma_N^2)$ is bounded so by 
$L^2$--Gotshalk-Hedlund Theorem $H$ is an $L_2$ coboundary.
It is an open question if the same conclusion holds if $\mu(\tau)=0$ and $d$ is $3$ or $4$.
However, by assumption, $f$ is exponentially mixing, so if $H$ does not depend on $y$ then
$\sigma^2>0$ unless $H$ is an $L^2$ coboundary. Thus in many (possibly all) cases $\sigma^2$ is
a positive semidefinite quadratic form which is not identically equal to zero, and so its null set is a 
a linear subspace of positive (or infinite) codimension.
\end{remark}

\section{Mixing rates for ergodic fibers}
\label{ScMix-EF}
\subsection{Results}

\begin{definition}
We say that $(f, \tau)$ satisfies a {\em mixing averaged Edgeworth expansion of order $r$}
if there   are constants $k_1, k_2$ 
and a sequence $\delta_N \to 0$ so that
for any function $\phi = \phi_N \in C^{k_2}(\mathbb R^d,\reals)$ supported on the box $J =J_N$, the expression

$$ \cI_{A_1, A_2, \phi}(N):=
\mu(A_1(x) A_2(f^N x) \phi(\tau_N(x))) $$
satisfies
$$  
\left| \cI_{A_1, A_2, \phi}(N)-
N^{-d/2} \int_{s \in \reals^d} \phi(s) 
 \cE^{A_1, A_2}_{ r}(s / \sqrt N)
ds \right| $$
$$\leq  \|A_1\|_{C^{k_1}} \|A_2\|_{C^{k_1}} 
\|\phi\|_{C^{k_2}} { \Vol(J)} \delta_N N^{-(d+r)/2} $$
where
$$ \cE_{ r}(s) = \cE^{A_1, A_2}_{ r}(s)= \fg(s) 
{  \sum_{p=0}^{r} \frac{P_p^{A_1, A_2} (s)}{N^{p/2}} }, $$
$\fg(\cdot)$ is a centered Gaussian density with positive definite covariance matrix
and $P_p(s)$ are polynomials in $s$ whose coefficients
are bilinear forms in $(A_1, A_2)$,
{bounded in absolute value by $C \|A_1\|_{C^{k_1}} \|A_2\|_{C^{k_1}} $,} and $P_0^{A_1,A_2}(s) = \mu(A_1) \mu(A_2)$.
\end{definition}

\begin{definition}
We say that $(f, \tau)$ satisfies a {\em mixing averaged double Edgeworth expansion of order $r$}
if there are constants $k_1, k_2$ 
and a sequence $\delta_N \to 0$ so that
for any functions $\phi_i = \phi_i(N_i) \in C^{k_2}(\mathbb R)$ 
supported on the interval $J_i =J_i(N_i)$ ($i = 1,2$),
the expression
$$ \cI_{A_1, A_2, A_3, \phi_1, \phi_2}(N_1,N_2):=
\mu(A_1(x) A_2(f^{N_1} (x)) A_3(f^{N_2} (x)) \phi_1(\tau_{N_1}(x)) \phi_2(\tau_{N_2}(x)) ) $$
satisfies
$$
\Bigg| \cI_{A_1, A_2, A_3, \phi_1, \phi_2}(N_1,N_2) $$
\begin{eqnarray*} - \iint &&
\phi_1(s_1) \fg \left( \frac{s_1}{\sqrt{N_1}} \right) 
\phi_2(s_2) \fg  \left( \frac{s_2 - s_1}{\sqrt{N_2 - N_1}} \right)  \\
&&
N_1^{-d/2} N_2^{-d/2}\sum_{p_1, p_2 = 0}^{ r}
\frac{P_{p_1,p_2}^{A_1, A_2, A_3} (s_1/\sqrt{N_1}, (s_2-s_1)/\sqrt{N_2-N_1}) }{N_1^{\frac{p_1}{2}}(N_2 - N_1)^{\frac{p_2}{2}}} ds_1 ds_2
\Bigg|
\end{eqnarray*}
\begin{eqnarray*}
&\leq& \left(\prod_{j=1}^3 \|A_j\|_{C^{k_1}} \right) \left( \prod_{i=1,2}
\|\phi_i\|_{C^{k_2}} { \Vol(J_i)} \right) \\
&&\delta_{\min \{ N_1, N_2 - N_1 \}} (\max\{N_1, N_2 - N_1 \})^{-d/2} (\min\{N_1, N_2 - N_1 \})^{-(d+r)/2} 
\end{eqnarray*}
where
 $P_{p_1,p_2}^{A_1, A_2, A_3}(s_1,s_2)$ are polynomials in $s_1, s_2$ whose coefficients
are bounded trilinear forms in $(A_1, A_2,A_3)$, 
bounded in absolute value 
by $\DS C \prod_{j=1}^3 \|A_j\|_{C^{k_1}} $,
and $$P_{0,0}^{A_1,A_2,A_3}(s) = \mu(A_1) \mu(A_2) \mu(A_3) .$$
\end{definition}

We will use the following hypotheses.
 
\begin{itemize}
\item[(A1)] $(f, \tau)$ satisfies a mixing averaged Edgeworth expansion of order $r_1$;

\item[(A1')]  $(f, \tau)$ satisfies a mixing averaged double Edgeworth expansion of order $r_1$;

\item[(A2)] For each {  $\delta>0$, we have} $\mu(|\tau_N|>N^{1/2+\delta})={\rm O}_{\delta}(N^{-r_2})$;

\item[(A3)] There are constants $\beta<1$ and $k_3\in \reals^+$ such that if $B\in C^{k_3}(Y)$ 
has zero mean, then for any $T \in \reals_+$,
$S_T^B(y):=\int_{s \in [0,T]^d} B(G_s y) ds$ satisfies
$$ \nu\left( \max_{t \in \reals, |t|<T}|S_t^B|>T^{d \beta} \right)<\frac{C \|B\|_{C^{k_3}}}{T^{r_3}}.$$

\item[(A3')] There exist constants $\beta <1$, $k_3 \in \mathbb R^+$ so that
if $B\in C^{k_3}(Y)$ 
has zero mean, then for any positive integer $M$ there is some constant $C = C_M$ so that for any $T \in \reals_+$,
$$\nu( y: |S_T^B| > T^{d\beta}) \leq C T^{-M}.
$$


\item[(A4)] $ \mu(A_1(x) A_2(f^N x))-\mu(A_1)\mu(A_2)=
{\rm O}\left(\|A_1\|_{C^{k_1}}\|A_2\|_{C^{k_1}}
{ N^{-r_4}}
\right).$
\end{itemize}

Given $H,H_1, H_2: X\times Y\to\reals$ let
\begin{equation}
\label{def:rho}
\rho_{H_1, H_2}({ N})=\zeta(H_1 (H_2\circ F^N))-\zeta(H_1) \zeta(H_2).
\end{equation}

\begin{theorem}
\label{thm:mixingspeed}
For $i=1,2,3,4$, assume
(Ai) 
with
\begin{equation}
\label{eq:rcond}
r_i > d(1 - \beta )
\end{equation}
(noting that $r_1$ is an integer).
Then there exists $K$ such that if $H_j\in C^{K}(X\times Y)$, then for any $\delta>0$
there is some $C_{\delta}$ so that
$$ \left|\rho_{H_1, H_2}(N)\right|\leq C_\delta \|H_1\|_{C^{K}} \|H_2\|_{C^{K}}
N^{d\frac{\beta-1}{2} +\delta}. $$
\end{theorem}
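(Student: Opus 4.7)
The plan is to follow the reduction pattern used in the mixing-fiber theorems, but to replace the decay of $\rho_B(t) := \nu(B_1 \cdot B_2 \circ G_t)$ (which need no longer hold when $G$ is merely ergodic) by \emph{shearing}: (A1) makes $\tau_N$ spread out over a box of size $\sqrt{N}$, so the absence of pointwise decay becomes a Gaussian-weighted averaging problem for $G$, controlled by (A3). First, by Lemma \ref{lem:sob}, reduce to products $H_j(x,y) = A_j(x) B_j(y)$. Splitting $B_j = (B_j - \nu(B_j)) + \nu(B_j)$, the cross terms involving $\nu(B_j)$ yield pure base correlations which are $O(N^{-r_4})$ by (A4); since $r_4 > d(1-\beta)$, this is absorbed into the target rate. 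Hence we may assume $\nu(B_1) = \nu(B_2) = 0$ and write
$$\rho_{H_1,H_2}(N) = \int_X A_1(x) A_2(f^N x)\, \rho_B(\tau_N(x))\, d\mu(x).$$

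Next, fix a small $\delta' > 0$. By (A2), restricting to $\{|\tau_N| < N^{(1+\delta')/2}\}$ costs $O(N^{-r_2})$, negligible since $r_2 > d(1-\beta)$. Choose a smooth cutoff $\chi$ equal to $1$ on $[-N^{(1+\delta')/2}, N^{(1+\delta')/2}]^d$ and supported in twice this box, and apply (A1) to $\phi_N := \rho_B \cdot \chi$. Since $\|\phi_N\|_{C^{k_2}} \leq C \|B_1\|_{C^{k_3}} \|B_2\|_{C^{k_3}}$ and $\Vol(J) \leq C N^{d(1+\delta')/2}$, the Edgeworth error is $\lesssim \delta_N N^{d\delta' - r_1/2}$, which is smaller than $N^{d(\beta-1)/2 + \delta}$ for small $\delta'$ using $r_1 > d(1-\beta)$. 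After dropping $\chi$ up to a Gaussian-tail error, the main term becomes
$$N^{-d/2} \sum_{p=0}^{r_1} N^{-p/2} \int_{\reals^d} \rho_B(s)\, P_p^{A_1,A_2}(s/\sqrt{N})\, \fg(s/\sqrt{N})\, ds,$$
and by Fubini the $p$-th summand equals $\nu(B_1 \cdot \Psi_N^{(p)})$, where
$$\Psi_N^{(p)}(y) := \int_{\reals^d} B_2(G_s y)\, P_p^{A_1,A_2}(s/\sqrt{N})\, \fg(s/\sqrt{N})\, ds.$$

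The core of the proof is to show $|\Psi_N^{(p)}(y)| \lesssim N^{d\beta/2}$ off a set of small $\nu$-measure. Split $\reals^d$ into its $2^d$ orthants; within each, integrate by parts once in every coordinate. This transfers $d$ derivatives from $B_2(G_s y)$ onto $P_p(s/\sqrt{N})\fg(s/\sqrt{N})$, paying a factor $N^{-d/2}$, and replaces $B_2(G_s y)$ by its box primitive
$$S^{B_2}(\vec{s}; y) := \int_{[0,s_1] \times \dots \times [0, s_d]} B_2(G_r y)\, dr.$$
Bounding $|S^{B_2}(\vec{s}; y)| \lesssim (\max_i |s_i|)^{d\beta}$ with high $\nu$-probability via (A3), and then changing variables $u = s/\sqrt N$, produces the desired $N^{d\beta/2}$. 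Combined with the $N^{-d/2}$ prefactor this gives exactly $N^{d(\beta-1)/2}$; the $P_p$ with $p \geq 1$ contribute extra $N^{-p/2}$ savings, and the polynomial factors only add bounded Gaussian moments.

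The principal obstacle is this last step: (A3) controls only one-parameter \emph{cube} averages, whereas the integration by parts produces the rectangular primitive $S^{B_2}(\vec s; y)$. One needs an inclusion-exclusion or covering argument writing the box $\prod_i[0,s_i]$ as a signed combination of cubes (or a multi-scale decomposition along the shortest axes), combined with a dyadic split over $|\vec s|$ so that the polynomial tail $r_3 > d(1-\beta)$ in (A3) dominates the crude uniform bound $|S^{B_2}| \leq \|B_2\|_\infty \prod_i |s_i|$ on the exceptional $\nu$-set. Matching all the error exponents then determines the required $K = K(k_1, k_2, k_3, w)$ where $w$ comes from Lemma \ref{lem:sob}.
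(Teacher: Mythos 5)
Your proposal is correct and follows essentially the same route as the paper. The sequence of moves — reduce to products by Lemma \ref{lem:sob}/Corollary \ref{CrProductReduction}, strip off the fiber means with (A4), truncate to $|\tau_N|\lesssim N^{1/2+\delta'}$ with (A2), invoke the mixing Edgeworth expansion (A1), integrate by parts to transfer $d$ derivatives from $B_2\circ G_s$ onto the Gaussian weight, and then use (A3) to bound the resulting primitive $S^{B_2}$ — is identical to the paper's. The only structural difference is a Fubini reordering: you apply (A1) to $\rho_B=\nu(B_1\cdot B_2\circ G_{\cdot})$ and then pull the $\nu$-integral inside, whereas the paper fixes $y$, applies (A1) to $\phi_y(s)=B_1(y)B_2(G_sy)\psi_L(s)$ on a good set $\brY$, and then integrates over $y$. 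These are equivalent, and the exponent bookkeeping in both agrees ($N^{-d/2}$ prefactor times $L^{d\beta}\approx N^{d\beta/2}$ from the primitive, giving $N^{d(\beta-1)/2}$).

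Your flagged ``principal obstacle'' is a real subtlety, but note that it only arises for $d\geq 2$: in dimension one the primitive produced by integration by parts is exactly the interval integral that (A3) controls, so the $d=1$ proof is clean, and the paper indeed treats it first. For $d\geq 2$ the paper defines $\brY$ via the cube primitives $\max_{|t|<L}|S^{B_2}_t|$ and then asserts the bound for the rectangular primitive $\tS_s(y)$ with only a terse justification (the more explicit subdivision argument appears in the proof of Theorem \ref{thm:mixingspeed2}, under the stronger (A3')). Your first suggested fix — writing $\prod_i[0,s_i]$ as a signed combination of origin-based cubes — does not literally work (inclusion--exclusion of $[0,c]^d$'s cannot produce an arbitrary rectangle); the more promising route is your second suggestion, a covering/multi-scale decomposition into translated cubes combined with (A3) applied along the $G$-orbit, which is what the paper sketches. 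So the gap you identify is genuine at the level of detail given, but it is shared with the paper's own $d\geq 2$ exposition rather than being specific to your approach.
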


\begin{theorem}
\label{thm:mixingspeed2}
Assume (A1') with
\begin{equation}
\label{eq:r1'cond}
r_1 \in \mathbb N, \quad r_1 > 2d(1 - \beta )
\end{equation}
and (A2), (A3'), (A4) with $r_2,r_4$ satisfying \eqref{eq:rcond}. Then there exists $K$ such that if $H_j\in C^{K}(X\times Y)$, then for any $\delta>0$
there is some $C_{\delta}$ so that
\begin{equation}
\label{ErgMixOptimal}
 \left|\rho_{H_1, H_2}(N)\right|\leq C_\delta \|H_1\|_{C^{K}} \|H_2\|_{C^{K}}
N^{d(\beta-1) +\delta}. 
\end{equation}
\end{theorem}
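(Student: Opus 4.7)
The plan is to obtain the squared decay $N^{d(\beta-1)}$ by splitting $\tau_N$ at an intermediate time $M = \lfloor N/2 \rfloor$ and exploiting the commutativity of the $\reals^d$-flow $G$ to factor the fiber contribution into a product of a ``backward'' and a ``forward'' Gaussian-weighted time-average of $B_1$ and $B_2$ on $Y$. The gain of a factor $N^{d(\beta-1)/2}$ over Theorem \ref{thm:mixingspeed} then comes from applying (A3') \emph{twice} and pointwise in $y\in Y$, rather than once in $L^1(\nu)$.

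First, by Lemma \ref{lem:sob} reduce to product observables $H_j(x,y) = A_j(x)B_j(y)$, and by (A4) together with \eqref{eq:rcond} subtract off $\mu(A_1)\mu(A_2)\nu(B_1)\nu(B_2)$; we may then assume $\nu(B_j)=0$ and it remains to bound
\[
I := \int A_1(x)\, A_2(f^N x)\, \rho(\tau_N(x))\, d\mu(x),\qquad \rho(t) := \int B_1(y) B_2(G_t y) d\nu(y).
\]
Using $\tau_N(x)=\tau_M(x)+\tau_{N-M}(f^M x)$, the group law $G_{a+b}=G_a G_b$, and the $\nu$-preserving change of variable $y\mapsto G_{\tau_M(x)}^{-1}y$, rewrite
\[
I = \int d\nu(y') \int A_1(x)\, A_2(f^N x)\, \phi_1^{y'}(\tau_M(x))\, \phi_2^{y'}\bigl(\tau_N(x)-\tau_M(x)\bigr)\, d\mu(x),
\]
where $\phi_1^{y'}(s):=B_1(G_{-s}y')$ and $\phi_2^{y'}(u):=B_2(G_u y')$ are $C^{k_3}$ in their arguments with norms uniform in $y'$.

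For each fixed $y'$, truncate $\phi_1^{y'},\phi_2^{y'}$ to cubes of side $N^{1/2+\delta}$ (the tails being controlled by (A2)) and apply (A1') with $N_1=M$, $N_2=N$, first, second, third observable equal to $A_1, 1, A_2$, respectively, and the truncated $\phi_i^{y'}$. After the change of variables $(s_1,s_2)\mapsto(s_1, u:=s_2-s_1)$ inside (A1'), the leading-order joint density factors as a product of two independent Gaussians of scales $\sqrt M$ and $\sqrt{N-M}$, so the $s_1$- and $u$-integrations decouple and the leading term becomes
\[
I \approx \mu(A_1)\mu(A_2)\int \Phi_1(y')\Phi_2(y')\, d\nu(y'),
\]
with $\Phi_1(y'):=\int M^{-d/2}\fg(s/\sqrt M) B_1(G_{-s}y') ds$ and $\Phi_2(y'):=\int (N-M)^{-d/2}\fg(u/\sqrt{N-M}) B_2(G_u y') du$. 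By (A3') each set $E_i:=\{y':|\Phi_i(y')|>C N^{d(\beta-1)/2+\delta}\}$ has $\nu(E_i)=O(N^{-K})$ for arbitrary $K$; on $(E_1\cup E_2)^c$ we get $|\Phi_1\Phi_2| = O(N^{d(\beta-1)+2\delta})$, while on $E_1\cup E_2$ the bound $|\Phi_1\Phi_2|\le \|B_1\|_\infty \|B_2\|_\infty$ contributes negligibly, producing $|\int\Phi_1\Phi_2 d\nu|=O(N^{d(\beta-1)+o(1)})$. The Edgeworth error itself from (A1'), with $\|\phi_i\|_{C^{k_2}}=O(1)$ and $\Vol(J_i)=O(N^{d(1/2+\delta)})$, is $O(\delta_N N^{-r_1/2+2d\delta})$, which is $o(N^{d(\beta-1)})$ exactly thanks to the hypothesis $r_1>2d(1-\beta)$.

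The hardest step will be the application of (A1') in the third paragraph: we need to handle the $y'$-dependent test functions $\phi_i^{y'}$ uniformly in $y'$ (requiring uniform $C^{k_2}$ control after smooth truncation), and we must verify that not only the Gaussian leading term but also the polynomial Edgeworth corrections $P_{p_1,p_2}$ with $p_1$ or $p_2\ge 1$ either factor analogously under the $(s_1,u)$-substitution or contribute only to the admissible error. A second delicate point is that (A3') is used \emph{pointwise} in $y'$ rather than integrated; this is precisely why one must upgrade from (A3) (polynomial) to (A3') (superpolynomial), as the bad set must be too small to be seen by the product $\Phi_1\Phi_2$ at the target exponent.
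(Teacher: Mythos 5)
Your high-level strategy is the same as the paper's: split the cocycle at a mid-time of order $N$, invoke the double Edgeworth expansion (A1') to extract a Gaussian picture, control the two resulting flow-averages in $Y$ via (A3') applied twice and pointwise, and exploit the superpolynomial smallness of the bad $y$-set to gain the extra factor $N^{d(\beta-1)/2}$ over Theorem~\ref{thm:mixingspeed}. However, there is a concrete gap at the step where you invoke (A1').

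After the change of variable $y'=G_{\tau_M(x)}y$, the inner integral becomes
$$\int A_1(x)A_2(f^N x)\,\phi_1^{y'}(\tau_M(x))\,\phi_2^{y'}\bigl(\tau_N(x)-\tau_M(x)\bigr)\,d\mu(x),$$
and here $\phi_2^{y'}$ is applied to the \emph{increment} $\tau_N(x)-\tau_M(x)=\tau_{N-M}(f^Mx)$, not to $\tau_{N_2}(x)=\tau_N(x)$. Hypothesis (A1') is stated for $\mu\bigl(A_1(x)A_2(f^{N_1}x)A_3(f^{N_2}x)\phi_1(\tau_{N_1}(x))\phi_2(\tau_{N_2}(x))\bigr)$, so it simply does not cover your expression; what you need is an expansion with $\phi_2$ tested against $\tau_{N_2}-\tau_{N_1}$, which is a different hypothesis. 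The paper's device that makes (A1') literally applicable is to shift the $\zeta$-integral by $F^N$ rather than change the fiber variable: writing $\rho_{H_1,H_2}(N)=\zeta\bigl((H_1\circ F^N)(H_2\circ F^{2N})\bigr)$ gives $B_1(G_{\tau_N(x)}y)B_2(G_{\tau_{2N}(x)}y)$, i.e.\ test functions of $\tau_{N}$ and $\tau_{2N}$, exactly the form in (A1') with $(N_1,N_2)=(N,2N)$. A consequence of using that form, though, is that after the substitution $u=s_2-s_1$ the integrand becomes $B_1(G_{s_1}y)\,B_2(G_u G_{s_1}y)$, which does \emph{not} factor into a product $\Phi_1(y)\Phi_2(y)$: the two $G$-shifts compose. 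So the clean decoupling you write down (leading term $\mu(A_1)\mu(A_2)\int\Phi_1\Phi_2\,d\nu$) is not what the paper's hypothesis produces. The paper handles this coupling by a two-stage integration by parts (once in $u$, extracting $S^{B_2}_u(G_{s_1}y)$, then in $s_1$, extracting $S^{B_1}_{s_1}(y)$), at each stage invoking the pointwise control on $y\in\brY$ coming from (A3'); this, rather than factoring, is how the exponent $d(\beta-1)$ arises.

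Two secondary points you flag but do not resolve are indeed substantive and both are settled in the paper's argument: (i) the Edgeworth correction terms $P_{p_1,p_2}$ with $p_1+p_2\ge 1$ do not factor and must be shown negligible; the paper does this through the same iterated integration by parts (the bound \eqref{Kest} and the trivial loss $N^{-(p_1+p_2)/2}$ suffice); (ii) passing from (A3'), which bounds the \emph{box} integrals $S_T^B$, to bounds on Gaussian-weighted averages such as $\int\fg(s/\sqrt M)B(G_{-s}y)\,ds$ requires an additional summation-by-parts/IBP argument; this is exactly why the estimates $\cK_{p_1,p_2}$ and $\cK_{p_1,p_2}'$ appear in the paper. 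In short: your plan identifies the correct ingredients and the correct places where the gain of a full factor $N^{d(\beta-1)/2}$ must come from, but the specific reduction you use (fiber change of variable then (A1')) is not compatible with (A1') as stated, and the decoupling you assert does not hold under the paper's hypothesis. Replacing the fiber change of variable by the $F^N$-shift and replacing the factorization claim by the iterated integration by parts brings your outline into line with a complete argument.
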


The proofs of the above results use integrations by parts combined with various versions of (A1) and (A3).
The exponents and the ideas of the proofs are similar to those appearing in \cite{DLN19}, section 4.

\subsection{Proof of Theorem \ref{thm:mixingspeed}. Case of $d = 1$}
Let $\psi$ be a {  $C^{\infty}$ function such that $0\leq \psi(s)\leq 1,$ 
$\psi(0)=0$ and $\psi(1)=1$.
Given $L > 0$, let
$$\psi_L(s)=
\begin{cases}
\psi(s+L+1) & \text{ if } s \in [-L-1, -L]\\
1 & \text{ if } s \in (-L, L)\\
1 - \psi(s-L) & \text{ if } s \in [L, L+1]\\
0 & \text{ otherwise.}
\end{cases}$$}
By Corollary \ref{CrProductReduction} and (A4),
it suffices to consider the case
\begin{equation}
\label{HProduct}
H_j(x,y)=A_j(x) B_j(y)\text{ where }\nu(B_j)=0. 
\end{equation}
with $A_j, B_j \in C^{k_3}$. Without loss of generality we can assume $k_3 \geq k_2$, where $k_2$ is given by (A1).

Let $L=N^{1/2+\delta}.$ Then
$$ \rho_{H_1, H_2}(N)=
\iint A_1(x) A_2(f^N x) B_1(y) B_2(G_{\tau_N(x)}y)d\mu (x) d\nu (y)
$$
\begin{equation}
\label{truncintegral}
=
\iint A_1(x) A_2(f^N x) B_1(y) B_2(G_{\tau_N(x)}y) \psi_L(\tau_N(x)) d\mu (x)  d\nu (y)
\end{equation}
$$+
\iint A_1(x) A_2(f^N x) B_1(y) B_2(G_{\tau_N(x)}y) (1-\psi_L(\tau_N(x))) d\mu (x) d\nu (y).
$$
The integrand in the last line is zero unless $|\tau_N(x)|\geq L,$ so by (A2) the last line is
$$ {\rm O}(\|H_1\|_{C^0} \|H_2\|_{C^0} N^{-r_2}) $$
and so we need only to bound \eqref{truncintegral}.
First, observe that we can restrict the integral to $\brY$, the set of points where 
$$|S^{B_2}_t(y)|<L^{d\beta} = L^{\beta}\text{ for }t\in [-L, L]. $$
Indeed, by (A3), the integral over $Y \setminus \brY$ is in
\begin{equation}
\label{eq:a3err}
{\rm O}(\|H_1\|_{C^{0}} \|H_2\|_{C^{0}} L^{-r_3})
\end{equation}
and so is negligible.
Next observe that \eqref{truncintegral}, restricted to $ \brY$ is of the form 
$$ \int_{\brY} \cI_{A_1, A_2, \phi_y}(N) d\nu(y) \quad\text{with}\quad
\phi_y(s) =B_1(y) B_2(G_s y) { \psi}_L({s}). $$
Now by \eqref{eq:rcond}, $r_1 \geq 1$ and so
by (A1), the above expression can be replaced by
$$  
N^{-1/2}\int_{\brY} \left(\int_{-\brL}^\brL \phi_y(s) \cE_1(s/\sqrt{N}) ds \right) d\nu(y) $$
with error 
\begin{equation}
\label{eq:Edgeworthinuse}
 {\rm o}\left(\|A_1\|_{C^{k_1}}\|B_1\|_{C^{k_0}}\|A_2\|_{C^{k_1}}\|B_2\|_{C^{k_2}} {\brL}N^{-1} \right)
= {\rm o}(N^{\frac{\beta -1 }{2} + \delta})
\end{equation}
where
 $\brL=L+1$.
Integrating by parts, we obtain
$$ \int_\brY \left(\int_{-\brL}^\brL \phi_y(s) \cE_1(s/\sqrt{N}) \frac{ds}{\sqrt{N}} \right) d\nu(y) $$
$$=
-\int_\brY \left(\int_{-\brL}^\brL \cE_1'(s/\sqrt{N}) \tS_y(s) \frac{ds}{N} \right) d\nu(y) 
+{\rm O}\left(\|H_1\|_{C^0} \|H_2\|_{C^0} L \fg(L/\sqrt{N}) \right) $$
where $\tS_s(y)=B_1(y) \int_0^s \psi_L(u) B_2(G_u y) du.$
Since $$ \tS_s 1_{|s| \leq L}=B_1(y) S_s^{B_2}(y) 1_{|s| \leq L}$$ 
it follows from the definition of $\brY$
that the last integral is
$${\rm O}\left(\|A_1\|_{C^{k_1}} \|B_1\|_{C^{0}} \|A_2\|_{C^{k_1}} \|B_2\|_{C^{k_3}} \frac{L^{1+\beta}}{N}  
\right). $$
This completes the proof of the theorem. 

\subsection{Proof of Theorem \ref{thm:mixingspeed}. Case of $d \geq 2$}

We follow the approach of the one dimensional case. 
Let us assume \eqref{HProduct}
(the general case follows from Corollary \ref{CrProductReduction}).
Now $\tau \in \mathbb R^d$ and so we define
$$
\psi_L(s) = \prod_{j=1}^d \psi_L(s_j) \text{ for } s = (s_1,...,s_d)
$$
Let $\bar Y$ be defined as
$$\brY=\{y: |S_t^{B_2}(y)|<L^{d\beta} \text{ for }t\in [-L, L] \}. $$
Next we claim
$$
\rho_{H_1, H_2}(N) \approx N^{-d/2}\int_{\brY} \left(\int_{s \in [-\brL,\brL]^d} \phi_y(s) \cE_{r_1}(s/\sqrt{N}) ds \right) d\nu(y)
$$
where $a_N \approx b_N$ means $|a_n - b_N| =  o(\| H_1\|_{C^{k_1}}  \| H_2\|_{C^{k_3}}N^{d \frac{\beta -1}{2} + \eps})$. 
Indeed, repeating the argument for $d=1$, the error term
\eqref{eq:a3err} remains valid and the
error term corresponding to \eqref{eq:Edgeworthinuse} is 
${\rm O}(\brL^dN^{- (d+r_1)/2})$ which is in $o(N^{d(\beta - 1) /2 + \delta})$ by the assumption
\eqref{eq:rcond}.

Performing $d$ integrations by parts, one in each coordinate direction, we conclude
$$
\rho_{H_1, H_2}(N) \approx - N^{-d}\int_{\brY} \left(\int_{s \in [-\brL,\brL]^d} \tS_s(y) \frac{\partial^d}{\partial s_1 ... \partial s_d} \cE_{r_1}(s/\sqrt{N}) ds \right) d\nu(y).
$$
Now by the definition of $\brY$,
$$\rho_{H_1, H_2}(N) = {\rm O}(\| H_1\|_{C^{k_1}}  \| H_2\|_{C^{k_3}}  N^{-d}L^{d(1 + \beta)}),$$
and the theorem follows.

\subsection{Proof of Theorem \ref{thm:mixingspeed2}. Case of $d=1$}
\label{sec:devergdim1}

Assume \eqref{HProduct} (the general case follows from Corollary \ref{CrProductReduction}).

For fixed $y$, let us write 
$$
\sigma_{N} = \sigma_{N} (y)= \int H_1 ( F^{N}(x,y) )H_2 ( F^{2N}(x,y)) d \mu(x)
$$
so that
$$ \rho_{H_1,H_2}(N) = \zeta(H_1 (H_2\circ F^{N}))=\int \sigma_N(y) d\nu(y). $$

We will
prove that for any $\delta >0$ and for any $y \in \brY$,
\begin{equation}
\label{Variancebound}
 \sigma_{N} = {\rm o} (N^{{\beta - 1}+\delta})
\end{equation}
where $\brY$ (to be defined later) satisfies 
\begin{equation}
\label{eq:brY}
\nu (\brY) > 1 - N^{-100} 
\end{equation}
(and so the contribution of its complement is negligible).
As in the case of Theorem \ref{thm:mixingspeed}, the constant in the convergence in \eqref{Variancebound} can be bounded above by 
$$C_{\delta} \| A_1 \|_{C^{k_1}} \| A_2 \|_{C^{k_1}}
 \| B_1 \|_{C^{k_3}} \| B_2 \|_{C^{k_3}}. $$
 To simplify formulas, we do not indicate this dependence in the sequel.

Denote
$$
Y_{L,\eta} =
\{ y \in Y: \exists t \in \reals: |t| \in [L^{\eta}, L]: |S_t^{B}| > t^{\beta + \eta}\}.
$$
Next we claim that for any $\eta >0$ and for any $M$ there is some $C$ so that $\nu(Y_{L,\eta}) <C L^{-M}$.
To prove this claim, observe that for $y \in Y_{L,\eta}$ there is some $t_* = t_*(y)$ with 
$|t_*| \in [L^{\eta}, L]$ and $|S_{t_*}^{B}(y)| >  t_*^{\beta + \eta}$. 
Then $|S_{\lfloor t_* \rfloor }^{B}(y)| >  \frac12 \lfloor t_* \rfloor^{\beta + \eta}$
and so 
$$
Y_{L,\eta} \subset \bigcup_{k=\lfloor L^{\eta} \rfloor}^{\lceil L \rceil} Y_{L, \eta, k}, \text{ where }
Y_{L, \eta, k} = \left\{ y \in Y: |S_k^{B}(y)| > \frac12 k^{\beta + \eta} \text{ or } |S_{-k}^{B}(y)| > \frac12 k^{\beta + \eta}\right\}.
$$
Now we apply (A3'), with $M$ replaced by $(M+1)/\eta$ to conclude
$$\nu(Y_{L, \eta, k}) < 2C k^{-(M+1)/\eta} < C L^{-M-1}$$ 
for all $k\geq \lfloor L^{\eta} \rfloor$.
The claim follows.

Next, define
$$
\brY = Y \setminus \bigcup_{l=0,1,...,\lfloor N \rfloor} G_l^{-1} (Y_{N^{1/2+ \eps},\delta/4} )
$$
with a small $\eps = \eps(\delta)$.
By the previous claim, $\brY$ satisfies \eqref{eq:brY}.

Denote
$L_1 = N^{1/2 + \eps}$, $L_2 = 2N^{1/2 + \eps}$ and $\bar L_i = L_i+1$.
We start by computing

$$
\sigma_{N} = e_1 + $$
$$ 
+ \int A_1(f^{N}(x)) A_2(f^{ 2N}(x)) B_1(G_{\tau_{N}} (y)) B_2(G_{\tau_{2N}} (y)) 
\psi_{L_1}(\tau_{N}) \psi_{L_2}(\tau_{2N } ) d \mu(x)
$$
$$ = e_1 +
\mathcal I_{1,A_1,A_2,\phi_{y,1}, \phi_{y,2} }(N, 2N)
$$
where 
$$
\phi_{y,i}(s) = B_i(G_s(y)) \psi_{L_i}(s),
$$
and the error term $e_1$ satisfies 
\begin{equation}
\label{eq:e1}
|e_1| = {\rm O}\left(N^{-r_2}\right) = {\rm o}(N^{\beta -1})
\end{equation}
by (A2).

Now using (A1'), we derive
$$
\sigma_{N} = e_1 + e_2 +
\sum_{p_1, p_2 = 0}^{ r_1} \frac{1}{N^{\frac{p_1+p_2+{2}}{2}}} 
\mathcal J,
$$
where
$$
\mathcal J = 
\int_{-\brL_1}^{\brL_1}
\phi_{y,1}(s_1) \fg \left( \frac{s_1}{\sqrt{N}} \right) 
\int_{-\brL_2}^{\brL_2}
\phi_{y,2}(s_2) \fg \left( \frac{s_2 - s_1}{\sqrt{N}} \right) P_{p_1,p_2}^{1, A_1, A_2}  
\left( \frac{s_1}{\sqrt{N}}, \frac{s_2-s_1}{\sqrt{N}}\right) ds_2
ds_1,
$$
and where by the error term in (A1') and by \eqref{eq:r1'cond}, $e_2$ satisfies
\begin{equation}
\label{eq:e2}
|e_2| = 
{\rm O}(\brL_1 \brL_2 N^{-1/2}N^{-(1+ r_1)/2}) = 
{\rm O}\left(N^{2\eps-r_1/2}\right) = {\rm o}(N^{\beta -1 + \delta}).
\end{equation}
Next, we write the integral w.r.t. $s_2$ in $\cJ$ as
$$\cJ_1 + \cJ_2 = 
\int_{s_1-N^{1/2 + \eps}}^{s_1+N^{1/2 + \eps}}  (...) ds_2 + 
\int_{s_2 \in [-\brL_2, \brL_2] \setminus [s_1- N^{1/2 + \eps}, s_1 +N^{1/2 + \eps}]} (...) ds_2 .
$$
The integrand in $\cJ_2$ is bounded by a polynomial term times
$\fg (N^{\eps})$ and so $\cJ_2$ is negligible. Now let us write
$$
\partial_2 (P \fg) (x,y) = \frac{\partial}{\partial y} (P (x,y) \fg (y)).
$$
Then using integration by parts in $\cJ_1$ we conclude that 
\begin{equation}
\label{eq:firstsumbyparts}
\sigma_{N} \approx -
\sum_{p_1, p_2 = 0}^{ r_1} \frac{1}{N^{\frac{p_1+p_2+{ 3}}{2}}} \int_{-\brL_1}^{\brL_1}
\phi_{y,1}(s_1) \fg \left( \frac{s_1}{\sqrt{N}} \right) 
\cK_{p_1, p_2} (s_1) ds_1,
\end{equation}
where
$$ \cK(s_1) = 
\cK_{p_1,p_2} (s_1) := 
$$
$$
\int_{s_1-N^{1/2 + \eps}}^{s_1+N^{1/2 + \eps}}{S}^{ B_2}_{s_2 - s_1}(G_{s_1}y) 
\left[{ \partial_2 \left(P_{p_1,p_2}^{1, A_1, A_2} \fg \right) }
 \left( \frac{s_1}{\sqrt{N}}, \frac{s_2-s_1}{\sqrt{N}}\right) \right]  ds_2
$$
$$
=\int_{-N^{1/2 + \eps}}^{N^{1/2 + \eps}} S^{B_2}_{u}(G_{s_1}y) 
\left[{ \partial_2 \left(P_{p_1,p_2}^{1, A_1, A_2} \fg \right) }
\left( \frac{s_1}{\sqrt{N}}, \frac{u}{\sqrt{N}}\right) \right]  d u
$$
and $\approx$ means that  the difference between the two sides is in ${\rm o}(N^{\beta -1+\delta})$.

Using the fact that $y \in \brY$ and assuming that $\eps = \eps (\delta)$ is small enough, we have
\begin{equation}
\label{Kest}
\cK_{p_1,p_2} (s_1) = {\rm O}(N^{\frac{1 + \beta}{2} + \delta/2})
\end{equation}
for any $p_1, p_2$. If $p_1 + p_2 \geq 1$, then by \eqref{Kest}, the term corresponding to $p_1, p_2$ in \eqref{eq:firstsumbyparts}
is 
$$
{\rm O}(N^{-2} N^{1/2 + \eps} N^{\frac{1+ \beta}{2} + \delta/2}) = o (N^{\beta - 1 + \delta}).
$$
Next, we claim
\begin{equation}
\label{KestPrime}
\cK_{0,0}' (s_1) = {\rm O}\left( N^{\frac{\beta}{2} + \delta/2}\right). 
\end{equation}
Note that by (A1'), $P^{1,A_1,A_2}_{0,0}(x,y) = \mu(A_1) \mu(A_2)$ and so
$$
\cK_{0,0}' (s_1) = \mu(A_1) \mu(A_2)
\int_{-N^{1/2 + \eps}}^{N^{1/2 + \eps}} 
\left[ \frac{\partial}{\partial s_1} S^{B_2}_{u}(G_{s_1}y) \right] \fg' \left(  \frac{u}{\sqrt{N}}\right)
   d u
$$
$$
= \mu(A_1) \mu(A_2)
\int_{-N^{1/2 + \eps}}^{N^{1/2 + \eps}} 
 B_2 (G_{s_1+u} y) \fg' \left(  \frac{u}{\sqrt{N}}\right)
   d u
$$
$$ 
- \mu(A_1) \mu(A_2)
\int_{-N^{1/2 + \eps}}^{N^{1/2 + \eps}} 
B_2(G_{s_1} y)  \fg' \left(  \frac{u}{\sqrt{N}}\right)
   d u.
$$
The integral in the penultimate line is 
${\rm O}\left(N^{\frac{\beta}{2}+\delta /2}\right)$ since we 
can perform one more integration by parts with respect to $u.$ 
The integral in the last line is equal to
$$\sqrt N B_2(G_{s_1} y) [\fg(N^{\eps}) - \fg(-N^{\eps})],$$
which decays rapidly (i.e. faster than any polynomial)
in $N$ and so is negligible.
Thus we have verified \eqref{KestPrime}.

Now we use \eqref{KestPrime} and an integration by parts with respect to $s_1$ to conclude that the term corresponding to $p_1 = p_2 = 0$
in \eqref{eq:firstsumbyparts} is 
$$
\approx N^{-3/2}
 \int_{-\brL_1}^{\brL_1}
S^{B_1}_{s_1}(y) 
\frac{\partial}{\partial s_1}
\left( 
\fg \left( \frac{s_1}{\sqrt{N}} \right) 
\cK_{0,0} (s_1) 
\right) ds_1.
$$
Now the definition of $\brY$ together with \eqref{Kest} and \eqref{KestPrime} imply that the last expression is 
$O(N^{\beta - 1 + \delta})$ which completes the proof of \eqref{Variancebound}.

We remark that the bound \eqref{KestPrime}
can be derived in case $p_1 + p_2 \geq 1$ as well. This was not needed in case $d=1$ but will be needed in case $d \geq 2$ which we discuss next.

\subsection{Proof of Theorem \ref{thm:mixingspeed2}. Case of $d\geq 2$}
\label{sec:devergdim2}
Assume \eqref{HProduct} (the general case follows from Corollary \ref{CrProductReduction}).

We proceed as in the case of $d=1$. That is, we need to show that
\begin{equation}
\label{thm:2sigmabd}
\sigma_N = {\rm o}(N^{d(\beta -1) + \delta})
\end{equation}
for $y \in \brY$ where $\brY$ satisfies
\begin{equation}
\label{eq:100d}
\nu(\brY) > 1- N^{-100d}.
\end{equation}
First, we obtain $|e_1|= {\rm O}(N^{-r_2}) = {\rm o}(N^{d(\beta -1)})$
as in \eqref{eq:e1}.
Similarly, \eqref{eq:e2} reads as
$$|e_2| = {\rm O}(\brL_1^d \brL_2^d N^{-d/2} N^{-(d+r_1)/2}) = 
{\rm O}(N^{d \eps -r_1/2})
={\rm o}(N^{d(\beta -1) + \delta})$$
by \eqref{eq:r1'cond} and by assuming that $\eps = \eps(\delta,d)$ is small.
Next, we write
$$
\bar \partial_2 (P \fg) (x,y) = \frac{\partial^d}{\partial y_1 ... \partial y_d} (P (x,y) \fg (y)).
$$
Then as in \eqref{eq:firstsumbyparts}, we derive
\begin{equation}
\label{eq:firstsumbyparts2}
\sigma_{N} \approx -
\sum_{p_1, p_2 = 0}^{ r_1} {N^{- \frac{p_1+p_2+{ 3d}}{2}}} \mathcal J_{p_1,p_2},
\end{equation}
where $\approx$ means that the difference between the two sides is in $o(N^{d(\beta -1)+\delta})$ and 
$$\cJ_{p_1,p_2} = \int_{s_1 \in [- \brL_1, \brL_1]^d}
\phi_y(s_1) \fg \left( \frac{s_1}{\sqrt{N}} \right) 
\cK_{p_1, p_2} (s_1) ds_1,
$$
where
$$ 
\cK_{p_1,p_2} (s_1) = 
$$
$$
\int_{u \in [-N^{1/2 + \eps},N^{1/2 + \eps}]^d} 
S^{B_2}_{u}(G_{s_1}y)   
\left[{ \bar \partial_2 \left(P_{p_1,p_2}^{1, A_1, A_2} \fg \right) }
\left( \frac{s_1}{\sqrt{N}}, \frac{u}{\sqrt{N}}\right) \right]  du,
$$
and for $u \in \reals^d$,
$$S_u^B(\tilde y)
= \int_{0 \leq v_i \leq |u_i|} B(G_{v_1 sgn(u_1), ..., v_d sgn(u_d)}(\tilde y)) dv_1...dv_d
$$
where $sgn$ is the sign function ($sgn(w) = -1$ if $w<0$ and $sgn(w) = 1$ if $w >0$).
For $I = \{ i_1,...,i_{|I|}\} \subset \{ 1,2,...,d\}$, let us
write 
$$
\partial^{I} = \frac{\partial }{\partial s_{1,i_1} ... \partial s_{1,i_{|I|}}}, \quad \bar \partial =
\partial^{\{ 1,...,d\}}.
$$
We use $d$ integrations by parts with respect to the variables $s_{11}, ..., s_{1d}$ to write
\begin{equation}
\label{eq:secondsumbyparts2}
\cJ_{p_1,p_2}  =
\int_{s_1 \in [- \brL_1, \brL_1]^d} S_{s_1}^{B_1} (y) 
 \bar \partial \left[\fg \left( \frac{s_1}{\sqrt{N}} \right) 
\cK_{p_1, p_2} (s_1) \right] ds_1.
\end{equation}
We will show that for any $I \subset \{1,...,d\}$ and for any $p_1, p_2$,
\begin{equation}
\label{eq:00est}
| \partial^{I} \cK_{p_1,p_2}| \lesssim N^{\frac{d}{2}(\beta + 1) - \frac{|I|}{2}}
\end{equation}
where
$a_N \lesssim b_N$ means that $a_N < b_N N^{\delta /2}$ (assuming that $\eps = \eps(\delta)$ is small
enough). 
Assume first that \eqref{eq:00est} hold. Then observe that
$$
\left| \bar \partial \left[\fg \left( \frac{s_1}{\sqrt{N}} \right) 
\cK_{p_1, p_2} (s_1) \right] \right|
\lesssim N^{\frac{d \beta}{2}}.
$$
Substituting this estimate to \eqref{eq:secondsumbyparts2}, we obtain
$$
\left| \cJ_{p_1,p_2} \right| \lesssim N^{d/2} N^{\frac{d \beta}{2}} 
N^{\frac{d \beta}{2} },
$$
which, implies \eqref{thm:2sigmabd}.
Thus it remains to prove
\eqref{eq:00est}.

Assume that $\fg$ is the standard Gaussian density (if this is not the case, we can 
compute all integrals on a parallelepiped of side length $cN^{1/2 + \eps}$, then apply a linear change of variables
to reduce to the case of standard Gaussian).
To prove \eqref{eq:00est} we write 
$$ h = \bar \partial_2 \left(P_{p_1,p_2}^{1, A_1, A_2} \fg \right).$$
Recall that $I = \{ i_1, ..., 1_{|I|} \}$, the set of indices $i$ such that we are differentiating with respect to $s_{1,i}$, is given.
We need to differentiate the integrand in $\cK$, which is a product. 
Let $I' = \{ i'_1, ..., i'_{|I'|} \}  \subset I$
denote the set of indices $i'$ so that we differentiate
the term $S_u^{B_2}(G_{s_1}(y))$ with respect to $s_{1,i'}$. For $i \in I \setminus I'$, we differentiate 
$h$ with respect to $s_{1,i}$.
We also write $J = \{1,...,d \} \setminus I$
and $J' = \{1,...,d \} \setminus I'$.
Performing the differentiation, we find

 \begin{align}
&\partial^{I} \cK_{p_1,p_2}  = \nonumber \\
& \sum_{I': I' \subset I} \int_{u \in [-N^{1/2 + \eps},N^{1/2 + \eps}]^d}
\int_{w_{j'} \in [0, |u_{j'}|] \text{ for }j' \in J'}
\sum_{\delta_{i'} \in \{0, 1\} \text{ for }i' \in I'} (-1)^{|I'| - \sum \delta_{i'}} \nonumber \\
& B_2(G_{(i': s_{1i'} + \delta_{i'} u_{i'};j': s_{1j'} + w_{j'} sgn(u_{j'}))} (y) ) 
\left[ \partial^{I \setminus I'} h  \left(\frac{s_1}{\sqrt N}, \frac{u}{\sqrt N} \right)  \right] 
dw_{j'} du, \label{eq:prodrule}
\end{align}
where in the subscript of $G$ the notation $(i': a_{i'};j': b_{j'})$ means that for coordinates $i' \in I'$ we use $a_{i'}$
and for $j' \in J'$, we use $b_{j'}$. 
Note that
\begin{equation}
\label{eqhtilde}
\partial^{I \setminus I'} h  \left(\frac{s_1}{\sqrt N}, \frac{u}{\sqrt N} \right)
=  N^{- \frac{|I|- |I'|}{2}} \tilde h  \left(\frac{s_1}{\sqrt N}, \frac{u}{\sqrt N} \right),
\end{equation}
where 
$$
  \tilde h(x,y) =
\frac{\partial^{|I| -|I'|}}{\partial x_{i_1'} ... \partial x_{i'_{|I'|}}}
 \frac{\partial^d}{\partial y_1 ... \partial y_d} (P (x,y) \fg (y)).
$$

Now assume there is some $i'$ so that $\delta_{i'} = 0$. Then $B_2(...)$ does not depend on $u_{i'}$
and so performing the integral with respect to $u_{i'}$ first, we obtain
\begin{equation}
\label{eq:delta0}
 \int_{u_i \in [-N^{1/2 + \eps},N^{1/2 + \eps}]} 
 \tilde h  \left(\frac{s_1}{\sqrt N}, \frac{u}{\sqrt N} \right)
 du_i 
\end{equation}
$$
=\sqrt N \sum_{a = 1,2} (-1)^{a} \tilde h_i 
\left( \frac{s_1}{\sqrt N}, \left( \frac{u_1}{ \sqrt N}, ..., \frac{u_{i-1}}{ \sqrt N},  (-1)^{a} N^{\varepsilon}, 
\frac{u_{i+1}}{ \sqrt N}, ..., \frac{u_{d}}{ \sqrt N} \right)  \right),
$$
where
$$
\tilde h_i (x,y) = 
\frac{\partial^{|I| -|I'|}}{\partial x_{i_1'} ... \partial x_{i'_{|I'|}}}
 \frac{\partial^{d-1}}{\partial y_1 ... \partial y_{i-1} \partial y_{i+1} ... \partial y_d} (P (x,y) \fg (y)).
$$
Recalling that $\fg (y) =  \frac{1}{(2 \pi)^{d/2}} \exp \left(- \sum_{i=1}^d y_i^2/2 \right) $,
we see that $\tilde h_i(x,y)$ decays rapidly as $y_i \to \infty$ (i.e. faster than any polynomial).
Since we have $|y_i| = N^{\eps}$, \eqref{eq:delta0} decays rapidly as $N \to \infty$. Thus this term, even
when integrated with respect to all other variables, decays rapidly and consequently we can neglect all terms 
in \eqref{eq:prodrule} where there is some $i'$ so that $\delta_{i'} = 0$.

It remains to study the case
when $\delta_{i'} = 1$ for all $i' \in I'$.  Then we perform the integrals in \eqref{eq:prodrule} with respect to
$w_{j'}, j' \in J' $ and we integrate by parts with respect to $u_{i'}, i' \in I'$ to obtain that
$$
| \partial^{I} \cK_{0,0}  
- 
\mathcal I |
$$
decays rapidly as $N \to \infty$, where
$$
\mathcal I = \int_{u_{j'}, j' \in J'}  
\int _{u_{i'}, i' \in I' } 
S^{B_2}_{b} (y)
\left[ \partial^{I} h  \left(\frac{s_1}{\sqrt N}, \frac{u}{\sqrt N} \right)  \right]
du_{i'} du_{j'} 
$$
and 
$$
b = (i': N^{1/2+ \eps}, j': u_{j'}).
$$
As in \eqref{eqhtilde},
we have
\begin{equation}
\label{defhhat}
\partial^{I } h  \left(\frac{s_1}{\sqrt N}, \frac{u}{\sqrt N} \right)
=  N^{- \frac{|I|}{2}} \hat h  \left(\frac{s_1}{\sqrt N}, \frac{u}{\sqrt N} \right)
\end{equation}
where 
$$
  \hat h(x,y) =
\frac{\partial^{|I|}}{\partial x_{i_1} ... \partial x_{i_{|I|}}}
 \frac{\partial^d}{\partial y_1 ... \partial y_d} (P (x,y) \fg (y)).
$$
Note that we can assume $|S^{B_2}_{b }| \lesssim N^{d \beta /2}$.
Indeed, we can subdivide the rectangular box with opposite corners $0$ and $b$ into small cubes of side length $N^{\eps}$ 
and we can assume that the integral of $G_s(y)$ over all of the boxes is smaller than $N^{d\eps\beta}$ for $y \in \brY$
by (A3') ($\brY$ satisfies \eqref{eq:100d} similarly to the case $d=1$). 
Combining this observation with \eqref{defhhat}, we conclude
$$
|\mathcal I| \leq N^{\frac{d \beta - |I|}{2 }}
\int_{ u\in [-N^{1/2 + \eps}, N^{1/2 + \eps}]} \| \hat h\|_{\infty} du
\leq C  N^{\frac{d (\beta +1) - |I|}{2 } + \delta/2}
$$
if $\varepsilon (\delta)$ is small enough.
This completes the proof of
\eqref{eq:00est} and so the theorem follows.


\section{Toral translations and related systems}
\label{ScToral}

\subsection{Rapid mixing}
\label{SSToral}
Let $f$ be an Axiom A 
diffeomorphism, and $\mu$ be a Gibbs measure with H\"older potential.
Let $Y=\Tor^m$ and $G_t$ be a $d$-parameter flow:
$\DS G_{(t_1, \dots, t_d)}(y)=y+\sum_{j=1}^d \alpha_j t_j $
for some $\alpha_1,\dots ,\alpha_d\in \reals^m.$ 
Note that $G_t$ has discrete spectrum, so it is far from being mixing.
However, according to \cite{Dol02} the mixing properties of the corresponding skew products are typically 
much better than the results obtained in Section \ref{ScMix-MF} for the case of the mixing fibers.
Namely, let $\Pi$ be the linear subspace generated by 
$\alpha_1, \dots, \alpha_d.$ We say that $\Pi$ is {\em Diophantine} if there exist numbers $K, s$ such that 
for any unit vector $v\in \Pi$ for any $k\in \integers^m$ we have
$\DS  |\langle v, k \rangle|\geq K |k|^{-s}. $

\begin{proposition}
\label{PrToral}
(\cite{Dol02})
If $\Pi$ is Diophantine, then $F$ is rapidly mixing except for the set $\tau: X\to \Pi$ lying in an infinite 
codimension submanifold.
\end{proposition}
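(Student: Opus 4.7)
The natural strategy is to exploit the fact that $G_t$ has pure point spectrum so that the correlations of $F$ reduce to twisted Birkhoff sums of $\tau$, which can be analyzed via Ruelle transfer operators for $f$.

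First I would decompose the fiber observables into Fourier modes. For $B_1, B_2 \in C^r(\mathbb{T}^m)$, write $B_j(y) = \sum_{k \in \integers^m} \hat B_j(k)\, e^{2\pi i \la k, y\ra}$, where $|\hat B_j(k)| \le C \|B_j\|_{C^r} |k|^{-r}$. Since $G_t y = y + \sum_j \alpha_j t_j$, each single-mode correlation factors:
$$ \int_Y e^{2\pi i \la k_1, y\ra} e^{2\pi i \la k_2, G_{\tau_N(x)} y\ra}\, d\nu(y) = \one_{k_1 + k_2 = 0}\, \exp\!\left( 2\pi i \la \theta_k,\, \tau_N(x)\ra \right), $$
where $k_2 = -k_1 = k$ and $\theta_k \in \reals^d$ has coordinates $(\theta_k)_j = \la k, \alpha_j\ra$. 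Consequently, writing $\Phi_j(x,y)=A_j(x)B_j(y)$ with zero mean in $y$, the correlation $\zeta(\Phi_1 (\Phi_2\circ F^N))$ becomes a sum over $k\neq 0$ of twisted correlations
$$ \cR_k(N) := \int_X A_1(x)\, A_2(f^N x)\, e^{2\pi i \la \theta_k, \tau_N(x) \ra}\, d\mu(x). $$

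Next I would use the Diophantine hypothesis: choosing $v = \theta_k/\|\theta_k\|$ viewed through the natural identification (or, more precisely, recognizing that $\|\theta_k\|$ equals $\|P_{\Pi} k\|$ up to a norm-equivalence, where $P_\Pi$ is the projection onto $\Pi$), the condition yields $\|\theta_k\| \ge c |k|^{-s}$ for all $k \in \integers^m \setminus \{0\}$. The heart of the argument is then to show a uniform bound
$$ |\cR_k(N)| \le C_M\, \|A_1\|_{C^r}\|A_2\|_{C^r}\, (1 + \|\theta_k\|)^{M_0} N^{-M} $$
for every $M$. Combined with the Fourier-coefficient decay $|\hat B_j(k)| \lesssim |k|^{-r}$ for $r = r(M)$ large, summation over $k$ gives rapid mixing of $F$ once $r > M_0 + s M_0 + m + \dots$, which can always be arranged by taking $r$ large in Corollary \ref{CrProductReduction}.

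The crux is thus the rapid decay of $\cR_k(N)$ uniformly in $\theta_k$. This is obtained by identifying $\cR_k(N) = \int A_1 \cdot (\cL_{\theta_k}^N A_2)\, d\mu$, where $\cL_\theta$ is the Ruelle transfer operator of $f$ weighted by $e^{2\pi i \la \theta, \tau\ra}$, acting on a suitable Hölder/anisotropic Banach space. For each fixed $\theta \neq 0$ (with a suitable non-integer cohomology condition on $\la\theta, \tau\ra$), Dolgopyat-type oscillatory cancellation forces the spectral radius of $\cL_\theta$ to be strictly less than $1$; moreover the decay rate can be made polynomial in $\|\theta\|$. The obstruction to this spectral gap is a countable family of cohomological conditions on $\tau$ indexed by resonant $k$, each cutting out a closed submanifold of finite codimension in $C^\infty(X,\Pi)$; the exceptional set is their union, hence of infinite codimension. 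This is the main obstacle of the proof, and it is precisely where the result of \cite{Dol02} is invoked. Combining the transfer-operator decay with the Diophantine lower bound on $\|\theta_k\|$ and the polynomial dependence on $\|\theta_k\|$, and then summing over $k$ against $|\hat B_j(k)|$, yields $|\zeta(\Phi_1(\Phi_2\circ F^N))| \le C_M \|\Phi_1\|_{C^w} \|\Phi_2\|_{C^w} N^{-M}$ for any $M$, i.e.\ rapid mixing of $F$.
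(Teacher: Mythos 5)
The paper does not give its own proof of Proposition \ref{PrToral}: it is cited directly from \cite{Dol02}, so there is no in-paper argument against which to compare your sketch. What you have written is a correct outline of the strategy of \cite{Dol02} specialized to the toral-translation fiber: Fourier decomposition on $\Tor^m$, reduction of the correlation to twisted Birkhoff sums $\cR_k(N)$ with twist parameter $\theta_k$ given by $(\theta_k)_j=\langle k,\alpha_j\rangle$, identification of $\cR_k(N)$ with iterates of a twisted Ruelle transfer operator, Dolgopyat cancellation to obtain decay, a Diophantine lower bound on $\|\theta_k\|$ to control the dependence of the decay rate on $k$, and a genericity argument to exclude cohomological obstructions. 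Your observation that $\|\theta_k\|$ is comparable to $\|P_\Pi k\|$ and that the Diophantine condition should be read as $\|P_\Pi k\|\ge K|k|^{-s}$ is the correct interpretation.

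Two imprecisions in the sketch are worth flagging. First, the displayed bound $|\cR_k(N)|\le C_M\|A_1\|_{C^r}\|A_2\|_{C^r}(1+\|\theta_k\|)^{M_0}N^{-M}$ cannot be right as stated: when $\theta_k$ is close to $0$ (i.e.\ $k$ nearly orthogonal to $\Pi$), the twist is nearly trivial and the decay in $N$ must degrade, so the constant has to blow up as $\|\theta_k\|\to 0$; the correct form involves a factor like $\bigl(1+\|\theta_k\|+\|\theta_k\|^{-1}\bigr)^{M_0}$. You evidently know this, since the appearance of $sM_0$ in your summability threshold $r>M_0+sM_0+m+\dots$ is precisely the contribution of $\|\theta_k\|^{-M_0}\lesssim |k|^{sM_0}$ coming from the Diophantine bound, but the displayed inequality should reflect it. (Also, a cosmetic point: with the standard normalization, $\cR_k(N)=\int(\cL_{\theta_k}^N A_1)\,A_2\,d\mu$ rather than $\int A_1\,\cL_{\theta_k}^N A_2\,d\mu$.) Second, ``the exceptional set is their union, hence of infinite codimension'' is too quick: a countable union of finite-codimension closed submanifolds need not have infinite codimension (a single hyperplane already realizes codimension $1$). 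What \cite{Dol02} actually establishes is that for each $M$ the failure of decay at rate $N^{-M}$ is confined to a set of codimension $c(M)$ with $c(M)\to\infty$, and ``infinite codimension'' is the technical notion capturing this nested structure; that limiting argument is part of the content being cited.
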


Next, we describe an application of this result.

\subsection{Constant suspensions in the fiber}
Again we take $f$ as in \S \ref{SSToral} but now we consider constant suspensions acting in the fiber.
That is let $\cG^{\bn}$ be a $\integers^d$ an exponentially mixing action on a manifold $\cY$ preserving a measure $\tnu$, let 
$Y=\cY\times \reals^d/\sim$ where $\sim$ is the identification $(\ty, z+\bn)\sim(\cG^{\bn} \ty, z).$
Let $G^t$ be the action $(\ty, z)\to (\ty, z+t)$. It preserves measure $d\nu=d\tnu\; dz.$ 



Given a $T, T^{-1}$ map as above, consider an associated action $\cF$ on $X\times \Tor^d$
given by $\cF(x, \theta)=(f x, \theta+\tau(x)).$

\begin{proposition}
\label{PrMixSusp}
Suppose that $\cF$ is rapidly mixing. Then 
\eqref{TTInvGreenKubo} holds.
\end{proposition}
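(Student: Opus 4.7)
The plan is to adapt the proof of Theorem \ref{ThLLT-MF}(a), replacing its assumption $\int\psi_G(t)\,dt<\infty$ (which fails since $G_t$ does not mix on its toral factor) by combining rapid mixing of $\cF$ with the tower structure of $Y=(\cY\times\reals^d)/{\sim}$. By Lemma \ref{lem:sob}, I reduce to product observables $\Phi_i(x,\ty,\theta)=A_i(x)\tilde B_i(\ty)C_i(\theta)$, parametrizing $Y$ as $\cY\times[0,1)^d$ with $\nu=\tnu\times\Leb$, so that $G_t(\ty,\theta)=(\cG^{\lfloor\theta+t\rfloor}\ty,\{\theta+t\})$. Decomposing $\tilde B_i=\tilde B_i^\perp+\tnu(\tilde B_i)$ with $\tnu(\tilde B_i^\perp)=0$ yields $\Phi_i=\bar\Phi_i+\Phi_i^\perp$, where $\bar\Phi_i(x,\theta)=\tnu(\tilde B_i)A_i(x)C_i(\theta)$ is independent of $\ty$; hence
\[
\zeta(\Phi_1\cdot\Phi_2\circ F^N)=I_{\bar\bar}+I_{\bar\perp}+I_{\perp\bar}+I_{\perp\perp}.
\]

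The mixed terms $I_{\bar\perp}$ and $I_{\perp\bar}$ vanish identically: integrating $\ty$ first, $\cG$-invariance of $\tnu$ together with $\tnu(\tilde B_i^\perp)=0$ eliminate them. For $I_{\bar\bar}$, the action of $F^N$ on $(x,\theta)$ coincides with $\cF^N$, so $I_{\bar\bar}=\int\bar\Phi_1\cdot\bar\Phi_2\circ\cF^N\,d(\mu\times\Leb)$; rapid mixing of $\cF$ gives $I_{\bar\bar}=(\mu\times\Leb)(\bar\Phi_1)(\mu\times\Leb)(\bar\Phi_2)+O(N^{-M})$ for any $M$. By \eqref{eq:zerofibermean}, $(\mu\times\Leb)(\bar\Phi_1)=\zeta(\Phi_1)=0$, and choosing $M>d$ (with $L_N=O(N)$ from Remark \ref{remark:drift}) gives $I_{\bar\bar}=o(L_N^{-d})$.

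The main term then reads
\[
I_{\perp\perp}=\int A_1(x)A_2(f^Nx)\,\bar H(\tau_N(x))\,d\mu(x),\quad \bar H(u):=\int_{[0,1)^d}C_1(\theta)C_2(\{\theta+u\})\,\rho(\lfloor\theta+u\rfloor)\,d\theta,
\]
where $\rho(\bn):=\tnu(\tilde B_1^\perp\cdot\tilde B_2^\perp\circ\cG^{\bn})$ decays exponentially in $|\bn|$ by exponential mixing of $\cG$. Thus $\bar H$ is bounded with exponential decay in $u$; truncating at large radius (with tail controlled by $\rho$) and applying the Portmanteau form \eqref{EqPortmanteau} of the mixing LLT gives
\[
I_{\perp\perp}=\fp(0)L_N^{-d}\mu(A_1)\mu(A_2)\int_{\reals^d}\bar H(u)\,du+o(L_N^{-d}),
\]
and the substitution $u\mapsto(\phi,\bn)=(\{\theta+u\},\lfloor\theta+u\rfloor)$ collapses $\int\bar H(u)\,du=\bigl(\int C_1\bigr)\bigl(\int C_2\bigr)\sum_{\bn\in\integers^d}\rho(\bn)$, a convergent sum.

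To match with the RHS of \eqref{TTInvGreenKubo}, the same substitution applied to $\iiiint\Phi_1(x,y)\Phi_2(\brx,G_ty)\,d\mu(x)\,d\nu(y)\,d\mu(\brx)\,dt$ produces $\mu(A_1)\mu(A_2)\bigl(\int C_1\bigr)\bigl(\int C_2\bigr)\sum_{\bn}\tnu(\tilde B_1\tilde B_2\circ\cG^{\bn})$; writing $\tnu(\tilde B_1\tilde B_2\circ\cG^{\bn})=\rho(\bn)+\tnu(\tilde B_1)\tnu(\tilde B_2)$ separates a convergent part (matching $I_{\perp\perp}$) from a formally divergent ``constant'' part carrying the factor $\tnu(\tilde B_1)(\int C_1)=0$ from \eqref{eq:zerofibermean}. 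The main obstacle is precisely this matching step: since $G_t$ does not mix on $\Tor^d$, the $t$-integral on the RHS is only conditionally convergent, and the zero-fiber-mean hypothesis is exactly what annihilates the divergent piece in the truncated limit. A subsidiary technical point is applying the Portmanteau form of the LLT to the non-compactly-supported $\bar H$, which is handled by exponential decay of $\rho$ at $o(L_N^{-d})$ cost.
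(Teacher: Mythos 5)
Your proof follows the same route as the paper: split the fiber observable into its $\tnu$-average (a function only of $(x,\theta)$ which factors through the action $\cF$) and the complementary $\tnu$-mean-zero piece, note the splitting is $F$-equivariant so the cross terms vanish, dispose of the averaged piece by rapid mixing of $\cF$, and obtain the Green--Kubo asymptotics for the mean-zero piece via the mixing LLT together with the exponential decay of fiber correlations forced by the constant-suspension structure. The only real difference is presentational: the paper works directly with the centered correlation $\rho_{\brH_1,\brH_2}(n)$ (so the leading constant cancels automatically, with no appeal to \eqref{eq:zerofibermean}), and rather than re-deriving the asymptotics for product observables as you do for $I_{\perp\perp}$, it invokes Remark \ref{RmGKRelative} to transfer Theorem \ref{ThLLT-MF}(a) to the Banach space $\BAN$ of $\tnu$-fiber-zero-mean observables, after checking that $G_t$ is exponentially mixing on $\BAN$ by writing $t=\bn+\hat t$ and integrating over $\tilde y$ first. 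Your closing observation that the $t$-integral in \eqref{TTInvGreenKubo} diverges on the $\brPhi_1\brPhi_2$ piece is a genuine point about how that formula is to be read in this setting; the paper finesses it by in effect substituting the convergent integral over the $\tH$ pieces for the right-hand side.
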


\begin{proof}
Split $H=\brH+\tH$ where
$\DS \brH(x, z)=\int H(x, \ty, z)d\tnu(\ty). $
Note that $G_t$ and hence $F$ preserves this splitting and that $\brH$ is $\integers^d$ invariant,
because $\cG^{\bn}$ preserves $\tnu$ and
$$ \int H(x, \ty, z+\bn)d\tnu(\ty)=\int H(x, \cG^{\bn} \ty, z)d\tnu(\ty)=\brH(x, z).$$
It follows that 
$$\rho_{H_1, H_2}(n)=\rho_{\brH_1, \brH_2}(n)+\rho_{\tH_1, \tH_2}(n). $$
The first term decays faster than any polynomial, because $\cF$ is rapidly mixing and
the second term is $O\left(n^{-d/2}\right)$ due to Remark \ref{RmGKRelative}.  However to apply 
the remark, we need to check that $G_t$ is exponentially mixing on the space $\BAN$ of $C^L$ 
functions such that 
$$ \int H(x, (\ty, z)) d\tnu(y)=0 \text{ for all } (x, z). $$
To check mixing, we write $t=\bn+\hat{t}$, where $\bn\in \integers^d$ and $\hat{t}$ belongs to 
the unit cube. Then
$$ \int H_1(x_1, (\ty, z)) H_2(x_2, G_t(\ty, z)) d\nu=
\iint H(x_1, (\ty, z-\hat{t})) H_2(x_2, (\cG^n \ty, z)) d\tnu(\ty) dz .$$
Integrating first with respect to $\ty$, we see that the RHS decays exponentially as needed.
\end{proof}

\section{Deviations of ergodic averages}
\label{ScDeviations}

\subsection{Mixing and deviations}
Here we recall some results about the relations of mixing and deviations of ergodic averages.


\begin{lemma}
\label{lemma:stat}
Let $X_1,X_2,...$ be a stationary sequence of random variables on a probability space 
$(\Omega, P)$ and $S_N = \sum_{k=1}^N X_k$. Assume that there are constants $C$ and 
$\rho$ such that for 
every $n$
\begin{equation}
\label{VarSt}
E(S_n^2) < C n^{2 \rho}.
\end{equation}
 Then
$S_n / n^{\max\{\rho, \frac{1}{2}\} + \eps}$ converges to zero almost surely for all $\eps >0$.
\end{lemma}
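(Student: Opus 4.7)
The plan is a dyadic Borel--Cantelli argument combined with a maximal inequality to fill in the intermediate indices. Set $\alpha := \max\{\rho, 1/2\}$. Since the hypothesis $E(S_n^2) \leq C n^{2\rho}$ also gives $E(S_n^2) \leq C n^{2\alpha}$ for $n \geq 1$, we may replace $\rho$ by $\alpha$ throughout, so without loss of generality $\rho \geq 1/2$; this ensures that $n \mapsto n^{2\rho}$ is superadditive, a property we need below. Fix $\eps > 0$.

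First I would handle the dyadic skeleton $n_k := 2^k$. By Chebyshev's inequality and the variance hypothesis,
$$
P\!\left(|S_{n_k}| > n_k^{\alpha + \eps}\right) \leq \frac{E(S_{n_k}^2)}{n_k^{2(\alpha+\eps)}} \leq C\, 2^{-2k\eps},
$$
which is summable in $k$. The Borel--Cantelli lemma yields $S_{n_k}/n_k^{\alpha+\eps} \to 0$ almost surely.

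The less routine part is controlling the oscillation $\max_{n_k < n \leq n_{k+1}} |S_n - S_{n_k}|$, and this is where I would invoke the Rademacher--Menshov/Moricz maximal inequality: whenever $E((S_{a+j} - S_a)^2) \leq \Phi(j)$ with $\Phi$ nondecreasing and superadditive, one has
$$
E\!\left[\max_{1 \leq j \leq N}(S_{a+j} - S_a)^2\right] \leq C (\log N)^2\, \Phi(N),
$$
proved by the standard dyadic decomposition of $j \in \{1,\dots,N\}$ into at most $\log_2 N$ blocks followed by Cauchy--Schwarz, requiring no independence or martingale structure. Stationarity gives $E((S_{a+j} - S_a)^2) = E(S_j^2) \leq C j^{2\rho}$, and superadditivity of $j^{2\rho}$ holds since $\rho \geq 1/2$. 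Consequently
$$
E\!\left[\max_{n_k < n \leq n_{k+1}} (S_n - S_{n_k})^2\right] \leq C k^2 n_k^{2\rho},
$$
and a second application of Chebyshev yields
$$
P\!\left(\max_{n_k < n \leq n_{k+1}} |S_n - S_{n_k}| > n_k^{\alpha+\eps}\right) \leq C k^2\, 2^{-2k\eps},
$$
again summable in $k$.

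Borel--Cantelli then produces $\max_{n_k < n \leq n_{k+1}} |S_n - S_{n_k}|\, n_k^{-(\alpha + \eps)} \to 0$ almost surely. Combining this with the dyadic statement via the triangle inequality $|S_n| \leq |S_{n_k}| + |S_n - S_{n_k}|$ for $n_k \leq n < n_{k+1}$, together with $n \geq n_k$, gives $S_n/n^{\alpha+\eps} \to 0$ almost surely, which is the desired conclusion. The only genuine technical ingredient is the Moricz maximal inequality; the rest is a textbook Borel--Cantelli scheme.
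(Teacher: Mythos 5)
Your proof is correct and follows essentially the same route as the paper's: a dyadic Borel--Cantelli scheme whose substance is a Rademacher--Menshov/Moricz-type maximal estimate obtained from a dyadic decomposition of $[1,n]$, Cauchy--Schwarz, and the second-moment hypothesis. The paper derives the maximal bound inline (writing $S_n^2 \le 2m\sum_{I\in D_m}(\sum_{k\in I}X_k)^2$ for all $n\le 2^m$ and bounding the total variance over the dyadic blocks by $\tilde C\,2^{2m\rho}$), whereas you invoke Moricz's inequality as a black box and separate the dyadic skeleton from the intermediate oscillation; this is the same argument with slightly different bookkeeping.
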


\begin{proof}
Let us assume $\rho > 1/2$ (the case $\rho\leq  1/2$ is a simple consequence).
For a positive integer $m$, let $D_m$ denote the collection of intervals of the form $I_{i,j} = [j2^i +1, (j+1)2^{i}]$ for all
non-negative integers $i,j$ so that $(j+1)2^i \leq 2^m$. By the stationarity assumption,
$$
E \left(\sum_{I \in D_m} \left(\sum_{k \in I} X_n\right)^2\right) \leq \sum_{i=0}^m 2^{m-i} E(S_{2^i}^2)
\leq C \sum_{i=0}^m 2^{m-i} 2^{2i \rho} \leq \tC 2^{2m \rho} 
$$
Now for given positive integer $n$, let $m$ be so that $2^{m-1} < n \leq 2^m$. Then the interval $[1,n]$
can be written as a disjoint union of at most $2m$ intervals from the family $D_m$. Let us denote this
collection of intervals by $D(n)$. Then by the Cauchy Schwartz inequality,
$$
S_n^2 = \left(\sum_{I \in D(n)} \sum_{k \in I} X_k\right)^2 \leq 2m \sum_{I \in D(n)} \left(\sum_{k \in I} X_k\right)^2
\leq 2m \sum_{I \in D_m} \left(\sum_{k \in I} X_k\right)^2
$$
Thus we have
$$
P(\exists n = 2^{m-1}+1,...,2^m: S_n^2 > \eta n^{2\rho + \eps})
\leq 
P(2m \sum_{I \in D_m} (\sum_{k \in I} X_k)^2 > \eta 2^{(m-1)(2\rho + \eps)}) $$
$$\leq 2m \eta^{-1} 2^{-(m-1)(2\rho + \eps)} E(\sum_{I \in D_m} (\sum_{k \in I} X_k)^2) 
\leq \tC\eta^{-1} m2^{-m \eps}
$$
Using the Borel-Cantelli lemma and the fact that $\eta >0$ is arbitrary, Lemma \ref{lemma:stat} follows. 
\end{proof}

\begin{lemma}
\label{lem:vargrowth}
Under the assumptions of Lemma \ref{lemma:stat}
suppose that
$|E(X_i X_j)|\leq C |i-j|^{-\beta}$ then \eqref{VarSt} is satisfied with 
$$\rho=\begin{cases} \frac{1}{2}, & \text{if } \beta>1, \\
1-\frac{\beta}{2} &  \text{if } \beta<1. \end{cases}. $$
\end{lemma}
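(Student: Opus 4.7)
The plan is to expand $E(S_n^2)$ as a double sum and directly estimate each term using the decay bound on correlations. Specifically, I would write
\[
E(S_n^2) = \sum_{i=1}^n E(X_i^2) + 2 \sum_{1 \leq i < j \leq n} E(X_i X_j),
\]
so that by stationarity the diagonal contributes $O(n)$ while the off-diagonal is controlled by the hypothesis $|E(X_iX_j)| \leq C|i-j|^{-\beta}$.

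Next, I would re-index the off-diagonal sum by the gap $k = j - i$, obtaining
\[
\sum_{1 \leq i < j \leq n} |i - j|^{-\beta} = \sum_{k=1}^{n-1} (n-k) k^{-\beta} \leq n \sum_{k=1}^{n-1} k^{-\beta}.
\]
The two regimes now follow from elementary facts about the partial sums of $\sum k^{-\beta}$: if $\beta > 1$ the sum converges, giving an upper bound of $O(n)$ for $E(S_n^2)$, which yields $\rho = 1/2$; if $\beta < 1$ the partial sum is $\sim n^{1-\beta}/(1-\beta)$, giving $E(S_n^2) = O(n^{2-\beta})$ and hence $\rho = 1 - \beta/2$, as required by \eqref{VarSt}.

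The argument is genuinely a direct calculation with no real obstacle; the only minor care needed is to verify that the diagonal term $\sum_i E(X_i^2)$ is absorbed into the final bound in both regimes (which is clear since $n \leq n^{2-\beta}$ when $\beta < 1$ and $n = n^{2 \cdot (1/2)}$ when $\beta > 1$), and to keep track of the convention that the hypothesis applies for $i \neq j$ while $E(X_i^2)$ is bounded by stationarity and finiteness assumed implicitly in \eqref{VarSt}. No delicate cancellation or probabilistic tool is used beyond the triangle inequality, so the proof should be only a few lines.
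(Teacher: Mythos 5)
Your proof is correct and follows essentially the same route as the paper, which simply writes $E(S_N^2)=N\,E(X_0^2)+2\sum_{n=0}^{N-1}(N-n)\,E(X_0X_n)$ using stationarity and then invokes the decay hypothesis; your reindexing by the gap $k=j-i$ is the same identity spelled out, and the two-regime estimate of $\sum_k k^{-\beta}$ is exactly what is needed.
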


\begin{proof}
\eqref{VarSt} follows since
$\DS
E(S_N^2) =N E(X_0^2)+2 \sum_{n=0}^{N-1} (N-n) E(X_0 X_n). 
$
\end{proof}

\subsection{Examples and open questions}
\label{ScExamples}
Here we describe several classes of systems satisfying our assumptions on the
base and the fiber dynamics made in previous sections.
We also present several open questions pertaining to establishing those properties in several new cases.

Mixing of the base system is required in all our results. In addition the results of Sections \ref{ScMix-MF} 
require
mixing in the fiber, so we begin with reviewing known results for mixing.

Exponential mixing is known in the following cases:
uniformly hyperbolic diffeomorphisms with Gibbs measures (\cite{Bow75, PP90});
nonuniformly hyperbolic systems admitting Young towers with exponential tails (\cite{You98});
partially hyperbolic translations on homogeneous spaces (\cite{KM99, BEG18});
contact Anosov flows \cite{Liv04} as well as Anosov flows with suitable assumptions on
Lyapunov spectrum \cite{ABV16, Ts18}; some singular hyperbolic flows \cite{AM16}; 
ergodic automorphisms of tori \cite{Kat71} and of nilmanifolds (\cite{GS14}). In all the examples of $\reals$ or $\integers$ actions
listed above, we also have multiple exponential mixing (see e.g. \cite{Dol04}) while in higher rank the multiple exponential mixing is only known for
partially hyperbolic translations on homogeneous spaces (\cite{BEG18}), (partial results for some $\integers^d$ actions are obtained in \cite{GS15}). 

Rapid mixing is known for generic Axiom A flows with Gibbs measures (\cite{Dol98-EM, Dol98-RM, FMT07}), hyperbolic flows having Young towers with exponential tails (see \cite{Mel18} and references wherein),
some singular hyperbolic flows \cite{AM19}, and generic compact group extensions of uniformly hyperbolic systems (\cite{Dol02}).

Polynomial mixing is known for nonuniformly hyperbolic diffeomorphisms and flows having Young towers with polynomial tails  (\cite{Sar02, Gou04, BBM19}),
unipotent actions (\cite{KM99, BEG18}, time changes of nilflows (\cite{FK17}), and some flows on surfaces with degenerate singularities~(\cite{FFK}). 

Additional assumptions imposed on base dynamics in various results include 
large deviations, anticoncentration, LLT and Edgeworth expansions.

An easiest way to get large deviation is to have unique ergodicity since in that case the
set in LHS  of \eqref{EqLD-Exp} is empty. A relative version of unique ergodicity is so called 
{\em uunique ergodicity} (see \cite{Dol04} for a definition),
 which holds for partially hyperbolic systems with unique measure absolutely 
continuous with respect to the unstable foliation. In this case \eqref{EqLD-Exp} holds due to \cite{Dol04}.  
Exponential large deviations also hold for non-uniformly hyperbolic systems admitting Young towers
with exponential tails for return times \cite{MN08, RBY08}, while in case the tail is polynomial, 
polynomial large deviations hold \cite{Mel09, GM14} (see also \cite{DN-Ren} where the large deviations
are discussed under a quasiindependence assumption).

Anticoncentration inequality is established 
for systems admitting Young towers provided that the return time tail has second moment
\cite{Pene09}.

The LLT is known for Axiom A diffeomorphisms with Gibbs measures (\cite{PP90}), the systems 
admitting Young tower under the assumptions that the tails admit the second moment
(\cite{SV04}) as well as flows which can be represented as suspensions of  flows
admitting nice symbolic dynamics \cite{DN-Flows} including Axiom A 
flows and certain Lorenz type attractors. 
The results of \cite{DN-Flows} can be applied to continuous time 
$T, T^{-1}$ systems  given by \eqref{TTInvFlows}.

Mixing averaged Edgeworth expansions are obtained in \cite{FP20} for systems admitting Young towers
with exponential tails. It seems that the methods of \cite{FP20} as well as \cite{DNP} could be used to
obtain the multiple expansions as well but this remains an open problem.

For fiber dynamics we require control on ergodic averages. For mixing systems such control 
can be obtain using moment estimates (cf. Lemma \ref{lemma:stat}).

Systems satisfying assumption (A3) (or (A3')) for $d=1$ include exponentially mixing systems
described above, as well as toral translations (see e.g. \cite{DFSurv}), 
 products of the last two examples \cite{CC13},
horocycle flows \cite{FF03}, translation flows 
(those flows are not smooth, however, the results of Section \ref{ScMix-EF} apply provided 
that we consider the observables which vanish near the singularities), 
 typical area preserving flows 
on surfaces (with non-degenerate singularities) \cite{For02} 
and nilflows (\cite{FlaFo}, \cite{For16}). Higher dimensional examples include 
Cartan and unipotent actions on homogeneous spaces of semisimple Lie groups (\cite{BEG18})
and multidimensional niltranslations \cite{CF15}. 

The results of this paper motivate the study of the statistical properties discussed above
for a wider class of dynamical systems. In particular, it is of interest to 

(a) construct example of systems satisfying mixing multiple Edgeworth expansion;

(b) prove mixing LLTs for partially hyperbolic systems;

(c) investigate mixing LLTs and anticoncentration bounds for parabolic systems.

\subsection{Deviations of ergodic averages for generalized $T, T^{-1}$ transformations}
 Here we illustrate the information the results obtained in this paper provide about the growth of ergodic 
sums in several special cases. In the examples below we assume that the base dynamics $f$ is given by an
Anosov diffeomorphism equipped with a Gibbs measure and for each fiber flow 
(1--10) we
give an exponent $\alpha$ such that
with probability one the ergodic sums 
of the corresponding generalized $T,T^{-1}$ transformation
grow slower than $N^{\alpha+\eps}$
for every $\eps >0$. 
This is going to be a simple
consequence of Lemmas \ref{lemma:stat} and \ref{lem:vargrowth}.
For each example we list the result that implies the assumption of 
Lemma \ref{lem:vargrowth} with a suitable $\beta$.
In case we use the results of Section \ref{ScMix-EF}, we also assume that $(f, \tau)$ 
satisfies the mixing double averaged
Edgeworth expansion of any order. Currently no examples of such systems is known but we expect
this property to hold for large class of map (cf. e.g. the computations in \cite{DNP}).

\begin{enumerate}
\item Anosov diffeomorphisms. In this case we have exponential mixing (\cite{Bow75, PP90});

(a) zero drift \thus $\alpha=3/4$ (Thm \ref{ThLLT-MF});
(b) positive drift \thus  $\alpha=\frac{1}{2}$  (Thm \ref{ThDrift}).
\item Diophantine toral translations--here $(A3')$ holds for any $\beta>0$ and so 
$\alpha=1/2$ by Thm \ref{thm:mixingspeed2}  (cf. also Prop \ref{PrToral}).
\item Product of Anosov diffeomorphisms and toral translation\thus
$\alpha=3/4$ (Thm \ref{thm:mixingspeed2}).
\item horocycle flows (see \cite{FF03})\thus Thm \ref{thm:mixingspeed2} gives
\begin{enumerate}
\item no small eigenvalues of $\Delta$, zero drift--$(A3)$ holds for any $\beta>1/2$, 
so $\alpha=\rho_1(\beta)=3/4$;
\item smallest eigenvalue of $\Delta$ is $\lambda\in \left(0, \frac{1}{4}\right)$--$(A3)$ 
holds for any $\beta>\frac{1+\sqrt{1-4\lambda}}{2}$, so 
$\alpha=\rho_1(\beta)=\frac{1+\sqrt{1-4\lambda}}{2}$.
\end{enumerate}
\item translations flows--$(A3')$ holds for any $\beta>\lambda_2$ (\cite{For02}) 
where $\lambda_2$ is the second exponent 
of Kontsevich-Zorich cocycle. So $\alpha=\rho_1(\beta)=\frac{\lambda_2+1}{2}$
(Thm \ref{thm:mixingspeed2}).
\item partially hyperbolic translations on homogenous spaces.
In this case we have exponential mixing (\cite{KM99, BEG18});

(a) zero drift\thus $\alpha=3/4$ (Thm \ref{ThLLT-MF});
(b) positive drift\thus  $\alpha=\frac{1}{2}$  (Thm \ref{ThDrift}).
\item multidimensional Cartan actions on homogenous spaces\thus
$\frac{1}{2}$
(Thms \ref{ThLLT-MF} and \ref{ThDrift}).
\item constant suspensions of Cartan actions on tori\thus $\frac{1}{2}$ (Pr \ref{PrMixSusp}). 
\item continuous time $T, T^{-1}$ system given by \eqref{TTInvFlows}
with both base flow $\phi^t$ and fiber flow $G_t$ given by geodesic flow on a unit tangent bundle
over a negatively curve manifold:
{ $\alpha = \frac{7}{8}$ 
by Example \ref{TwoFlowEx}(b) with $k=2$. In fact, Example \ref{TwoFlowEx}(b)} shows that
for all positive integers $k$, 
we can obtain a system 
with $\alpha = 1 - 2^{-k-1}$.
\item generic higher rank 
actions on Heisenberg nilmanifolds:
$\frac{1}{2}$  
(\cite{CF15} and Thm~\ref{thm:mixingspeed2}).
\end{enumerate}

\appendix
\section{Anticoncentration large deviation bounds for subshifts of finite type}
We follow the argument in \cite{DN-Ren}.

Let $(\Sigma, \sigma)$ be a subshift of finite type, 
$\mu$ be a Gibbs measure and $\tau:\Sigma\to\reals^d$ be a H\"older function of zero mean.
We assume that for each $\ba\in\reals^d\backslash\{\bf0\}$ the function $\langle \ba, \tau\rangle$ is not a coboundary.

\begin{lemma}
\label{LmTaylorZero}
(\cite{PP90}) There are constants $c_1, \delta_0$ such that for $|\xi|<\delta_0$
\begin{equation}
\label{LaplaceTau}
\mu\left(e^{\langle\xi, \tau_N\rangle}\right)\leq e^{c_1 N\xi^2}. 
\end{equation}
\begin{equation}
\label{FourierTau}
{
|\Phi_N(\xi)|
\leq e^{-c_1 N\xi^2}, \quad \text{where} \quad
\Phi_N(\xi) = \mu\left(e^{i\langle\xi, \tau_N\rangle}\right). }
\end{equation}
\end{lemma}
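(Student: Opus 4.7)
The plan is to prove both bounds via the twisted transfer operator method. Let $\phi$ be a H\"older potential whose equilibrium state is $\mu$, normalized so that the Ruelle operator $\cL_0$, defined by $(\cL_0 h)(x) = \sum_{\sigma y = x} e^{\phi(y)} h(y)$, has leading eigenvalue $1$ with eigenfunction $\mathbf{1}$ and a spectral gap on a space of H\"older functions. The starting point is the twisted family
\begin{equation*}
(\cL_\xi h)(x) = \sum_{\sigma y = x} e^{\phi(y) + \langle \xi, \tau(y)\rangle} h(y), \qquad \xi \in \complex^d.
\end{equation*}

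First I would invoke analytic perturbation theory. Since $\xi \mapsto \cL_\xi$ is analytic as a map into bounded operators on the H\"older space, and $\cL_0$ has a simple isolated eigenvalue at $1$, there is a neighborhood of the origin in $\complex^d$ on which $\cL_\xi$ has a simple leading eigenvalue $\lambda(\xi)$, analytic in $\xi$, with an analytic eigenprojector $P(\xi)$, while the remainder of the spectrum stays in a disk of radius $\theta < 1$ uniformly. Iterating the resulting spectral decomposition yields
\begin{equation*}
\mu\bigl(e^{\langle\xi,\tau_N\rangle}\bigr) = \lambda(\xi)^N\bigl(\ell(\xi) + O(\theta^N)\bigr),
\end{equation*}
for an analytic $\ell$ with $\ell(0) = 1$, and the analogous identity for $\Phi_N(\xi)$ obtained by replacing $\xi$ with $i\xi$.

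Next I would extract the Taylor expansion at the origin. Differentiating the eigenvalue equation $\cL_\xi h_\xi = \lambda(\xi) h_\xi$ yields $\lambda(0)=1$, $\nabla\lambda(0) = \mu(\tau) = 0$ (by the zero-mean hypothesis), and $D^2\lambda(0) = \Sigma$, where $\Sigma$ is the asymptotic covariance of $\tau_N/\sqrt{N}$. Hence
\begin{equation*}
\log\lambda(\xi) = \tfrac12\langle\Sigma\xi,\xi\rangle + O(|\xi|^3), \qquad \log|\lambda(i\xi)| = -\tfrac12\langle\Sigma\xi,\xi\rangle + O(|\xi|^3).
\end{equation*}
Inequality \eqref{LaplaceTau} then follows for $|\xi|<\delta_0$ small by choosing $\delta_0$ so that the cubic error is dominated by the quadratic main term; the prefactor $\ell(\xi) + O(\theta^N)$ is uniformly bounded and can be absorbed into a slightly larger $c_1$ for all $N\ge 1$ (small $N$ being handled by the trivial bound $\mu(e^{\langle\xi,\tau_N\rangle})\leq e^{CN|\xi|}$).

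For \eqref{FourierTau} the crucial point is showing $\Sigma$ is positive definite, which is precisely where the non-coboundary hypothesis enters: a classical argument (\cite{PP90}) shows that $\langle \Sigma\ba,\ba\rangle = 0$ forces $\langle \ba,\tau\rangle$ to be a H\"older coboundary, which is excluded by assumption for every $\ba\neq 0$. Once $\Sigma$ is known to be strictly positive definite, the quadratic form $-\tfrac12\langle\Sigma\xi,\xi\rangle$ dominates the cubic error on a small ball, giving $|\lambda(i\xi)|\leq e^{-c_1 |\xi|^2}$ and hence \eqref{FourierTau}. The main obstacle in the proof is the careful identification of $D^2\lambda(0)$ with the asymptotic covariance $\Sigma$ (rather than the naive variance $\mu(\tau\otimes\tau)$; the two differ by Green-Kubo correction terms arising from the eigenprojector's derivatives), together with the non-coboundary argument promoting degeneracy of $\Sigma$ to an actual H\"older coboundary; all the remaining steps are standard perturbation-theoretic bookkeeping.
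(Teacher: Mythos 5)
The paper gives no proof of this lemma; it simply cites \cite{PP90}. Your argument is the standard transfer-operator perturbation proof, which is indeed the PP90 approach, and the main structure is sound: spectral gap, analyticity of the leading eigenvalue, $\nabla\lambda(0)=\mu(\tau)=0$ from the zero-drift assumption, and positive definiteness of $\Sigma=D^2\lambda(0)$ from the non-coboundary hypothesis. The $\Phi_N$ bound also correctly exploits that the real part of $\log\lambda(i\xi)$ is the negative of the Gaussian quadratic form.

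There is one concrete gap: your claim that the prefactor and the remainder term can be ``absorbed into a slightly larger $c_1$ for all $N\ge 1$,'' with small $N$ handled by the bound $\mu(e^{\langle\xi,\tau_N\rangle})\le e^{CN|\xi|}$, does not work. The target bound $e^{c_1N|\xi|^2}$ is quadratic in $|\xi|$, so $e^{CN|\xi|}$ exceeds it whenever $|\xi|$ is small relative to $1$, regardless of $N$; more to the point, for any fixed $N$ the left side of \eqref{LaplaceTau} tends to $1$ as $\xi\to 0$ like $1+O(|\xi|^2)$, so one must match at quadratic order. The correct way to close this is to note that $g_N(\xi):=\log\mu(e^{\langle\xi,\tau_N\rangle})$ satisfies $g_N(0)=0$ and $\nabla g_N(0)=\mu(\tau_N)=0$ by the zero-mean assumption, and that its Hessian $D^2g_N(\xi)$ (the covariance of $\tau_N$ under the tilted measure $\propto e^{\langle\xi,\tau_N\rangle}\,d\mu$) is $O(N)$ uniformly over $|\xi|<\delta_0$; the latter follows from the same spectral decomposition you already use, since $N\log\lambda(\xi)$ has Hessian $O(N)$ and the correction $\log\bigl(\ell(\xi)+\lambda(\xi)^{-N}R_N(\xi)\bigr)$ has uniformly bounded derivatives. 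Taylor's theorem then gives $g_N(\xi)\le c_1N|\xi|^2$ directly, uniformly in $N\ge 1$. (Equivalently, one can show $\ell'(0)=0$ and $R_N$ vanishes to first order at $0$, so the non-leading contributions are $O(|\xi|^2)$ rather than $O(|\xi|)$ and can be absorbed as you intended.) An analogous refinement is needed for \eqref{FourierTau}: for small $N|\xi|^2$ one must check at quadratic order that $1-|\Phi_N(\xi)|$ is bounded below by $c|\xi|^2$, which again uses $\mu(\langle e,\tau\rangle^2)\ge c>0$ for unit vectors $e$ (itself a consequence of non-coboundary and full support of the Gibbs measure). Everything else in your argument is correct.
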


\begin{corollary}
There are constants $C_2, c_2$ such that 
\begin{equation}
\label{LDTau}
 \mu(|\tau_N|>L)\leq C_2 e^{-c_2 L^2/N} 
\end{equation} 
and for each unit cube $\cQ$
\begin{equation}
\label{ACTau}
\mu(\tau_N\in \cQ)\leq \frac{C_2}{N^{d/2}}.
\end{equation} 
\end{corollary}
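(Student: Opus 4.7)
Both bounds follow from Lemma \ref{LmTaylorZero} by standard Chernoff and Fourier-inversion arguments.

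For \eqref{LDTau}, the plan is to apply the exponential Markov inequality coordinate-wise. Since $\{|\tau_N|>L\}\subset\bigcup_{i=1}^{d}\{|\tau_N^{(i)}|>L/\sqrt{d}\}$, it suffices to estimate $\mu(\pm\tau_N^{(i)}>M)$ for $M=L/\sqrt{d}$. Applying \eqref{LaplaceTau} with $\xi$ times the $i$-th standard basis vector gives $\mu(e^{\xi\tau_N^{(i)}})\le e^{c_1 N\xi^2}$ for $|\xi|<\delta_0$, so Markov yields $\mu(\tau_N^{(i)}>M)\le e^{-\xi M+c_1 N\xi^2}$. Optimizing: in the range $M\le 2c_1\delta_0 N$ I take $\xi=M/(2c_1 N)$ to obtain $e^{-M^2/(4c_1 N)}$; in the range $2c_1\delta_0 N<M\le\|\tau\|_\infty N$ I take $\xi=\delta_0$ and use the bound $-\delta_0 M+c_1\delta_0^2 N\le -\delta_0 M/2$, which can be absorbed into $-c' M^2/N$ after adjusting the constant since $M/N\le\|\tau\|_\infty$; for $M>\|\tau\|_\infty N$ the event is empty.

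For \eqref{ACTau}, the idea is to dominate $\one_\cQ$ by a nonnegative majorant $\phi_\cQ$ whose Fourier transform is supported in $\{|\xi|\le\delta_0\}$, so that Fourier inversion together with \eqref{FourierTau} reduces the estimate to a Gaussian integral of size $O(N^{-d/2})$. More precisely, I will fix once and for all a Schwartz function $\phi\ge 0$ on $\reals^d$ with $\hat\phi$ supported in $\{|\xi|\le\delta_0\}$ and $\phi(x)\ge 1$ for $x\in[-1/2,1/2]^d$; such a $\phi$ can be built as $|g|^2$ with $g$ a suitably scaled band-limited Paley--Wiener function. For a unit cube $\cQ$ with center $c$, setting $\phi_\cQ(x)=\phi(x-c)$ yields $\phi_\cQ\ge\one_\cQ$ and $|\hat\phi_\cQ|=|\hat\phi|$ (both supported in $\{|\xi|\le\delta_0\}$), so Fourier inversion applied to the distribution of $\tau_N$ gives
\[
\mu(\tau_N\in\cQ)\le\mu(\phi_\cQ(\tau_N))=\frac{1}{(2\pi)^d}\int_{|\xi|\le\delta_0}\hat\phi_\cQ(\xi)\,\Phi_N(\xi)\,d\xi.
\]
Inserting \eqref{FourierTau} and computing $\int_{\reals^d} e^{-c_1 N|\xi|^2}d\xi=O(N^{-d/2})$ then yields the claim.

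The only non-routine point in the argument is the existence of the nonnegative band-limited majorant $\phi$, but this is a classical construction that does not interact with the dynamics; apart from that, both bounds reduce to the optimization of explicit integrals, so I do not anticipate further obstacles.
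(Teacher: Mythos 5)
Your proof is correct and follows essentially the same route as the paper: a Chernoff--Markov bound driven by \eqref{LaplaceTau} for the large deviation estimate, and a Fourier-inversion argument against a band-limited majorant driven by \eqref{FourierTau} for the anticoncentration estimate. Two small remarks. First, for \eqref{LDTau} the paper dispenses with the edge cases simply by noting one may assume $\sqrt N \le L \le 2c_1\delta_0 N$ after adjusting $C_2$ and $c_2$; your explicit treatment of the range $2c_1\delta_0 N < L \le \|\tau\|_\infty N$ is fine (and works because $M^2/N \le \|\tau\|_\infty M$, so choosing $c_2 \le \delta_0/(2\|\tau\|_\infty)$ absorbs the $e^{-\delta_0 M/2}$ tail), but it is not needed. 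Second, for \eqref{ACTau} the paper uses the explicit Fej\'er-kernel product $g(x)=\prod_l \bigl(1-\cos(\hdelta x_{(l)})\bigr)/(\hdelta^2 x_{(l)}^2)$, notes $g$ is bounded below by $4^{-d}$ on a small cube of fixed size $\rho$, and then covers the unit cube by finitely many $\rho$-cubes; you instead invoke an abstract nonnegative majorant $\phi\ge\one_{\cQ}$ with $\hat\phi$ supported in $\{|\xi|\le\delta_0\}$. These are equivalent: if for the given $\delta_0$ one cannot fit such a majorant over a full unit cube, one falls back on exactly the paper's covering trick. (One nitpick: the $\phi=|g|^2$ you describe need not be Schwartz if $g$ has only sinc-type decay, but Schwartz-ness is never used --- boundedness and compact support of $\hat\phi$ are all that is needed for the Fubini step.)
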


\begin{proof}
To prove the first inequality we may assume without the loss of generality that $d=1$ and that 
$\sqrt{N}\leq L\leq 2 c_1 \delta_0 N$ (we obtain the general result 
by increasing $C_2$ and decreasing $c_2.$) We estimate
$\mu(\tau_N>L)$, the bound for $\mu(\tau_N<-L)$ being similar.
We have that for each $\xi\in (0, \delta_0)$ 
$$ \mu(\tau_N>L)=\mu\left(e^{\xi\tau_N}>e^{\xi L}\right)\leq 
e^{-\xi L} \mu\left(e^{\xi \tau_N}\right)
\leq e^{-\xi L+c_1 N \xi^2}$$
Taking $\xi=\frac{L}{2c_1 N}$ we obtain the result.

It is enough to prove \eqref{ACTau}
for cubes of any fixed size $\rho$ since the unit cube can be covered by a finite number
of cubes of size $\rho.$ Let
$$ g(x)=\prod_{l=1}^d \left(\frac{1-\cos(\hdelta x_{(l)})}{\hdelta^2 x_{(l)}^2}\right)$$
where $\hdelta=\delta_0/d$ and $\delta_0$ is the constant from Lemma \ref{LmTaylorZero}.
Then 
$$\hg(\xi)=(\pi \hdelta)^d \prod_{l=1}^d \left(\left(1-\frac{|\xi|}{\hdelta}\right)1_{|\xi|\leq \hdelta} \right). $$
Hence for each $a$
$$\EXP(g(\tau_N-a))
=\int_{\reals^d} \hg(-\xi) e^{i\xi a} \Phi_N(\xi) d\xi \leq
\int_{|s|<\delta_0} \hg(s) |\Phi_N(s)| ds $$
since $\hg$ is real, positive, and supported inside the ball of radius $\delta_0.$  
Thus \eqref{FourierTau} implies that there is a constant $\hD$ such that
$$ \EXP(g(\tau_N-a))\leq \frac{\hD}{N^{d/2}}$$
On the other hand  $g(0)=\frac{1}{2^d}$ so there is a constant $\rho$ such that $g(x)>\frac{1}{4^d}$ on the
cube of size $\rho$ centered at $0.$ Hence if $\cQ$ is a cube of size $\rho$ centered at $a$ then
$$ \EXP(g(\tau_N-a))\geq \frac{\Prob(S_N\in \cQ)}{4^d}. $$
Combining the last two displays we obtain the result.
\end{proof}

We now prove the anticoncentration large deviation estimate with $\Theta(r)=e^{-c_4 r^2}. $

\begin{lemma}
\label{LmACLD2}
If $\cQ$ is a unit cube  centered at $z$, then
$$ \mu(\tau_N\in \cQ)\leq \frac{C_3}{N^{d/2}} e^{-c_3 z^2/N}. $$
\end{lemma}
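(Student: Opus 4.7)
\medskip

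The plan is to combine the Chernoff-style exponential tilt that already underlies the large deviation bound \eqref{LDTau} with the near-zero anticoncentration estimate \eqref{ACTau}. First I would dispose of the easy regime $|z| \geq 2c_1 \delta_0 N$: here the cube $\cQ$ is contained in $\{|\tau_N| \geq |z| - \sqrt d/2\}$, so \eqref{LDTau} gives a bound of order $e^{-c |z|^2/N}$ which already dwarfs $N^{-d/2}$ and hence the lemma holds trivially after enlarging $C_3$. For the remainder I assume $|z| \leq 2 c_1 \delta_0 N$ and set $\xi := z/(2c_1 N)$, so that $|\xi| \leq \delta_0$ and Lemma \ref{LmTaylorZero} applies to $\xi$.

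Next I would perform the tilt. Since $|\tau_N - z| \leq \sqrt d/2$ on $\cQ$, on this event $e^{\langle \xi,\tau_N\rangle} \geq e^{\langle \xi, z\rangle - \sqrt d |\xi|/2}$, so multiplying $1_{\tau_N \in \cQ}$ by $e^{\langle \xi,\tau_N - z\rangle + \sqrt d|\xi|/2} \geq 1$ and integrating gives
\[
 \mu(\tau_N \in \cQ) \leq e^{-\langle \xi,z\rangle + \sqrt d|\xi|/2}\, \mu\!\left(1_{\tau_N \in \cQ}\, e^{\langle \xi,\tau_N\rangle}\right) = e^{-\langle \xi,z\rangle + \sqrt d|\xi|/2}\, Z_N(\xi)\, \mu_\xi(\tau_N \in \cQ),
\]
where $Z_N(\xi) := \mu(e^{\langle \xi,\tau_N\rangle})$ and $d\mu_\xi := Z_N(\xi)^{-1} e^{\langle \xi,\tau_N\rangle}\, d\mu$ is the tilted probability measure. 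By \eqref{LaplaceTau}, $Z_N(\xi) \leq e^{c_1 N \xi^2} = e^{|z|^2/(4 c_1 N)}$, while the explicit choice of $\xi$ yields $-\langle \xi,z\rangle = -|z|^2/(2 c_1 N)$. The linear correction $\sqrt d|\xi|/2 = O(|z|/N)$ is absorbed into the quadratic term at the cost of shrinking the constant $c_3$ slightly. So, if I can show
\begin{equation}\label{planclaim}
  \mu_\xi(\tau_N \in \cQ) \leq C\, N^{-d/2} \qquad \text{uniformly for } |\xi|\leq \delta_0,
\end{equation}
the lemma follows.

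The remaining task, and the main obstacle, is \eqref{planclaim}: it is the analogue of \eqref{ACTau} for the tilted family $\{\mu_\xi\}_{|\xi|\leq \delta_0}$, with constants that do not deteriorate as $\xi$ varies. The natural route is to repeat the Fourier-analytic proof of \eqref{ACTau} replacing $\Phi_N$ by $\widetilde\Phi_N(\eta) := \mu_\xi(e^{i\langle \eta,\tau_N\rangle}) = Z_N(\xi)^{-1}\, \mu(e^{\langle\xi + i\eta,\,\tau_N\rangle})$. What is needed is a uniform Gaussian decay $|\widetilde\Phi_N(\eta)| \leq e^{-c_1' N \eta^2}$ for $|\eta|$ up to some fixed $\delta_0' > 0$. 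This is exactly the type of estimate produced by Pollicott--Ruelle perturbation of the transfer operator: for H\"older $\tau$ and a Gibbs measure with H\"older potential $\phi$, the twisted operator $\mathcal L_{\phi + \langle \zeta,\tau\rangle}$ has an isolated leading eigenvalue $\lambda(\zeta)$ depending analytically on complex $\zeta$ in a neighborhood of $0$, and standard quadratic estimates on $\log|\lambda(\xi + i\eta)| - \log|\lambda(\xi)|$ (using that $\langle \ba,\tau\rangle$ is not a coboundary for any $\ba\ne 0$) give precisely the required decay in $\eta$, locally uniformly in $\xi$. Feeding this into the same $g$-function argument used to prove \eqref{ACTau} yields \eqref{planclaim}; combining with the tilt estimate above completes the proof with $c_3 := 1/(8c_1)$ (say) and some $C_3$. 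The verification of the uniform transfer-operator estimate is the only nontrivial piece, and it is the part that genuinely uses the subshift-of-finite-type structure, matching the reference to the argument of \cite{DN-Ren}.
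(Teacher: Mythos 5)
Your plan is sound but takes a genuinely different route from the paper. The paper's proof is a short combinatorial argument that uses \emph{only} the already-established facts \eqref{LDTau}, \eqref{ACTau} and the Gibbs quasi-independence of cylinders: it splits the event $\{\tau_N\in\cQ\}$ according to whether $|\tau_{N/2}|$ or $|\tau_N-\tau_{N/2}|$ exceeds $|z|/2 - R$, factors the measure of the corresponding pairs of length-$N/2$ cylinders via $\mu(\cC'\cC'')\le K\mu(\cC')\mu(\cC'')$, and then applies \eqref{ACTau} to the sum over the ``anticoncentration half'' and \eqref{LDTau} to the sum over the ``large deviation half.'' No return to the transfer operator is needed. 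You instead use an exponential tilt, which is arguably more standard probabilistically, but it pushes the whole burden onto proving the uniform anticoncentration bound \eqref{planclaim} for the family of tilted Gibbs measures $\mu_\xi$; you correctly identify that this requires the full Pollicott--Ruelle spectral perturbation machinery (analyticity of the leading eigenvalue $\lambda(\zeta)$ for complex $\zeta$ near $0$, uniform positive-definiteness of $\nabla^2\log\lambda$, hence a uniform Gaussian decay of $\widetilde\Phi_N(\eta)$), and you leave this as the ``only nontrivial piece'' rather than carrying it out. That gap is fillable by standard arguments, so the proposal is correct in outline, but it is heavier machinery than what the paper deploys. The paper's splitting-into-halves trick also extends directly to the higher-order Lemma \ref{LmACLDs} by induction on blocks, whereas your tilting approach would need a separate multi-dimensional tilting and a matching uniform anticoncentration estimate for vectors of increments. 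On the other hand, your approach is more modular: it does not rely on the symbolic coding (one-sided cylinder structure) beyond what is needed to run spectral perturbation, and so it would adapt more easily to settings where one only has a spectral gap and not an explicit Gibbs quasi-independence of cylinder pairs.

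Two small technical cautions on the tilting route, should you wish to carry it out: the radius of analyticity of $\lambda(\zeta)$ (and the region on which $\nabla^2\log\lambda$ is uniformly positive definite) may be strictly smaller than the $\delta_0$ of Lemma \ref{LmTaylorZero}, so you should be prepared to shrink $\delta_0$ when separating the regimes $|z|\lessgtr 2c_1\delta_0 N$; and the $g$-function (Beurling--Selberg style) argument used in \eqref{ACTau} produces anticoncentration for cubes of some size $\rho$ depending on the Fourier cutoff $\delta_0'$, so the unit cube must be covered by $O(\rho^{-d})$ such cubes with the implied constant absorbed into $C_3$.
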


\begin{proof} There is a constant $R$ such that
$$ \mu(\tau_N\in \cQ)\leq \mu\left( \tau_N\in \cQ, \;
|\tau_{N/2}|>\frac{|z|}{2}-R\right)+
\mu\left( \tau_N\in \cQ, \; |\tau_N-\tau_{N/2}|>\frac{|z|}{2}-R\right). $$
We will estimate the first term, the estimate of the second 
is obtained by replacing $\sigma$ by $\sigma^{-1}.$
We have
$\DS \mu\left( \tau_N\in \cQ,\; |\tau_{N/2}|>\frac{|z|}{2}-R\right)\leq
\sum_{\cC', \cC''} \mu(\cC' \cC'') ,$
where the sum is over all pairs of cylinders $(\cC', \cC'')$ such that 

(i) length$(\cC')=$ length$(\cC'')=N/2$,

(ii) there exists $\omega'\in \cC'$ such that $|\tau_{N/2}(\omega')|>\frac{|z|}{2}-R,$

(iii) there exists $\omega''\in \cC''$ such that 
$\left|\tau_{N/2}(\omega')+\tau_{N/2}(\omega'')-z\right|<2R.$

By Gibbs property
$\DS \sum_{\cC', \cC''} \mu(\cC' \cC'') \leq K \sum_{\cC', \cC''} \mu(\cC') \mu(\cC'') . $

By \eqref{ACTau} for each $\cC'$ the sum of $\mu(\cC'')$ over the cylinders $\cC''$ satisfying (iii) is smaller than
$\DS \frac{(2R)^{d} C_2}{N^{d/2}}. $ Summing over $\cC'$ satisfying (ii) and using \eqref{LDTau}, we
obtain the result.
\end{proof}

\begin{lemma}
\label{LmACLDs}
Let $\cQ_1,\dots \cQ_s$ be unit cubes centered at $z_1,\dots z_s.$ Then 
with the notation $z_0 = 0 \in \reals^d$, $n_0 = 0$,
$$ \mu\left(\tau_{n_j}\in \cQ_j \text{ for } j=1, \dots s\right)\leq
\prod_{j=1}^s \left[ \left( \frac{C_4}{(n_j-n_{j-1})^{d/2}} \right) e^{-c_4 \frac{|z_j-z_{j-1}|^2}{n_j-n_{j-1}}}\right].
$$
\end{lemma}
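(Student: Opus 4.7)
The plan is to extend the proof of Lemma \ref{LmACLD2} from two time windows to $s$ time windows by decomposing the base space along cylinders adapted to the partition $0 = n_0 < n_1 < \dots < n_s$ and using the quasi-independence property of the Gibbs measure across disjoint time intervals. The final bound then reduces to a product of single-window anticoncentration bounds, each of which is supplied by Lemma \ref{LmACLD2}.

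First I would record a H\"older regularity fact: since $\tau$ is H\"older, there is a constant $R_0$ (independent of $n$) such that for any two points $\omega, \omega'$ in a common cylinder of length at least $n$, $|\tau_n(\omega) - \tau_n(\omega')| \leq R_0$, obtained by summing the geometric series governed by the H\"older exponent. Consequently, whether the event $\bigcap_j \{\tau_{n_j} \in \cQ_j\}$ meets a given length-$n_s$ cylinder is determined, up to a uniform tolerance, by the cylinder alone. I write each such cylinder as a concatenation $\cC = \cC_1 \cdots \cC_s$, where $\cC_j$ specifies the coordinates in $[n_{j-1}, n_j)$, and I call $\cC_j$ \emph{admissible} if some representative of $\sigma^{n_{j-1}} \cC_j$ satisfies $\tau_{n_j - n_{j-1}} \in z_j - z_{j-1} + [-R_1, R_1]^d$ for a suitable $R_1 = 2R_0 + \sqrt{d}$. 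Only concatenations in which every block $\cC_j$ is admissible can meet the event of interest.

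Next, by the standard quasi-independence inequality for Gibbs measures (see e.g.\ \cite{PP90}), there is a constant $K$ such that, for any choice of cylinders $\cC_1, \dots, \cC_s$ of lengths $n_j - n_{j-1}$ respectively,
\[
\mu\bigl( \cC_1 \cap \sigma^{-n_1} \cC_2 \cap \cdots \cap \sigma^{-n_{s-1}} \cC_s \bigr) \leq K^{s-1} \prod_{j=1}^s \mu(\cC_j).
\]
Summing over admissible tuples and using shift-invariance of $\mu$ gives
\[
\mu\bigl( \tau_{n_j} \in \cQ_j, \ j = 1, \dots, s \bigr) \leq K^{s-1} \prod_{j=1}^s \mu\bigl( \tau_{n_j - n_{j-1}} \in \cQ_j' \bigr),
\]
where $\cQ_j'$ is a cube of side length $2 R_1$ centered at $z_j - z_{j-1}$. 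Covering each $\cQ_j'$ by a bounded number of unit cubes and applying Lemma \ref{LmACLD2} to each of them, the factor for index $j$ is bounded by $C (n_j - n_{j-1})^{-d/2} \exp(-c |z_j - z_{j-1}|^2 /(n_j - n_{j-1}))$ (the centers of the covering cubes are all within $R_1$ of $z_j - z_{j-1}$, which only affects the Gaussian exponent by a harmless additive constant that can be absorbed). Multiplying and absorbing $K^{s-1}$ into the constant $C_4^s$ in the product yields the claimed bound.

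The only genuinely delicate step is the quasi-independence inequality used above; everything else is bookkeeping. This inequality is standard for Gibbs measures with H\"older potentials on subshifts of finite type and can be verified either directly from the Gibbs characterization $\mu(\cC) \asymp e^{-n P + S_n \phi}$ combined with the bounded distortion property, or via the spectral gap of the transfer operator, so I would simply cite \cite{PP90}.
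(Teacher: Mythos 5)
Your proof is correct and follows essentially the same route as the paper: decompose the event along a concatenation of $s$ cylinder blocks of lengths $n_j - n_{j-1}$, invoke the Gibbs quasi-independence inequality to factorize the measure, and then apply Lemma~\ref{LmACLD2} to each factor (after covering the size-$R$ cube by unit cubes). You have merely made explicit some details that the paper leaves implicit (the H\"older bounded-variation fact justifying condition (ii), the shift-invariance used in passing from the sum over blocks to $\mu(\tau_{n_j-n_{j-1}}\in\cQ_j')$, and the covering/absorption-of-constant step).
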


\begin{proof}
The LHS can be bounded by 
$\DS  \sum(\mu(\cC_1\cC_2\dots \cC_s)) $
where the sum is over all tuples of cylinders such that

(i) length$(\cC_j)=n_j-n_{j-1}$ and

(ii) On $\cC_j$, $\tau_{n_j-n_{j-1}}$ is contained in a cube of size $R$ centered at
$z_j-z_{j-1}.$ 

Using Gibbs property the last can be bounded by
$\DS K \prod_{j=1}^s \left[ \sum_{\cC_j: (i)\text{ and } (ii) \text{ hold}} \mu(\cC_j)\right] . $
Now the result follows by Lemma \ref{LmACLD2}.
\end{proof}

\end{document}